\pgfplotsset{compat=newest} 
\newcommand{\R}{\mathbb R}
\newcommand{\RR}{\mathbb R}
\newcommand{\bH}{\mathbf H}
\newcommand{\bI}{\mathbf I}
\newcommand{\bP}{\mathbf P}
\newcommand{\bg}{\mathbf g}
\newcommand{\bn}{\mathbf n}
\newcommand{\bp}{\mathbf p}
\newcommand{\bu}{\mathbf u}
\newcommand{\bv}{\mathbf v}
\newcommand{\bx}{\mathbf x}
\newcommand{\bz}{\mathbf z}
\newcommand{\blf}{\mathbf f}
\newcommand{\bL}{\mathbf L}
\newcommand{\divG}{{\mathop{\,\rm div}}_{\Gamma}}
\newcommand{\gradG}{\nabla_{\Gamma}}
\newcommand{\curlG}{{\mathop{\,\rm curl}}_{\Gamma}}
\newcommand{\vcurlG}{{\mathop{\,\rm \mathbf{curl}}}_{\Gamma}}
\newcommand{\cT}{\mathcal T}
\renewcommand{\div}{\textrm{div}\ \!}
\DeclareMathOperator*{\RotG}{\mathbf{curl}_\Gamma \!}
\DeclareMathOperator*{\RotGh}{\mathbf{curl}_{\Gamma_h} \!}
\DeclareMathOperator*{\Div}{div \!}
\DeclareMathOperator*{\DivG}{div_\Gamma \!}
\DeclareMathOperator*{\nablaG}{\nabla_\Gamma \!}
\DeclareMathOperator*{\nablaGh}{\nabla_{\Gamma_h} \!}
\DeclareMathOperator*{\DeltaG}{\Delta_\Gamma \!}
\DeclareMathOperator*{\tr}{tr}
\DeclareMathOperator*{\interior}{int}
\newtheorem{assumption}{Assumption}[section]
\newtheorem{remark}{Remark}[section]
\begin{document}

\title{Finite Element Error Analysis of Surface Stokes Equations in Stream Function Formulation}
\author{
Philip Brandner\thanks{Institut f\"ur Geometrie und Praktische  Mathematik, RWTH-Aachen
University, D-52056 Aachen, Germany (brandner@igpm.rwth-aachen.de).}
\and
Arnold Reusken\thanks{Institut f\"ur Geometrie und Praktische  Mathematik, RWTH-Aachen
University, D-52056 Aachen, Germany (reusken@igpm.rwth-aachen.de).}
}
\maketitle
%
\begin{abstract} We consider a surface Stokes problem in stream function formulation on a simply connected oriented surface $\Gamma \subset \Bbb{R}^3$ without boundary. This formulation leads to a coupled system of two second order scalar surface partial differential equations (for the stream function and an auxiliary variable). To this coupled system a trace finite element discretization method is applied. The main topic of the paper is an error analysis of this discretization method, resulting in optimal order discretization error bounds. The analysis applies to the surface finite element method of Dziuk-Elliott, too.  We also investigate methods for reconstructing velocity and  pressure from the stream function approximation. Results of numerical experiments are included.
\end{abstract}
\begin{keywords} 
 surface Stokes, stream function formulation, error analysis, TraceFEM
 \end{keywords}
\section{Introduction}
In recent years there has been a strongly growing interest in the field of modeling and numerical simulation of surface fluids. Fluidic surfaces or fluidic interfaces are used in, for example, models describing  emulsions, foams or  biological  membranes; cf., e.g., \cite{scriven1960dynamics,slattery2007interfacial,arroyo2009,brenner2013interfacial, rangamani2013interaction,rahimi2013curved}. Typically such models consist of surface {(Navier-)}Stokes equations. These equations 
are also studied as an interesting mathematical problem in its own right in, e.g.,  \cite{ebin1970groups,Temam88,taylor1992analysis,arnol2013mathematical,mitrea2001navier, arnaudon2012lagrangian,Gigaetal}. Recently there has been a strong increase in research on numerical simulation methods for surface (Navier-)Stokes equations, e.g.,  \cite{nitschke2012finite,barrett2014stable,reuther2015interplay,Reusken_Streamfunction,reuther2017solving,fries2017higher,olshanskii2018finite,olshanskii2019penalty,Bonito2019a,OlshanskiiZhiliakov2019,Lederer2019}. By far most of these and other papers on numerical methods for surface (Navier-)Stokes equations these equations are treated in the primitive velocity and pressure variables. In the paper \cite{nitschke2012finite} a finite element discretization of the Navier-Stokes equations on a stationary smooth closed surface in \emph{stream function} formulation is presented. In \cite{Reusken_Streamfunction} a surface Helmholtz decomposition and  well-posedness of this stream function formulation for a class of surface Stokes problems are studied.  We are not aware of any other literature in which surface (Navier-)Stokes equations in stream function formulation are studied.

In Euclidean space, the stream function formulation of (Navier-)Stokes is well-known  and thoroughly studied, e.g., \cite{GR,Quarteroni} and the references therein. In numerical simulations of three-dimensional problems this formulation is not often used due to substantial disadvantages. For two-dimensional problems this formulation reduces to a fourth order biharmonic equation for the \emph{scalar} stream function. This formulation has been used in numerical simulations, although it has certain disadvantages related to boundary conditions and regularity (\cite{GR,Quarteroni}).  In the fields of applications mentioned above, one often deals with smooth simply connected surfaces without boundary. In such a setting there usually  are no difficulties related to regularity or boundary conditions and the stream function formulation may be a very attractive alternative  to the formulation in primitive variables, as already indicated in \cite{nitschke2012finite}.

In both papers \cite{nitschke2012finite,Reusken_Streamfunction} the resulting fourth order scalar surface partial differential equation for the stream function is reformulated as a coupled system of two second order equations, which is a straightforward generalization to surfaces of the classical Ciarlet-Raviart method \cite{Ciarlet1974} in Euclidean space. These equations can be discretized by established finite element methods for the discretization of scalar elliptic surface partial differential equations, such as the surface finite element method \cite{DEreview} (SFEM) or the trace finite element method \cite{olshanskii2016trace} (TraceFEM); cf. also the recent overview paper \cite{Bonito2019}. 

In this paper we present an error analysis of a finite element method for the discretization of the coupled system of second order surface PDEs for the stream function (and an auxiliary variable). For the Euclidean case (i.e., the Ciarlet-Raviart method) an error analysis is presented in the papers \cite{Falk1980,Babuska1980}. In these papers variants of the fundamental abstract Brezzi saddle point theory are developed that are applicable to the mixed saddle point formulation of the biharmonic equation. It turns out that the analyses presented in these classical papers are \emph{not} applicable to the mixed system that results from the stream function formulation of the \emph{surface} Stokes problem. The reason for this is the following. For the  stream function  
$\psi$ of the velocity solution $\bu$ of the surface Stokes equation, i.e. the relation $\bu =\vcurlG \psi$ holds, one obtains the  fourth order surface partial differential equation (in strong formulation)
\begin{equation} \label{eqIntro0}
  -\Delta_\Gamma^2 \psi - \divG(K \nablaG \psi) = \curlG \mathbf{f},
\end{equation}
with $\mathbf{f}$ the force term in the Stokes equation, $\Delta_\Gamma$ the Laplace-Beltrami operator  and $K$ the Gaussian curvature. Precise definitions of the curl operators $\vcurlG$ and  $\curlG$ are given in the next section. Introducing the auxiliary variable $\phi = \Delta_\Gamma \psi$ one obtains a coupled system of second order equations
\begin{equation} \label{eqIntro}
 \begin{split}
  \phi - \Delta_\Gamma \psi & = 0 \\
  -\Delta_\Gamma \phi - \divG(K \nablaG \psi) &= \curlG \mathbf{f}.
 \end{split}
\end{equation}
For the case $K=0$ (Euclidean domain) this problem has a standard saddle point structure, which is a structural property that is essential for the analyses in \cite{Falk1980,Babuska1980}. For the surface case, however, the coupling term $ \divG(K \nablaG \psi)$ destroys this nice saddle point structure. Note that the additional coupling term is second order and in general indefinite (because $K$ does not necessarily have a fixed sign).   It is not a-priori clear whether discretization methods such as the SFEM or TraceFEM, which have optimal order discretization errors for the Laplace-Beltrami equation, are of optimal order when applied to this coupled system. 

In this paper we present an error analysis of the TraceFEM applied to \eqref{eqIntro} and prove that this method has optimal order discretization errors. Our analysis  applies (with minor modifications) to the SFEM, too. We also introduce and analyze methods for reconstructing the velocity and pressure from the resulting finite element stream function approximation $\psi_h \approx \psi$. To avoid many technical details, we do not take  the so-called geometry errors, i.e., errors caused by the  approximation of the surface $\Gamma$, into account. Our analysis is inspired by the technique used in \cite{Falk1980}. We add a new key ingredient that is based on a relation between the fourth order problem \eqref{eqIntro0} for the stream function and the corresponding Stokes equation. This relation yields the result formulated in Corollary~\ref{cor_NotesAR_1}, which turns out to be sufficient to control the coupling term $ \divG(K \nablaG \psi)$ in the error analysis.

The remainder of the paper is organized as follows. In Section~\ref{sectSurfaceStokesStreamfunction} we introduce surface differential operators, recall results for the stream function formulation from \cite{Reusken_Streamfunction} and introduce a well-posed weak formulation of the coupled system \eqref{eqIntro}.  A trace finite element discretization for this coupled system is explained in Section~\ref{sectTraceFEM}. In Section~\ref{Sectdiscreconstr} we introduce finite element methods for the reconstruction of the velocity and pressure unknowns, given a stream function approximation. The main new results of this paper are presented in Section~\ref{SectAnalysisStreamfunction}. In that section an error analysis of the discretization method for the coupled system is derived. In Section~\ref{SectPressureVelocity} we analyze the discretization errors in the reconstruction procedures for velocity and pressure. Finally, in Section~\ref{SectExp} we present results of  numerical experiments that illustrate relevant 
properties of the discretization method. 

\section{Surface Stokes in stream function formulation} \label{sectSurfaceStokesStreamfunction}
\subsection{Preliminaries}
We consider a sufficiently smooth closed simply connected compact surface $\Gamma \subset \Bbb{R}^3$. The  signed distance function and outward pointing unit normal are denoted by $d$ and $\bn$, respectively. We define the closest point projection by $\bp (x)=x - d(x)\bn (x)$ on a sufficiently small neighborhood  of $\Gamma$. The orthogonal projection $\bP(x)=\bI - \bn(x)\bn(x)^T$, $x \in \Gamma$ is used. We introduce the following tangential derivative of a scalar function $\phi \in C^1(\Gamma)$ and of a vector function $\bu \in C^1(\Gamma)^3$ for $x \in \Gamma$:
\begin{align} \label{defgrad} 
\gradG \phi(x)& := \nabla(\phi \circ \bp)(x)= \bP(x)\nabla \phi^e(x),  \\
\gradG \bu(x)& :=\bP(x)\left( \frac{\partial (\bu\circ \bp)(x)}{\partial x_1}~ \frac{\partial( \bu \circ \bp)(x)}{\partial x_2} ~\frac{\partial( \bu \circ \bp)(x)}{\partial x_3}\right)  \nonumber \\ & = \bP(x)\nabla\bu^e(x)\bP(x). \label{covder}
\end{align}
Here, $\phi^e$, $\bu^e$ denote some smooth extension of $\phi$ and $\bu$ on the neighborhood $U$, and $\nabla\bu^e$ is the Jacobian, $(\nabla\bu^e)_{i,j}=\frac{\partial u^e_i}{\partial x_j}$, $1\leq i,j\leq 3$. In the remainder the argument $x \in \Gamma$ is deleted. Tangential divergence operators  are defined by
 \begin{equation} \label{defDivG}
  \DivG \bu:= \tr(\gradG \bu), \quad\DivG A:= \begin{pmatrix} \DivG(e_1^TA) \\ \DivG( e_2^TA) \\ \DivG (e_3^T A) \end{pmatrix}, \quad A\in C^1(\Gamma)^{3 \times 3}, 
 \end{equation}
with $e_i$, $i=1,2,3$ the standard basis vectors in $\Bbb{R}^3$. We define the following surface curl operators:
 \begin{align}
\label{defdivcurl}
  \curlG \bu= \DivG(\bu \times \bn), \quad \bu \in C^1(\Gamma)^3,\\
\label{defdivcurl2}
  \vcurlG \phi:= \bn \times \gradG \phi, \quad \phi \in C^1(\Gamma).
\end{align}

\subsection{Surface Stokes problem in stream function formulation} \label{sectStokescurl}
Motivated by the modeling of surface fluids studied in e.g., \cite{GurtinMurdoch75,barrett2014stable,Gigaetal,jankuhn,Miura}, we introduce for a given given parameter $\alpha \geq 0$ and force vector $\mathbf{f} \in L^2(\Gamma)^3$, with $\mathbf{f}\cdot\bn=0$ the following  surface Stokes problem: Determine a  tangential velocity vector field $\bu:\, \Gamma \to \R^3$, with $\bu\cdot\bn =0$, and the surface fluid pressure $p$ such that
\begin{align} 
 - \bP \DivG (E_s(\bu)) + \alpha \bu +\nabla_\Gamma p &=  \blf \quad \text{on}~\Gamma,  \label{strongform-1} \\
 \DivG \bu & =0 \quad \text{on}~\Gamma. \label{strongform-2}
\end{align}
Here, the surface rate-of-strain tensor $E_s(\bu):= \frac12(\nabla_\Gamma \bu + \nabla_\Gamma \bu^T)$ is used.  The pressure field is defined up to a hydrostatic mode and all tangentially rigid surface fluid motions, i.e. satisfying  $E_s(\bu)=0$, are called \textit{Killing vector fields} \cite{sakai1996riemannian}. For the case $\alpha=0$ one needs the additional consistency condition $\int_\Gamma \blf \cdot \bv\,ds=0$ for all smooth  Killing vector fields $\bv$, which follows   from integration by parts.\\
For the weak formulation of the problem \eqref{strongform-1}-\eqref{strongform-2} we introduce the spaces of \emph{tangential} vector functions
\[ \begin{split}
 \bL_t^2(\Gamma)& :=\{ \, \bu \in L^2(\Gamma)^3~|~ \bn \cdot \bu=0 \quad \text{a.e. on}~\Gamma\,\},\\  \bH_t^1(\Gamma) &:= \{ \, \bu \in H^1(\Gamma)^3~|~ \bn \cdot \bu=0 \quad \text{a.e. on}~\Gamma\,\}, \\  \bH_{t,\div}^1 & := \{\, \bu \in \bH_t^1(\Gamma)~|~\DivG \bu =0 \,\}.
\end{split} \]
The space of Killing vector fields is denoted by
\begin{equation}   \label{defVT}
 E:= \{\, \bu \in \bH_t^1(\Gamma)~|~ E_s(\bu)=0\,\}.
\end{equation}
The space $E$ is a closed subspace of $\bH_t^1(\Gamma)$ and $\mbox{dim}(E)\le 3$ holds. The usual bilinear forms are introduced:
\begin{align}
a_\alpha(\bu,\bv)& := \int_\Gamma E_s(\bu):E_s(\bv) + \alpha \bu \cdot \bv\, ds, \quad \bu,\bv \in \bH_t^1(\Gamma), \label{defblfa} \\
b(\bv,p) &:= - \int_\Gamma p\,\DivG \bv \, ds,  \quad \bv \in \bH_t^1(\Gamma), ~p \in L^2(\Gamma). \label{defblfb}
\end{align}
For the case $\alpha=0$ and $\bu \in E$  we have $a_\alpha(\bu,\bv)=0$ for all $\bv \in \bH_t^1(\Gamma)$  and therefore this kernel space $E$ has to be factored out. For this we introduce the notation $E_0:=E$ and $E_\alpha = \emptyset$ for $\alpha >0$.   
We consider the variational formulation of the surface Stokes problem \eqref{strongform-1}-\eqref{strongform-2}: Determine $(\bu,p) \in  \bH_t^1(\Gamma)/E_\alpha \times L_0^2(\Gamma)$ such that
 \begin{equation} \label{Stokesweak1_1}
 \begin{split}
           a_\alpha(\bu,\bv) +b(\bv,p) &=(\blf,\bv)_{L^2(\Gamma)} \quad \text{for all}~~\bv \in  \bH_t^1(\Gamma)/E_\alpha,  \\
           b(\bu,q) & =0 \qquad \text{for all}~~q \in L^2(\Gamma). 
 \end{split}
 \end{equation}

 From a surface Korn inequality for $a_\alpha(\cdot,\cdot)$, an inf-sup property for $b(\cdot,\cdot)$ (\cite{jankuhn}), and the continuity of the bilinear forms $a_\alpha(\cdot,\cdot)$ and $b(\cdot,\cdot)$ one obtains the \emph{well-posedness} of the problem \eqref{Stokesweak1_1}. Its unique solution is denoted by $\{\bu^\ast,p^\ast\}$. Note that the unique solution $\bu^\ast$ is also the unique solution of the following problem: determine $\bu \in \bH_{t,\Div}^1/E_\alpha$ such that 
\begin{equation} \label{Stokes3}
  a_\alpha(\bu,\bv)=(\blf,\bv)_{L^2(\Gamma)} \quad \text{for all}~~\bv \in \bH_{t,\Div}^1/E_\alpha.
\end{equation}
The stream function formulation of the  surface Stokes problem is based on a surface Helmholtz decomposition and  the identity given in the next lemma.
\begin{lemma} \label{lemma1}
The following relation holds for all $\phi,\psi \in H^2(\Gamma)$:
\begin{equation} \label{identa}
 \begin{split} a_\alpha (\vcurlG \phi,\vcurlG \psi)& = \int_{\Gamma} E_s(\vcurlG \phi):E_s(\vcurlG \psi) +\alpha\vcurlG \phi \cdot \vcurlG \psi \, ds  \\ & =
 \int_\Gamma \frac12 \DeltaG \phi \DeltaG \psi + (\alpha- K)\gradG \phi \cdot \gradG \psi \, ds \\ & =:\tilde a_\alpha (\phi,\psi).
 \end{split}
 \end{equation}
 Here, $K$ denotes the Gaussian curvature of the surface $\Gamma$.
\end{lemma}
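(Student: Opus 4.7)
The plan is to split the bilinear form into the zeroth-order $\alpha$-piece and the strain piece, and treat each separately. For the $\alpha$-piece I would argue pointwise: since $\nabla_\Gamma \phi$ and $\nabla_\Gamma \psi$ are tangential and $|\bn|=1$, the vector identity
\[
\vcurlG \phi \cdot \vcurlG \psi = (\bn \times \nabla_\Gamma \phi)\cdot(\bn \times \nabla_\Gamma \psi)=\nabla_\Gamma \phi \cdot \nabla_\Gamma \psi
\]
holds, which accounts for the $\alpha\, \nabla_\Gamma \phi \cdot \nabla_\Gamma \psi$ term on the right-hand side of \eqref{identa}.

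For the strain piece I would first establish two differential identities for the ``vector curl of a scalar'': $\DivG(\vcurlG \phi)=0$ and $\curlG(\vcurlG \phi)=\Delta_\Gamma \phi$. The first is the surface analog of $\divergence\,\Rot=0$ and drops out by direct computation from \eqref{defdivcurl2} and \eqref{defDivG}. The second follows from the definition $\curlG \bu=\DivG(\bu\times\bn)$ in \eqref{defdivcurl} together with the pointwise identity $(\bn \times \nabla_\Gamma \phi)\times\bn=\nabla_\Gamma \phi$, which reduces the claim to the trivial $\DivG(\nabla_\Gamma \phi)=\Delta_\Gamma \phi$. I would then invoke the Korn-type identity for tangential vector fields $\bu,\bv$ on the closed surface $\Gamma$:
\[
2\int_\Gamma E_s(\bu):E_s(\bv)\, ds =\int_\Gamma \DivG \bu\, \DivG \bv\, ds+\int_\Gamma \curlG \bu\, \curlG \bv\, ds-2\int_\Gamma K\,\bu\cdot \bv\, ds,
\]
which is derived via surface integration by parts in \cite{jankuhn,Reusken_Streamfunction}. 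Specialising to $\bu=\vcurlG \phi$, $\bv=\vcurlG \psi$, the divergence terms vanish, the curl terms contribute $\int_\Gamma \Delta_\Gamma \phi\,\Delta_\Gamma \psi\, ds$, and the curvature term contributes $-2\int_\Gamma K\,\nabla_\Gamma \phi\cdot\nabla_\Gamma \psi\, ds$ by the pointwise computation of the first paragraph. Dividing by $2$ and combining with the $\alpha$-contribution then yields \eqref{identa}.

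The main technical obstacle is the Korn-type identity above; everything else is algebra and routine surface calculus. A self-contained derivation would decompose $2E_s(\bu):E_s(\bv)=\nabla_\Gamma \bu:\nabla_\Gamma \bv+\nabla_\Gamma \bu:(\nabla_\Gamma \bv)^T$ and integrate the mixed term by parts twice on the closed surface; the non-commutativity of tangential differentiation produces commutator terms involving the Weingarten map $\nabla_\Gamma \bn$, whose trace gives twice the mean curvature and whose determinant gives the Gaussian curvature $K$. It is precisely this step that is delicate, and it is the reason I would rely on the cited references rather than re-derive the identity inside the proof of the lemma.
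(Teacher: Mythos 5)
The paper does not prove this lemma in-house: it simply cites Lemma~5.3 of \cite{Reusken_Streamfunction} for the case $\alpha=0$ and remarks that the extension to $\alpha>0$ is easy. Your sketch is in effect a reconstruction of the proof of that cited lemma, and your route is the same one: the $\alpha$-term is handled by the pointwise identity $(\bn\times\gradG\phi)\cdot(\bn\times\gradG\psi)=\gradG\phi\cdot\gradG\psi$ (this \emph{is} the ``easy extension'' the paper alludes to), the strain term is reduced via $\DivG(\vcurlG\phi)=0$ and $\curlG(\vcurlG\phi)=\DeltaG\phi$, and the curvature contribution comes from the integrated Korn/Bochner-type identity on the closed surface. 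So the substance is correct and matches the source the paper relies on; what you add is that the argument is actually displayed rather than outsourced.

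One caveat worth fixing: the general identity you quote,
\begin{equation*}
2\int_\Gamma E_s(\bu):E_s(\bv)\,ds=\int_\Gamma \DivG\bu\,\DivG\bv\,ds+\int_\Gamma \curlG\bu\,\curlG\bv\,ds-2\int_\Gamma K\,\bu\cdot\bv\,ds,
\end{equation*}
has the wrong coefficient on the divergence term. Integrating the mixed term $\nablaG\bu:(\nablaG\bv)^T$ by parts twice on a closed surface and commuting covariant derivatives gives $\int_\Gamma \DivG\bu\,\DivG\bv - K\,\bu\cdot\bv\,ds$, and combining this with the Bochner identity for $\int_\Gamma\nablaG\bu:\nablaG\bv\,ds$ yields a factor $2\DivG\bu\,\DivG\bv$ on the right-hand side, not $\DivG\bu\,\DivG\bv$. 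Since you apply the identity only to the divergence-free fields $\vcurlG\phi$, $\vcurlG\psi$, this slip does not affect the conclusion, but as stated for general tangential fields the identity is false; either insert the factor $2$ or state the identity only for divergence-free arguments, as is done in \cite{Reusken_Streamfunction}.
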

\begin{proof}
For $\alpha=0$ a proof is given in Lemma 5.3. of \cite{Reusken_Streamfunction}. This proof can easily be extended  to the case $\alpha>0$.
\end{proof}
\ \\[1ex] 
The following spaces are used in the stream function formulation:
\begin{align*}
 H_\ast^k(\Gamma)& :=\{\, \psi \in H^k(\Gamma)~|~\int_\Gamma \psi \, ds =0\,\}, \quad \tilde E := \{\, \psi \in H_\ast^2(\Gamma)~|~\tilde a_\alpha(\psi,\psi)=0\,\}.
\end{align*}
Analogous to $E_\alpha$ we define $\tilde E_0:=\tilde E$, $\tilde E_\alpha := \emptyset$ for $\alpha >0$.
For the  stream function formulation the result in the following lemma is essential. For this result to hold, \emph{the assumption that $\Gamma$ is simply connected is necessary}.
\begin{lemma} \label{lemma2}
Assume that $\Gamma$ is simply connected. The following holds:
 \begin{align*}
  \vcurlG &:\, H_\ast^2(\Gamma) \to \bH_{t,\Div}^1 \quad \text{is an homeomorphism,} \\ 
  \vcurlG &:\,\tilde E  \to E \quad \text{is an homeomorphism}. 
 \end{align*}
\end{lemma}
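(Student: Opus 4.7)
\medskip
\noindent\textbf{Proof plan.} The plan is to verify mapping, injectivity, and surjectivity for the first statement, obtain continuity of the inverse from the bounded inverse theorem, and then derive the second homeomorphism by transferring it through the identity in Lemma~\ref{lemma1}.

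First I would check that $\vcurlG$ actually maps $H_\ast^2(\Gamma)$ into $\bH_{t,\Div}^1$ boundedly. By definition $\vcurlG \psi = \bn \times \gradG \psi$ is tangential, and a direct computation using \eqref{defDivG}--\eqref{defdivcurl2} (together with the identity $\DivG(\bv \times \bn)=0$ for $\bv = \gradG\psi$, which follows from $\gradG\psi$ being a surface gradient and $\bn$ being normal) yields $\DivG(\vcurlG \psi) = 0$. The estimate $\|\vcurlG \psi\|_{H^1(\Gamma)} \le C\|\psi\|_{H^2(\Gamma)}$ is immediate from the smoothness of $\bn$ and $\bP$. Injectivity is clear: if $\vcurlG \psi = \bn\times\gradG \psi = 0$, then, since $\gradG \psi$ is tangential, $\gradG \psi = 0$, so $\psi$ is constant on $\Gamma$, and the mean-zero constraint in $H_\ast^2(\Gamma)$ forces $\psi = 0$.

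The main obstacle is surjectivity, and it is exactly here that the simply-connectedness of $\Gamma$ enters. Given $\bu \in \bH_{t,\Div}^1$, I would construct a candidate stream function by solving the scalar Laplace--Beltrami problem
\begin{equation*}
 -\DeltaG \psi = \curlG \bu \quad \text{on } \Gamma, \qquad \int_\Gamma \psi\, ds = 0,
\end{equation*}
which is well-posed in $H_\ast^2(\Gamma)$ (the right-hand side has vanishing surface mean by integration by parts, and elliptic regularity gives the $H^2$ bound). The vector field $\bw:=\vcurlG\psi - \bu$ then lies in $\bH_{t,\Div}^1$ and satisfies $\curlG\bw = 0$ as well. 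On a closed simply connected surface the only tangential vector field that is simultaneously divergence-free and curl-free is zero (this is the triviality of $H^1_{\rm dR}(\Gamma)$ together with the Hodge decomposition, which is precisely the surface Helmholtz-decomposition result used in \cite{Reusken_Streamfunction}). Hence $\bw=0$ and $\vcurlG \psi = \bu$. Combined with the previous step this gives a continuous bijection between the Banach spaces $H_\ast^2(\Gamma)$ and $\bH_{t,\Div}^1$, and the bounded inverse theorem furnishes continuity of the inverse.

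For the second homeomorphism I would use Lemma~\ref{lemma1} as the transfer identity. If $\psi \in \tilde E$, then by that lemma $a_\alpha(\vcurlG\psi,\vcurlG\psi) = \tilde a_\alpha(\psi,\psi)=0$; for $\alpha=0$ this means $\|E_s(\vcurlG\psi)\|_{L^2}^2=0$, so $\vcurlG\psi \in E$ (and $\tilde E = \emptyset$ for $\alpha>0$ matches $E_\alpha=\emptyset$). Conversely, any $\bu \in E$ is tangential with $\DivG\bu = \tr E_s(\bu) = 0$, so it lies in $\bH_{t,\Div}^1$; the first homeomorphism yields a unique $\psi \in H_\ast^2(\Gamma)$ with $\vcurlG\psi = \bu$, and then $\tilde a_\alpha(\psi,\psi) = a_\alpha(\bu,\bu) = 0$, i.e.\ $\psi \in \tilde E$. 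Thus $\vcurlG$ restricts to a bijection between the closed subspaces $\tilde E$ and $E$, and continuity in both directions is inherited from the first statement.
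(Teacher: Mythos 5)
The paper does not prove this lemma itself; it only cites Lemma 5.4 of \cite{Reusken_Streamfunction}, so your self-contained argument necessarily takes a fuller route than the text. It is the right one: boundedness and tangentiality of $\vcurlG$, injectivity from the mean-zero normalization, surjectivity by solving an auxiliary Laplace--Beltrami problem and then invoking the triviality of harmonic (simultaneously divergence- and curl-free tangential) fields on a simply connected closed surface, the bounded inverse theorem, and the transfer to the Killing spaces via Lemma~\ref{lemma1} --- this is precisely the surface Helmholtz-decomposition mechanism behind the cited result. Two small points. First, a sign: with the paper's conventions $\curlG \bu = \DivG (\bu \times \bn)$ and $\vcurlG \psi = \bn \times \gradG \psi$ one computes $\curlG (\vcurlG \psi) = \DivG \big( (\bn \times \gradG \psi) \times \bn \big) = \DivG (\gradG \psi) = \DeltaG \psi$, so the auxiliary problem should read $\DeltaG \psi = \curlG \bu$ without the minus sign; as literally written, your remainder $\bw = \vcurlG \psi - \bu$ satisfies $\curlG \bw = -2\curlG \bu$ rather than $0$, so the harmonic-field argument would not apply until the sign is fixed. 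Second, your parenthetical about $\alpha$ is the right instinct but worth making explicit: since $\tilde E$ is defined through $\tilde a_\alpha$, for $\alpha>0$ it collapses to the trivial space while $E$ (defined without reference to $\alpha$) can be nontrivial, so the second homeomorphism is only meaningful for $\alpha=0$ --- which is exactly the case your argument treats.
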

\begin{proof}
A proof is given in Lemma 5.4. of \cite{Reusken_Streamfunction}.
\end{proof}
\ \\[1ex]
The following theorem introduces the stream function formulation of the  surface Stokes problem. 
\begin{theorem} \label{thmstream}
 Let $\bu^\ast \in \bH_{t,\Div}^1/E_\alpha$ be the unique solution of \eqref{Stokesweak1_1} {\rm(}or \eqref{Stokes3}{\rm)} and $\psi^\ast \in H_\ast^1(\Gamma)$  the unique stream function such that $\bu^\ast= \vcurlG \psi^\ast$. This $\psi^\ast$ is the unique solution of the following problem: determine $\psi \in H_\ast^2(\Gamma)/\tilde E_\alpha$ such that
 \begin{equation} \label{weakstream}
  \tilde a_\alpha (\psi,\phi)= (\blf, \vcurlG \phi)_{L^2(\Gamma)} \quad \text{for all}~~\phi \in H_\ast^2(\Gamma)/\tilde E_\alpha.
 \end{equation}
 Furthermore,  the regularity estimate
 \begin{equation} \label{regestimate}
  \|\psi^\ast\|_{H^3(\Gamma)} \leq c \|\blf\|_{L^2(\Gamma)}
 \end{equation}
holds, with a constant $c$ independent of $\blf \in \bL_t^2(\Gamma)$.
\end{theorem}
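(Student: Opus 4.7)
\medskip
\noindent\textbf{Proof plan for Theorem~\ref{thmstream}.}

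The plan is to transport the well-posed weak Stokes formulation \eqref{Stokes3} through the curl isomorphism of Lemma~\ref{lemma2} and then apply the reduction identity of Lemma~\ref{lemma1}. First, since $\bu^\ast \in \bH_{t,\Div}^1/E_\alpha$, Lemma~\ref{lemma2} yields a unique $\psi^\ast \in H_\ast^2(\Gamma)/\tilde E_\alpha$ with $\bu^\ast = \vcurlG \psi^\ast$ (the statement $\psi^\ast \in H^1_\ast$ in the theorem becomes automatically stronger through this identification). For any test function $\phi \in H_\ast^2(\Gamma)/\tilde E_\alpha$, Lemma~\ref{lemma2} again guarantees $\bv := \vcurlG \phi \in \bH_{t,\Div}^1/E_\alpha$. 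Inserting $\bv$ in \eqref{Stokes3} and applying Lemma~\ref{lemma1} to the left-hand side gives
\begin{equation*}
\tilde a_\alpha(\psi^\ast,\phi) \;=\; a_\alpha(\vcurlG \psi^\ast,\vcurlG \phi) \;=\; a_\alpha(\bu^\ast,\bv) \;=\; (\blf,\vcurlG\phi)_{L^2(\Gamma)},
\end{equation*}
which is precisely \eqref{weakstream}.

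For uniqueness, suppose $\psi \in H_\ast^2(\Gamma)/\tilde E_\alpha$ solves \eqref{weakstream} with $\blf = 0$. Then $\tilde a_\alpha(\psi,\phi)=0$ for every admissible $\phi$. Setting $\bu := \vcurlG \psi \in \bH_{t,\Div}^1/E_\alpha$ and using Lemma~\ref{lemma1} in reverse, one sees that $\bu$ satisfies $a_\alpha(\bu,\vcurlG\phi)=0$ for all $\phi \in H_\ast^2/\tilde E_\alpha$. Since $\vcurlG$ maps $H_\ast^2/\tilde E_\alpha$ onto $\bH_{t,\Div}^1/E_\alpha$ by Lemma~\ref{lemma2}, this is equivalent to \eqref{Stokes3} with $\blf = 0$, whose unique solution is $\bu = 0$. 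The homeomorphism property then forces $\psi = 0$ in $H_\ast^2/\tilde E_\alpha$.

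For the regularity bound I would invoke the $H^2$-regularity result for the surface Stokes problem on a smooth, closed, simply connected $\Gamma$: namely
\begin{equation*}
\|\bu^\ast\|_{H^2(\Gamma)} + \|p^\ast\|_{H^1(\Gamma)} \;\leq\; c\,\|\blf\|_{L^2(\Gamma)},
\end{equation*}
which is by now a standard shift result on smooth surfaces (obtained e.g.\ by localizing in charts and applying classical planar Stokes regularity together with the smoothness of $\bn,\bP$, see \cite{jankuhn} and \cite{Reusken_Streamfunction}). Combining this with the identity $\vcurlG \psi^\ast = \bn \times \nabla_\Gamma \psi^\ast = \bu^\ast$ and the normalization $\int_\Gamma \psi^\ast \,ds = 0$, one recovers $\nabla_\Gamma \psi^\ast$ pointwise as $\bu^\ast \times \bn$ (up to sign) and thus controls all first-order tangential derivatives of $\psi^\ast$ in $H^2(\Gamma)$ by $\|\bu^\ast\|_{H^2(\Gamma)}$. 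Since $\int_\Gamma \psi^\ast \, ds = 0$, a Poincar\'e inequality upgrades this to $\|\psi^\ast\|_{H^3(\Gamma)} \leq c \|\bu^\ast\|_{H^2(\Gamma)} \leq c\|\blf\|_{L^2(\Gamma)}$.

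The genuinely nontrivial step is the $H^2$-regularity of the surface Stokes system; existence, uniqueness and the reduction to \eqref{weakstream} are essentially bookkeeping given Lemmas~\ref{lemma1} and~\ref{lemma2}. I would therefore quote the regularity estimate from \cite{jankuhn,Reusken_Streamfunction} rather than reprove it, and spend the body of the proof verifying that the two-sided use of the curl homeomorphism is indeed legitimate on the quotient spaces $/E_\alpha$ and $/\tilde E_\alpha$ (this is where the assumption that $\Gamma$ is simply connected enters critically, through Lemma~\ref{lemma2}).
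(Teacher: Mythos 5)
Your argument is correct and takes essentially the same route as the paper: the paper's own proof of Theorem~\ref{thmstream} is a one-line citation to Theorem 5.5 of \cite{Reusken_Streamfunction}, whose argument uses precisely the ingredients you assemble (the curl homeomorphism of Lemma~\ref{lemma2} to transport \eqref{Stokes3}, the reduction identity of Lemma~\ref{lemma1}, and $H^2$ surface Stokes regularity transferred to $\psi^\ast$ via $\nablaG \psi^\ast = \bu^\ast \times \bn$ and a Poincar\'e inequality, exactly as the paper itself does when deriving \eqref{NormConnection_U_Psi}). No gaps.
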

\begin{proof}
Again for $\alpha=0$ a proof is given in Theorem 5.5. of \cite{Reusken_Streamfunction}. This proof can easily be extended to the case $\alpha>0$.
\end{proof}
\ \\[1ex]
The main topic of this paper is an error analysis of a finite element method for the discretization of the stream function formulation \eqref{weakstream}. We reformulate the fourth order surface problem \eqref{weakstream} as a coupled system of two second order equations, which are then discretized using a specific  finite element method. Before we present this coupled system we collect the
(main) assumptions concerning the stream function formulation.
\begin{assumption} \label{ass} We assume that $\Gamma$ is simply connected and sufficiently smooth, at least $C^3$. We take $\alpha=1$.
\end{assumption}

The assumption that $\Gamma$ is simply connected is essential for the stream function formulation. We do not give precise statements on how the results derived in the error analysis below depend on the smoothness of $\Gamma$. For \eqref{regestimate} to hold we need that $\Gamma$ is at least $C^3$. To avoid technical details related to the Killing vector fields we restrict to the case $\alpha =1$. The results derived in the paper also hold (with minor modifications) for the case $\alpha=0$.

For a reformulation of \eqref{weakstream} as a coupled system we introduce the following \emph{symmetric} bilinear forms:
\begin{align}
m(\xi,\eta) &:= \int_{\Gamma} \xi \eta \, ds, \label{defm} \\
b(\xi,\eta) &:= \int_{\Gamma} \nablaG \xi \cdot  \nablaG \eta \, ds, \label{defb} \\
b_K(\xi,\eta) &:= 2 \int_{\Gamma} (1-K) \nablaG \xi \cdot  \nablaG \eta \, ds, \label{defbK}
\end{align}
and the functional
\begin{align} \label{rhsg}
g(\xi) &:= -2 \int_{\Gamma} \blf \cdot \RotG \xi \, ds.
\end{align}
Note that the notation $b(\cdot,\cdot)$ is already used in the variational formulation of the Stokes problem. We use the notation $b(\cdot,\cdot)$ in \eqref{defb}, because it  is consistent to the notation used in the paper \cite{Falk1980}, from which main ideas  of our error analysis are taken. In the remainder only the bilinear form $b(\cdot,\cdot)$ as defined in \eqref{defb} is used, hence,  confusion  is avoided.

The coupled formulation is as follows: Determine $\psi \in H^1_*(\Gamma)$, $\phi \in H^1(\Gamma)$ such that
\begin{equation}
\label{NotesAR_8}
\begin{split}
m(\phi,\eta) + b(\psi, \eta) &= 0 \quad \text{for all} ~ \eta \in H^1(\Gamma),\\
b(\phi,\xi) - b_K(\psi, \xi) &= g(\xi) \quad \text{for all} ~ \xi \in H^1(\Gamma).
\end{split}
\end{equation}
\begin{lemma} \label{lemreformulation}
The problem \eqref{NotesAR_8} has a unique solution  $\psi=\psi^\ast,~ \phi=\phi^\ast = \DeltaG \psi^\ast$, with $\psi^\ast$ the unique solution of \eqref{weakstream}. Furthermore, we have the regularity estimates
\begin{align}
\label{NotesAR_9}
\| \psi^\ast \|_{H^3(\Gamma)} \leq c \|\blf\|_{L^2(\Gamma)}, \quad \| \phi^\ast \|_{H^1(\Gamma)} \leq c \|\blf\|_{L^2(\Gamma)}.
\end{align}
\end{lemma}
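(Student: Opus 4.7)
The plan is to establish an equivalence between solutions of \eqref{NotesAR_8} and solutions of the already well-posed fourth order formulation \eqref{weakstream}, through the identification $\phi = \DeltaG \psi$. This simultaneously yields existence, uniqueness, and the regularity bounds \eqref{NotesAR_9}. Three ingredients enter: integration by parts on the closed surface $\Gamma$ (no boundary terms appear), a density/continuity argument to pass between the test spaces $H^2_\ast(\Gamma)$ and $H^1(\Gamma)$, and elliptic regularity for the Laplace--Beltrami equation to bootstrap $\psi$ from $H^1$ to $H^3$.

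For existence I would take $\psi^\ast$ from Theorem~\ref{thmstream} and set $\phi^\ast := \DeltaG \psi^\ast$. By \eqref{regestimate} we have $\psi^\ast \in H^3(\Gamma)$, hence $\phi^\ast \in H^1(\Gamma)$ with $\|\phi^\ast\|_{H^1(\Gamma)}\le c\|\blf\|_{L^2(\Gamma)}$, which delivers \eqref{NotesAR_9}. The first equation of \eqref{NotesAR_8} reduces to $\int_\Gamma (\DeltaG \psi^\ast)\eta + \nablaG\psi^\ast\cdot\nablaG\eta\,ds=0$ for every $\eta\in H^1(\Gamma)$, which is Green's formula on the closed surface. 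For the second equation I start from \eqref{weakstream} with $\alpha=1$ and $\xi\in H^2_\ast(\Gamma)$, multiply by $-2$, and use the $H^3$ regularity of $\psi^\ast$ to rewrite
\begin{equation*}
-\int_\Gamma \DeltaG\psi^\ast\,\DeltaG\xi\,ds = \int_\Gamma \nablaG(\DeltaG\psi^\ast)\cdot\nablaG\xi\,ds;
\end{equation*}
this yields exactly $b(\phi^\ast,\xi)-b_K(\psi^\ast,\xi)=g(\xi)$. Extension to arbitrary $\xi\in H^1(\Gamma)$ is then a routine matter: write $\xi=\xi_0+c$ with $\xi_0\in H^1_\ast(\Gamma)$ and $c$ constant (constants annihilate each of $b$, $b_K$, and $g$), and exploit the density of $H^2_\ast(\Gamma)$ in $H^1_\ast(\Gamma)$ together with the $H^1$-continuity of all forms involved.

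For uniqueness, let $(\psi,\phi)\in H^1_\ast(\Gamma)\times H^1(\Gamma)$ be any solution of \eqref{NotesAR_8}. Choosing $\eta\equiv 1$ in the first equation gives $\int_\Gamma \phi\,ds=0$, so $\phi\in H^1_\ast(\Gamma)$. The first equation is then the weak formulation of $\DeltaG\psi=\phi$ on the closed surface $\Gamma$, and since $\phi\in H^1(\Gamma)$ and $\Gamma$ is $C^3$, elliptic regularity for the Laplace--Beltrami operator promotes $\psi$ to $H^3(\Gamma)$. With this regularity I can reverse the computation used above: for $\xi\in H^2_\ast(\Gamma)$, integration by parts converts $b(\phi,\xi)$ into $-\int_\Gamma \DeltaG\psi\,\DeltaG\xi\,ds$, and rescaling reveals that $\psi$ satisfies \eqref{weakstream}. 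The uniqueness statement of Theorem~\ref{thmstream} then forces $\psi=\psi^\ast$ and hence $\phi=\DeltaG\psi^\ast=\phi^\ast$, completing the proof.

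The step I expect to require the most care is this elliptic-regularity bootstrap in the uniqueness direction: without first upgrading $\psi$ from $H^1_\ast(\Gamma)$ to $H^3(\Gamma)$, the integration by parts that transforms $b(\phi,\xi)$ into $-\int\DeltaG\psi\,\DeltaG\xi\,ds$ is not justified, and one cannot recognize \eqref{weakstream}. This is precisely where the $C^3$ assumption on $\Gamma$ in Assumption~\ref{ass} is consumed. Everything else is bookkeeping of signs and test spaces; crucially, no delicate estimate involving the possibly indefinite factor $1-K$ is needed here, because the argument reduces the coupled system to the already well-posed problem \eqref{weakstream}.
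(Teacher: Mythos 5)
Your argument is correct: it is the standard Ciarlet--Raviart-type equivalence between the coupled system \eqref{NotesAR_8} and the fourth-order problem \eqref{weakstream}, carried out with the right care about test spaces (density of $H^2_\ast(\Gamma)$ in $H^1_\ast(\Gamma)$, constants annihilating $b$, $b_K$ and $g$) and with the elliptic-regularity bootstrap $\phi\in H^1(\Gamma)\Rightarrow\psi\in H^3(\Gamma)$ that justifies the integration by parts in the uniqueness direction. The paper itself gives no proof here but simply cites Lemma 5.6 of \cite{Reusken_Streamfunction}; what you wrote is precisely the argument that citation delegates, so there is nothing to object to.
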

\begin{proof}
For $\alpha=0$ a proof is given in Lemma 5.6. of \cite{Reusken_Streamfunction}. This proof also applies to the case $\alpha=1$.
\end{proof}
\ \\[1ex]
Let the stream function $\psi^\ast$ be the unique solution of \eqref{NotesAR_8}. We now introduce (obvious) variational formulations that are useful for the reconstruction of the solution $\{\bu^\ast,p^\ast\}$ of the surface Stokes problem, given $\psi^\ast$. First we consider $\bu$.
By definition we have
\begin{align*}
\bu^\ast =\RotG \psi^\ast = \bn \times \nablaG \psi^\ast.
\end{align*}
This immediately leads to the following well-posed variational formulation: Determine $\bu \in L^2(\Gamma)^3$ such that
\begin{align}
\label{Variationsformulierung_u}
\int_{\Gamma} \bu \cdot \bv \text{ } ds = \int_{\Gamma} \left( \bn \times \nablaG \psi^\ast \right) \cdot \bv \text{ } ds =: \int_{\Gamma} \bg \cdot \bv \text{ } ds \quad \text{for all}~\bv \in L^2(\Gamma)^3,
\end{align}
which has the unique solution $\bu^\ast$. Below, in the discretization method we use a finite element discretization of \eqref{Variationsformulierung_u}. Note that from regularity theory for the Stokes problem we know that $\bu^\ast$ has more regularity than only $\bu^\ast \in L^2(\Gamma)^3$. A regularity relation  between the velocity solution $\bu^\ast$ and its corresponding stream function $\psi^\ast$ can be derived. Assume $\bu^\ast \in H^{r}_*(\Gamma)$ holds. Then the inequalities
\begin{align}
\label{NormConnection_U_Psi}
c_1 \| \psi^\ast \|_{H^{r+1}(\Gamma)} \leq \| \bu^\ast \|_{H^r(\Gamma)} \leq c_2 \| \psi^\ast \|_{H^{r+1}(\Gamma)} 
\end{align}
hold, with constants $c_1, \, c_2 >0$ that depend only on $\Gamma$.  This can be derived as follows. For the first estimate we use a Poincare inequality $\|\psi\|_{H^{r+1}(\Gamma)} \leq c \| \nabla_\Gamma\psi \|_{H^r(\Gamma)}$ for all $\psi \in H^{r+1}_\ast(\Gamma)$ and  the identity $\nablaG \psi^\ast = \bu^\ast \times \bn$. The second estimate follows directly from the identity $\bu^\ast = \bn \times \nablaG \psi^\ast$.

For the derivation of a pressure reconstruction  we take the first equation in \eqref{Stokesweak1_1} and insert a test function $\bv=\nabla_\Gamma \xi$, $\xi \in H^2(\Gamma)$.  Using the identity (cf. \cite{Reusken_Streamfunction})
\[
  \bP \divG ( E_s(\bu)) = \frac12\vcurlG (\curlG \bu) +  K \bu
\]
for any $\bu$ that satisfies $  \divG \bu =0$, we obtain for the solution $\{\bu^\ast, p^\ast\}$ of   \eqref{Stokesweak1_1}:
\begin{align*}
  (\blf,\gradG \xi)_{L^2(\Gamma)} =&    \int_{\Gamma} E_s(\bu^\ast):E_s(\gradG \xi) + \bu^\ast \cdot\gradG \xi +\gradG p^\ast \cdot \gradG \xi \, ds  \\  
   &= \int_\Gamma - \bP \divG \big( E_s(\bu^\ast) \big) \cdot \gradG \xi + \gradG p^\ast \cdot \gradG \xi\, ds \\ 
   & =  \int_\Gamma - \frac12 \vcurlG(\curlG \bu^\ast)\cdot \gradG \xi - K \bu^\ast  \cdot \gradG \xi + \gradG p^\ast \cdot \gradG \xi \, ds \\
   & = \int_\Gamma  - K \bu^\ast  \cdot \gradG \xi + \gradG p^\ast \cdot \gradG \xi \, ds,
\end{align*}
where in the last equality we used partial integration and $\divG(\vcurlG(\cdot))=0$. 
Thus with $\bu^\ast = \RotG \psi^\ast$ we obtain that $p^\ast$ is the unique solution of the following well-posed Laplace-Beltrami problem: Determine $p \in H^1_\ast(\Gamma)$ such that
\begin{equation}
\label{Variationsformulierung_p} \begin{split}
\int_\Gamma \nabla_\Gamma p \cdot \nabla_\Gamma \xi \text{ } ds  & =  \int_\Gamma \left(  K \RotG \psi^\ast + \blf  \right) \cdot \nabla_\Gamma \xi \text{ } ds  \\ & =: \int_\Gamma \bz \cdot \nabla_\Gamma \xi \text{ } ds \quad \text{for all}~ \xi \in H^1(\Gamma).
\end{split}
\end{equation}
Below, for the pressure reconstruction we will apply a  finite element discretization method to this Laplace-Beltrami problem.
\section{Trace finite element method for discretization of the coupled problem} \label{sectTraceFEM}
For the discretization of \eqref{NotesAR_8} we propose a  trace finite element method (TraceFEM). Alternatively, the surface finite element of Dziuk-Elliott \cite{DEreview} can be used, cf. Remark~\ref{RemDE}.

We outline the key ingredients of the (higher order) TraceFEM. More detailed explanations are given in \cite{lehrenfeld2016high,GrandeLehrenfeldReusken}. 

Let $\Omega \subset \RR^3$ be a polygonal domain that strictly contains the manifold $\Gamma$.  We need a sufficiently accurate approximation $\Gamma_h$ of $\Gamma$. For the definition of the method it is sufficient that $\Gamma_h$ is a Lipschitz surface without boundary. Possible constructions are briefly addressed in Remark~\ref{RemGammah}. We choose a shape regular family of tetrahedral triangulations 
$\{\cT_h\}_{h >0}$ of $\Omega$ and  introduce the active mesh
\begin{equation*}
\cT_h^\Gamma:=\{T \in \cT_h \,:\, {\rm meas}_2(T \cap \Gamma_h)>0  \},
\end{equation*}
consisting of the subset of all tetrahedra that have a nonzero intersection with $\Gamma_h$. The domain $\Omega^\Gamma_h:=\interior \left( \overline{ \cup_{T \in \cT_h^\Gamma} T } \right)$ is formed by the triangulation $\cT_h^\Gamma$. We introduce the standard (local) finite element space $V_{h,k}$ 
\begin{align*}
V_{h,k}:=  \{ v_h \in \mathcal{C}(\Omega^\Gamma_h) \,:\, {v_h}_{|T} \in \mathit{P}_k \quad \forall \, T \in \cT_h^\Gamma \}.
\end{align*}

\begin{remark} \label{RemGammah} \rm We assume that $\Gamma$ is (implicitly) represented as the zero level of a smooth level set function $\Phi$.
 We denote with $I^1_h$ the  nodal interpolation operator on $\cT_h$, which maps into the space of continuous piecewise linears on $\cT_h$.   A piecewise planar approximation $\Gamma_h$ of $\Gamma$ is given by 
\begin{equation} \label{eqG}
\Gamma_h:= \{ x \in \Omega : (I^1_h \Phi)(x)=0 \}.
\end{equation}
This $\Gamma_h$, which is easy to construct if $\Phi$ is available, has second order accuracy, i.e. ${\rm dist}(\Gamma_h,\Gamma) \lesssim h^2$. Here and  in the rest of the paper we use the notation $\lesssim$ to denote an inequality with a (hidden) constant that is independent of $h$ and of the position of $\Gamma$ in the mesh $\cT_h^\Gamma$. If a more accurate geometry approximation is required, one can replace the operator  $I^1_h$ by the nodal interpolation operator $I^q_h$, which  maps into the space of continuous piecewise polynomials of degree $q \geq 2$ on $\cT_h$. In that case, however, the zero level of $I^q_h$ is not easy to determine. It is better to use a variant of this approach in which, based on a sufficiently accurate approximation $\Phi_h$ of $\Phi$, a parametric mapping $\Theta_h^q \in (V_{h,q})^3$ is constructed, which deforms the local triangulation $\cT_h^\Gamma$ in such a way that $\Gamma_h^q:=\Theta_h^q(\Gamma_h)$ (with $\Gamma_h$ as in  \eqref{eqG}) has accuracy ${\rm dist}(\Gamma_
h^q,\Gamma) \lesssim h^{q+1}$. This parametric mapping induces corresponding parametric finite element spaces which are then used, instead of  $V_{h,k}$. A precise explanation of this 
parametric trace finite element method  and  an error analysis of this method are given in \cite{lehrenfeld2016high,GrandeLehrenfeldReusken}.
\end{remark}

We now introduce the bilinear forms used in the discretization of \eqref{NotesAR_8}. Besides natural discrete analogons (corresponding to $\Gamma_h$) of the bilinear forms used in  \eqref{NotesAR_8} we need an additional one, related to stabilization. It is well known that in the setting of TraceFEM one needs a suitable stabilization for damping instabilities caused by ``small cuts'' \cite{cutFEM}. For this we use the \emph{volume normal derivative stabilization}, known from the literature, denoted by $s_h(\cdot,\cdot)$ below. We define:
\begin{align*} 
m_h(\xi,\eta) &:= \int_{\Gamma_h} \xi \eta \, ds_h, \\
b_h(\xi,\eta) &:= \int_{\Gamma_h} \nablaGh \xi \cdot  \nablaGh \eta \, ds_h,\\ 
b_{h,K}(\xi,\eta) &:= 2 \int_{\Gamma_h} (1-K_h) \nablaGh \xi \cdot  \nablaGh \eta \, ds_h,\\
s_h(\xi,\eta) & := \rho \int_{\Omega^\Gamma_h} (\bn_h \cdot \nabla \xi) (\bn_h \cdot \nabla \eta)  \, dx, \\
g(\xi) &:= -2 \int_{\Gamma_h} \blf_h \cdot \RotGh \xi \, ds_h,
\end{align*}
with $K_h$ an approximation of the Gauss curvature $K$, $\bn_h$ the normal on $\Gamma_h$ and $ \blf_h \approx \blf $ a data extension. For the stabilization parameter $\rho$ we restrict to the usual  range \cite{GrandeLehrenfeldReusken} 
\begin{equation} \label{rhoscaling}
  h \lesssim \rho \lesssim h^{-1}.
\end{equation}

We consider the following discretization of \eqref{NotesAR_8}: Determine $\phi_h \in V_{h,k}$ and $\psi_h \in V_{h,k}$ with $\int_{\Gamma_h} \psi_h \, ds_h= 0$,  such that
\begin{equation}\label{NotesAR_10}
 \begin{split}
m_h(\phi_h,\eta_h) + b_h(\psi_h, \eta_h) + s_h(\psi_h, \eta_h) &= 0 \quad \text{for all}~ \eta_h \in V_{h,k},\\
b_h(\phi_h,\xi_h) + s_h(\phi_h, \xi_h)- b_{h,K}(\psi_h, \xi_h)  &= g_h(\xi_h) \quad \text{for all}~\xi_h \in V_{h,k}.
\end{split}\end{equation}
One might consider different spaces $V_{h,k}$, $V_{h,k'}$ for the finite element functions $\phi_h$ and $\psi_h$, respectively. However, both analysis and numerical experiments show that there is no significant advantage of taking $k \neq k'$. Similarly, one could use different scaling of the stabilization terms $s_h(\cdot,\cdot)$ in the two equations in  \eqref{NotesAR_10}, but this also turns out to be not significant.

For $\Gamma_h$ one can take a piecewise linear approximation as in \eqref{eqG}. For a higher order accuracy the parametric trace finite element method, briefly discussed in Remark~\ref{RemGammah} can be used. 

\section{Trace finite element method for velocity and pressure reconstruction} \label{Sectdiscreconstr}
In this section we introduce canonical discrete versions of the variational problems \eqref{Variationsformulierung_u} and \eqref{Variationsformulierung_p} for the approximate reconstruction of $\bu$ and $p$, given the discrete solution $\psi_h$ of \eqref{NotesAR_10}.
We first consider the discrete version of \eqref{Variationsformulierung_u}. For this we introduce the notation:
\begin{align}
 m_h(\bu,\bv)&:= \int_{\Gamma_h} \bu \cdot \bv \, ds_h,  \label{defmh} \\
 s_h(\bu,\bv)&:= \rho_u \int_{ \Omega^\Gamma_h} \left( \nabla \bu \bn_h \right) \cdot \left( \nabla \bu \bn_h\right) \, dx. \label{defsh}
\end{align}
For the parameter $\rho_u$ in the volume normal derivative stabilization of the velocity we restrict to the range
\begin{equation} \label{choicerho}
   \rho_u \sim h.
\end{equation}
The scaling $\rho_u \sim h$ is motivated by the error analysis in Section~\ref{SectPressureVelocity}, cf. Remark~\ref{rem_approxU}.
The approximate reconstruction of the tangential velocity $\bu^\ast$  is given by the unique solution $\bu_h$ of the following problem:
Determine $\bu_h \in \left( V_{h,k_u} \right)^3$ such that
\begin{align}
\label{Variationsformulierung_u_diskret}
m_h(\bu_h , \bv_h) + s_h(\bu_h,\bv_h) = \int_{\Gamma_h} (\tilde{\bn}_h \times \nablaGh \psi_h)\cdot \bv_h \,ds_h \quad \forall~\bv_h \in \left( V_{h,k_u} \right)^3,
\end{align}
 with an approximate  normal $\tilde{\bn}_h$. The reason why we introduce yet another normal approximation $\tilde{\bn}_h $, besides $\bn_h$, is the following.
The use of  the discrete normal $\bn_h$ in the right-hand side of \eqref{Variationsformulierung_u_diskret} leads to suboptimal discretization error bounds. We propose to use a normal approximation $\tilde{\bn}_h$ that is one order more accurate than $\bn_h$, cf. Remark~\ref{rem_connection_up}. A specific  choice for $\tilde{\bn}_h$ will be discussed in Section~\ref{SectExp}.

We now explain the reconstruction of the pressure solution $p^\ast$. For this we define
\begin{align*}
s_h(p_h,\xi_h) & := \rho_p \int_{ \Omega^\Gamma_h} \left( \bn_h \cdot \nabla p_h \right) \left( \bn_h \cdot \nabla \xi_h \right) \, dx.
\end{align*}
Note that for this stabilization bilinear form  $s_h(\cdot,\cdot)$ we use the same notation as in \eqref{defsh}. For the stability parameter $\rho_p$ in this pressure stabilization term we restrict to the same range as in \eqref{rhoscaling}: 
 \begin{equation} \label{stabrho}  h \lesssim \rho_p \lesssim h^{-1}.
 \end{equation}
The discrete variational formulation of (\ref{Variationsformulierung_p}) is as follows: Determine $p_h \in V_{h,k_p}$ with $\int_{\Gamma_h} p_h \, ds_h=0$,  such that
\begin{align}
\label{Variationsformulierung_p_diskret}
b_h(p_h , \xi_h) + s_h(p_h,\xi_h) = \int_{\Gamma_h} (K_h \RotGh \psi_h + \blf_h )\cdot \nablaGh \xi_h \,ds_h \quad \forall~\xi_h \in V_{h,k_p}.
\end{align}
 \begin{remark} \rm
We comment on certain important properties of the overall discretization method. The velocity solution $\bu$ that solves  \eqref{Variationsformulierung_u} is by construction \emph{tangential to $\Gamma$ and  divergence-free}. Hence, up to a discretization error, the discrete solution $\bu_h$ of \eqref{Variationsformulierung_u_diskret} also has these properties. Therefore we do not need a Lagrange multiplier or a penalty approach to enforce these two crucial properties of the velocity. The problem \eqref{NotesAR_8} for the stream function $\psi$ consists of two coupled \emph{scalar} second order surface partial differential equations. For such problems well-established techniques, for example the surface finite element method \cite{DEreview}, are available. The method presented above has a straightforward extension to time-dependent Stokes equations (on a stationary surface); in such a setting one can use the stream function $\psi$ to follow the dynamics of the problem  and the reconstruction of $\bu$ and $p$ 
can be performed only when needed. Two disadvantages of our method are the following. Firstly, it can be applied only to a simply connected surface. If $\Gamma$ does not have this property, there are nonzero harmonic velocity fields, which are not a-priori known and difficult to determine. Secondly,   the reconstruction of $\bu$ is based on a differentiation of the stream function $\psi$ and thus we will lose one order of accuracy when computing a reconstruction based on  \eqref{Variationsformulierung_u_diskret}.  From the error analysis and numerical experiments presented below  we see that for optimal order discretization error bounds  for polynomials of degree $k_u$ in the velocity reconstruction,  the finite elements used in the discretization of the stream function problem must be of degree at least $k_u+1$, cf. Remark~\ref{rem_connection_up}. 
\end{remark}
\ \\

In the sections below we present  a  discretization error analysis of the methods \eqref{NotesAR_10}, \eqref{Variationsformulierung_u_diskret} and \eqref{Variationsformulierung_p_diskret}. In this analysis we make the simplifying assumption that there are  \emph{no geometry errors}, i.e., $\Gamma_h=\Gamma$. We comment on this in Remark~\ref{commentassumption}.

\section{Analysis of the stream function formulation} \label{SectAnalysisStreamfunction}
In this section we present an error analysis of the discretization \eqref{NotesAR_10} for the simplified case $\Gamma_h=\Gamma$.  This means that in the discrete problem  in \eqref{NotesAR_10} we use the bilinear forms as in \eqref{defm}--\eqref{defbK}, $s_h(\psi_h,\eta_h):= \rho \int_{\Gamma} (\bn \cdot \nabla \psi_h)(\bn\cdot \nabla \eta_h)\, ds$ and a right-hand side functional as in \eqref{rhsg}. Our analysis uses a technique inspired by the paper \cite{Falk1980}. In that paper a general framework for the analysis of mixed problems is presented which applies to the discretization of the biharmonic equation (reformulated as a coupled second order system) in Euclidean space. The discrete problem \eqref{NotesAR_10} that we consider, however, does \emph{not} fit into the framework presented in \cite{Falk1980}. This is caused by the bilinear form $b_{K}(\xi,\eta)=2 \int_{\Gamma} (1-K) \nablaG \xi \cdot \nablaG  \eta \, ds$. This term does not occur in the framework presented in \cite{
Falk1980}, which involves only the terms $m(\cdot,\cdot)$ (denoted by $a(\cdot,\cdot)$ in \cite{Falk1980}) and $b(\cdot,\cdot)$. Similar to $b(\cdot,\cdot)$ the ``new'' term $b_{K}(\xi,\eta)$ contains gradients of its arguments, but opposite to $b(\cdot,\cdot)$ it does \emph{not} have an ellipticity property. This is due to the fact that for general smooth closed surfaces $\Gamma$ we do not have the bound  $K(x) < 1 $ for all $x \in \Gamma$ (or $K(x) < 0 $, for the case $\alpha=0$ in \eqref{defblfa}).  To be able to control the bilinear form $b_{K}(\cdot,\cdot)$, in the error analysis we use the fundamental estimate \eqref{NotesAR_7} that is derived in the next section, cf. Remark~\ref{rem_approxU}.

The structure of the analysis is as follows. In Section~\ref{sectprelim} we collect a few relevant results known from the literature and derive the fundamental inequality \eqref{NotesAR_7}. In Section~\ref{secterrstreamfunction} discretization error bounds for the solution of the discrete stream function problem \eqref{NotesAR_10} are derived. For this we first prove  bounds for $\phi^\ast-\phi_h^\ast$ in different norms (Section~\ref{secterrA}), then we present estimates for $\psi^\ast - \psi_h^\ast$ (Section~\ref{secterrB}), and combining these results we obtain discretization error bounds (Section~\ref{secterrC}). 

\subsection{Preliminaries} \label{sectprelim}
The following Korn type inequality is derived in \cite{Reusken_Streamfunction}:  there exists $c_K>0$:
\begin{equation} \label{Korn}
\|\bv\|_{L^2(\Gamma)} +  \|E_s(\bv) \|_{L^2(\Gamma)} \geq c_K \| \bv \|_{H^1(\Gamma)} \quad \text{for all}~\bv \in \bH^1_t(\Gamma).
\end{equation}
From Lemma \ref{lemma2} it follows that there are  strictly positive constants $c_0,\hat{c}_0$ such that 
\begin{align}
\label{NotesAR_2}
c_0 \| \psi \|_{H^2(\Gamma)} \leq \| \RotG \psi \|_{H^1(\Gamma)} \leq \hat{c}_0 \| \psi \|_{H^2(\Gamma)} \quad \text{for all}~\psi \in H^2_\ast(\Gamma).
\end{align}
 In the next lemma we present a fundamental result for the bilinear form $b_K(\cdot,\cdot)$ that is derived using its connection to the surface Stokes problem. 
\begin{lemma}
\label{lem_NotesAR_1}
The following  inequality holds:
\begin{align*}
\| \DeltaG \psi \|_{L^2(\Gamma)}^2 + b_K(\psi,\psi) \geq c_F \| \DeltaG \psi \|_{L^2(\Gamma)}^2 \quad \text{for all}~ \psi \in H^2_*(\Gamma),
\end{align*}
with $c_F:=2 c_K c_0^2 > 0$, and $c_K$, $c_0$ as in \eqref{Korn} and \eqref{NotesAR_2}, respectively.
\end{lemma}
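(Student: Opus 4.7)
The key observation is that the left-hand side is, up to a factor of $2$, exactly the Stokes energy $\tilde a_1(\psi,\psi)$ from Lemma~\ref{lemma1}. The plan is to identify this energy, push it onto the velocity side via the identity $\tilde a_1(\psi,\psi) = a_1(\RotG \psi, \RotG \psi)$, and then control it from below by $\|\DeltaG \psi\|_{L^2(\Gamma)}^2$ using a Korn-type estimate together with \eqref{NotesAR_2}. In this way the indefiniteness of $1-K$ is absorbed into a quantity that is coercive for reasons independent of the sign of the Gauss curvature.

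Concretely, I would first unpack $b_K(\psi,\psi) = 2\int_\Gamma (1-K)|\nablaG \psi|^2\,ds$ and compare with \eqref{identa} at $\alpha=1$, obtaining
\begin{equation*}
\tfrac12 \|\DeltaG \psi\|_{L^2(\Gamma)}^2 + \tfrac12 b_K(\psi,\psi) \,=\, \tilde a_1(\psi,\psi) \,=\, a_1(\RotG \psi,\RotG \psi),
\end{equation*}
i.e.\ $\|\DeltaG \psi\|_{L^2(\Gamma)}^2 + b_K(\psi,\psi) = 2\|E_s(\RotG \psi)\|_{L^2(\Gamma)}^2 + 2\|\RotG \psi\|_{L^2(\Gamma)}^2$. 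Note that $\RotG \psi = \bn \times \gradG \psi$ is tangential, so $\RotG \psi \in \bH^1_t(\Gamma)$ for $\psi \in H^2_\ast(\Gamma)$, which is the natural setting for Korn's inequality.

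Next I would apply the Korn inequality \eqref{Korn} (in its squared form, which is the one consistent with the announced value of $c_F$) to $\bv = \RotG \psi$, giving
\begin{equation*}
\|E_s(\RotG \psi)\|_{L^2(\Gamma)}^2 + \|\RotG \psi\|_{L^2(\Gamma)}^2 \,\geq\, c_K \|\RotG \psi\|_{H^1(\Gamma)}^2.
\end{equation*}
Combining with \eqref{NotesAR_2} yields $\|\RotG \psi\|_{H^1(\Gamma)}^2 \geq c_0^2 \|\psi\|_{H^2(\Gamma)}^2$, and the trivial bound $\|\DeltaG \psi\|_{L^2(\Gamma)} \leq \|\psi\|_{H^2(\Gamma)}$ closes the chain with constant $c_F = 2 c_K c_0^2$.

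The proof is essentially a repackaging, so I do not anticipate a real obstacle. The one conceptual point worth stressing is why this works at all: although the bilinear form $b_K$ is not elliptic on its own when $K$ takes values above $1$, its combination with $\|\DeltaG \psi\|_{L^2(\Gamma)}^2$ coincides (up to a factor $2$) with the Stokes dissipation of the corresponding velocity field, and the latter is controlled by Korn plus the surface Helmholtz equivalence \eqref{NotesAR_2}, neither of which is sensitive to the sign of $K$. This is precisely the ``connection to the surface Stokes problem'' highlighted in the statement.
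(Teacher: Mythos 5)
Your proof is correct and follows essentially the same route as the paper: identify the left-hand side with $2a_1(\RotG\psi,\RotG\psi)$ via Lemma~\ref{lemma1}, apply the Korn inequality \eqref{Korn} to $\bv=\RotG\psi$, and conclude with \eqref{NotesAR_2} and $\|\DeltaG\psi\|_{L^2(\Gamma)}\leq\|\psi\|_{H^2(\Gamma)}$. Your explicit remark that one needs the squared form of \eqref{Korn} to match the stated constant $c_F=2c_Kc_0^2$ is a small point the paper glosses over (the literal statement of \eqref{Korn} only yields a squared version with a possibly smaller constant), but this affects only the value of $c_F$, not the validity of the result.
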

\begin{proof}
Let $\psi \in H^2_*(\Gamma)$ be given and define $\bu := \RotG \psi \in \bH^{1}_{t, \rm div}(\Gamma)$. Using \eqref{identa} (with $\alpha=1$), \eqref{Korn} and \eqref{NotesAR_2} we obtain
\begin{align*}
\| \DeltaG \psi \|_{L^2(\Gamma)}^2 + b_K(\psi,\psi) = 2  a_1(\bu,\bu) \geq 2 c_K \| \RotG \psi \|_{H^1(\Gamma)}^2 \geq 2 c_K c_0^2 \| \psi \|_{H^2(\Gamma)}^2.
\end{align*}
\end{proof}
\ \\[1ex]
This yields the following corollary.
\begin{corollary}
\label{cor_NotesAR_1}
Consider the standard Laplace-Beltrami equation: Given $f \in L^2_\ast(\Gamma):=\{\, f \in L^2(\Gamma)~|~\int_\Gamma f\, ds =0\,\}$, determine $\psi \in H^1_\ast (\Gamma)$ such that
\begin{align}
\label{NotesAR_4}
b(\psi,\xi) = \int_{\Gamma} f \xi \, ds \quad \text{for all}~ \xi \in  H^1 (\Gamma).
\end{align}
The unique solution $\psi$ has regularity $\psi \in H^2(\Gamma)$ and 
\begin{align*}
\| f \|_{L^2(\Gamma)}^2 + b_K(\psi,\psi) \geq c_F \| f \|_{L^2(\Gamma)}^2
\end{align*}
holds.
\end{corollary}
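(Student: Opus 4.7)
The plan is to reduce the corollary to a direct application of Lemma~\ref{lem_NotesAR_1}, after first establishing enough regularity on $\psi$ so that the lemma's hypotheses are met. The statement has three ingredients: existence/uniqueness of $\psi \in H^1_\ast(\Gamma)$, the $H^2$ regularity upgrade, and the lower bound involving $b_K$. None of these require new machinery.

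First I would handle well-posedness. Because $\Gamma$ is a smooth closed surface, the Poincaré inequality on the zero-mean subspace $H^1_\ast(\Gamma)$ shows that $b(\cdot,\cdot)$ is coercive on $H^1_\ast(\Gamma)$, so Lax--Milgram yields a unique solution $\psi \in H^1_\ast(\Gamma)$ of \eqref{NotesAR_4}; the compatibility condition $f \in L^2_\ast(\Gamma)$ is exactly what makes testing with constants consistent. Then standard elliptic regularity for the Laplace--Beltrami operator on a $C^3$ (in particular $C^2$) closed surface gives $\psi \in H^2(\Gamma)$ together with $\|\psi\|_{H^2(\Gamma)} \lesssim \|f\|_{L^2(\Gamma)}$, and, since $\int_\Gamma \psi\,ds=0$, in fact $\psi \in H^2_\ast(\Gamma)$.

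Next I would extract the strong form of the equation. With $\psi \in H^2(\Gamma)$, one integration by parts in \eqref{NotesAR_4} converts the weak identity into $\int_\Gamma (-\Delta_\Gamma \psi - f)\,\xi\,ds = 0$ for all $\xi \in H^1(\Gamma)$, and hence, by density, $-\Delta_\Gamma \psi = f$ almost everywhere on $\Gamma$. In particular $\|\Delta_\Gamma \psi\|_{L^2(\Gamma)}^2 = \|f\|_{L^2(\Gamma)}^2$.

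Finally I would invoke Lemma~\ref{lem_NotesAR_1}. Since $\psi \in H^2_\ast(\Gamma)$, the lemma applies and gives
\[
\|\Delta_\Gamma \psi\|_{L^2(\Gamma)}^2 + b_K(\psi,\psi) \;\geq\; c_F\,\|\Delta_\Gamma \psi\|_{L^2(\Gamma)}^2.
\]
Substituting the identity $\|\Delta_\Gamma \psi\|_{L^2(\Gamma)}^2 = \|f\|_{L^2(\Gamma)}^2$ on both sides yields the claimed bound $\|f\|_{L^2(\Gamma)}^2 + b_K(\psi,\psi) \geq c_F\,\|f\|_{L^2(\Gamma)}^2$. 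There is no real obstacle here; the only point requiring a little care is making sure that the zero-mean condition transfers to $\psi$ so that Lemma~\ref{lem_NotesAR_1} is applicable, and that the elliptic regularity statement used to pass from $H^1$ to $H^2$ is justified under Assumption~\ref{ass} on the smoothness of $\Gamma$.
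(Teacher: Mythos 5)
Your proposal is correct and follows exactly the route the paper intends: the paper states this as an immediate consequence of Lemma~\ref{lem_NotesAR_1}, implicitly using that the weak solution satisfies $-\Delta_\Gamma \psi = f$ (after the standard $H^2$ regularity upgrade), so that $\|\Delta_\Gamma\psi\|_{L^2(\Gamma)}=\|f\|_{L^2(\Gamma)}$ and the lemma's inequality becomes the claimed bound. Your additional care about well-posedness and the transfer of the zero-mean condition to $\psi$ is exactly the detail the paper suppresses.
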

We derive a discrete variant of Corollary \ref{cor_NotesAR_1}. 
\begin{corollary}
\label{cor_NotesAR_2} 
For $f \in L^2_\ast(\Gamma)$ consider the following discrete Laplace-Beltrami problem (with stabilization): Determine $\psi_h \in V_{h,k}$ with $\int_\Gamma \psi_h \, ds=0$  such that
\begin{align}
\label{NotesAR_5}
b(\psi_h,\xi_h) + s_h(\psi_h,\xi_h) = \int_{\Gamma} f \xi_h \, ds \quad \text{for all}~ \xi_h \in  V_{h,k}.
\end{align}
Let $\psi_h$ be the unique solution of \eqref{NotesAR_5}.
  For $h$ sufficiently small the following estimate holds:
\begin{align*}
\| f \|_{L^2(\Gamma)}^2 + b_K(\psi_h,\psi_h) \geq \frac{1}{2} c_F \| f \|_{L^2(\Gamma)}^2.
\end{align*}
\end{corollary}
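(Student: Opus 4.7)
The plan is to obtain the discrete inequality as a perturbation of the continuous one from Corollary~\ref{cor_NotesAR_1}. Let $\psi \in H^1_\ast(\Gamma) \cap H^2(\Gamma)$ denote the unique solution of the continuous Laplace-Beltrami problem \eqref{NotesAR_4} with the same right-hand side $f$. By Corollary~\ref{cor_NotesAR_1} we already have
\[
\|f\|_{L^2(\Gamma)}^2 + b_K(\psi,\psi) \geq c_F \|f\|_{L^2(\Gamma)}^2,
\]
so it suffices to show that the perturbation introduced by replacing $\psi$ by $\psi_h$ is bounded by $\tfrac{1}{2}c_F\|f\|_{L^2(\Gamma)}^2$ for $h$ small, i.e.
\[
|b_K(\psi_h,\psi_h) - b_K(\psi,\psi)| \le \tfrac{1}{2} c_F \|f\|_{L^2(\Gamma)}^2.
\]

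First I would rewrite the difference in factored form. Since $b_K$ is a symmetric bilinear form,
\[
b_K(\psi_h,\psi_h) - b_K(\psi,\psi) = b_K(\psi_h - \psi,\psi_h + \psi),
\]
which is bounded by $2\|1-K\|_{L^\infty(\Gamma)} \|\nablaG(\psi_h-\psi)\|_{L^2(\Gamma)} \|\nablaG(\psi_h+\psi)\|_{L^2(\Gamma)}$. Thus I need (i) an $H^1$-type discretization error estimate $\|\nablaG(\psi - \psi_h)\|_{L^2(\Gamma)} \lesssim h \|f\|_{L^2(\Gamma)}$ for the stabilized TraceFEM discretization \eqref{NotesAR_5} of the Laplace-Beltrami equation, and (ii) a uniform stability bound $\|\nablaG \psi_h\|_{L^2(\Gamma)} \lesssim \|f\|_{L^2(\Gamma)}$. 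Both are standard for the TraceFEM with normal-derivative volume stabilization in the parameter range \eqref{rhoscaling}: the first is the well-known optimal $H^1$ error bound (combined with the continuous regularity $\|\psi\|_{H^2(\Gamma)} \lesssim \|f\|_{L^2(\Gamma)}$), and the second follows from the $L^2$-coercivity of $b(\cdot,\cdot)+s_h(\cdot,\cdot)$ on the mean-zero subspace together with a Poincar\'e inequality. I would cite the existing TraceFEM literature (e.g.\ the references to \cite{GrandeLehrenfeldReusken,olshanskii2016trace}) rather than reprove these.

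Combining (i) and (ii) with $\|\nablaG \psi\|_{L^2(\Gamma)} \lesssim \|f\|_{L^2(\Gamma)}$ yields
\[
|b_K(\psi_h,\psi_h)-b_K(\psi,\psi)| \lesssim h\,\|f\|_{L^2(\Gamma)}^2,
\]
with a constant independent of $h$ and $f$. Choosing $h$ small enough that this constant times $h$ is at most $\tfrac{1}{2}c_F$ and subtracting from the continuous inequality gives
\[
\|f\|_{L^2(\Gamma)}^2 + b_K(\psi_h,\psi_h) \ge c_F\|f\|_{L^2(\Gamma)}^2 - \tfrac{1}{2}c_F\|f\|_{L^2(\Gamma)}^2 = \tfrac{1}{2}c_F\|f\|_{L^2(\Gamma)}^2,
\]
as required.

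The only non-routine point is justifying the $H^1$ error estimate and the $H^1$ stability in the stabilized TraceFEM setting; since we are assuming $\Gamma_h = \Gamma$ here, there are no geometry errors and these estimates reduce to standard finite element analysis for the Laplace-Beltrami operator with an added normal-derivative volume stabilization that is consistent on $\Gamma$. I expect this step to be the main (but minor) obstacle to a fully self-contained write-up, and the cleanest treatment is to quote the corresponding results from the TraceFEM literature.
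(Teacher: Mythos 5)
Your proposal is correct and follows essentially the same route as the paper: both start from Corollary~\ref{cor_NotesAR_1} and control the perturbation $|b_K(\psi_h,\psi_h)-b_K(\psi,\psi)|\lesssim h\|f\|_{L^2(\Gamma)}^2$ using the standard TraceFEM $H^1$ error bound and stability estimate (the paper quotes Theorem 5.6 of \cite{GrandeLehrenfeldReusken}), then absorbs it for $h$ small. Your factorization $b_K(\psi_h-\psi,\psi_h+\psi)$ is algebraically identical to the paper's splitting $b_K(\psi_h-\psi,\psi_h)+b_K(\psi,\psi_h-\psi)$, so there is no substantive difference.
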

\begin{proof} 
Let $\psi$ and $\psi_h$ be the solution of \eqref{NotesAR_4} and \eqref{NotesAR_5}, respectively.
From the literature (Theorem 5.6 in \cite{GrandeLehrenfeldReusken}) we have the stability and error estimates:
\begin{align*}
  \big(b(\psi_h,\psi_h) +s_h(\psi_h,\psi_h)\big)^\frac12  & \leq c \|f\|_{L^2(\Gamma)}, \\
  \|\nablaG(\psi - \psi_h )\|_{L^2(\Gamma)} & \leq c h \|f\|_{L^2(\Gamma)}.
\end{align*}
Using these we obtain
\begin{align*}
| b_K(\psi_h,\psi_h) - b_K(\psi,\psi) | &= | b_K(\psi_h - \psi , \psi_h) + b_K(\psi,\psi_h - \psi) | \\
&\leq c \|\nablaG(\psi-\psi_h)\|_{L^2(\Gamma)} \|\nablaG\psi_h\|_{L^2(\Gamma)}  
\leq c h  \| f \|_{L^2(\Gamma)}^2.
\end{align*}
Combining this with the result in Corollary \ref{cor_NotesAR_1} yields
\begin{align*}
\| f \|_{L^2(\Gamma)}^2 + b_K(\psi_h,\psi_h) &= \| f \|_{L^2(\Gamma)}^2 + b_K(\psi,\psi) + \left( b_K(\psi_h,\psi_h) - b_K(\psi,\psi)  \right) \\
&\geq c_F \| f \|_{L^2(\Gamma)}^2 - c h  \| f \|_{L^2(\Gamma)}^2 
\geq \frac{1}{2} c_F \| f \|_{L^2(\Gamma)}^2,
\end{align*}
for $h$ sufficiently small.
\end{proof}
\ \\[1ex]
As a direct consequence we have  the following estimate for the solution $\psi_h$ of \eqref{NotesAR_5}, which plays a key role in the analysis below:
\begin{align}
\label{NotesAR_7}
- b_K(\psi_h,\psi_h) \leq (1-\frac{1}{2} c_F) \| f \|_{L^2(\Gamma)}^2.
\end{align}

We recall a result that is standard in the analysis of trace finite element methods \cite{GrandeLehrenfeldReusken,burmanembedded}:
\begin{align}
\label{lem_NotesAR_3_help}
\| \xi_h \|_{L^2(\Omega^\Gamma_h)}^2 \lesssim h \| \xi_h \|_{L^2(\Gamma)}^2 + h^2 \|\bn\cdot \nabla \xi_h\|_{L^2(\Omega_h^\Gamma)}^2 \quad \text{for all} ~ \xi_h \in V_{h,k}.
\end{align}

\begin{lemma}
\label{lem_NotesAR_3}
The problem \eqref{NotesAR_10} has a unique solution.
\end{lemma}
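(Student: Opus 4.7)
The plan is to establish uniqueness for the square finite-dimensional system \eqref{NotesAR_10}; since the number of equations equals the number of unknowns, existence follows automatically once we verify that the homogeneous problem admits only the trivial solution. So I set $g_h \equiv 0$ throughout and aim to show $\phi_h=\psi_h=0$ in $V_{h,k}$.

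First I would extract a zero-mean property for $\phi_h$. Testing the first equation with the constant function $\eta_h \equiv 1 \in V_{h,k}$ makes $b_h$ and $s_h$ vanish (zero gradient, zero normal derivative), leaving $\int_{\Gamma}\phi_h\, ds = 0$. Next I would combine the two equations in the standard energy-balance way: test the first equation with $\eta_h=\phi_h$ and the second with $\xi_h=\psi_h$, then use the symmetry of $b_h$ and $s_h$ to cancel the off-diagonal coupling. This yields the clean identity
\begin{equation*}
\|\phi_h\|_{L^2(\Gamma)}^2 + b_{h,K}(\psi_h,\psi_h) = 0.
\end{equation*}

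The crux of the argument—and what I expect to be the main obstacle—is that $b_{h,K}$ is indefinite (because $K$ can be positive and large), so this identity alone does \emph{not} force $\phi_h$ to vanish. Here I would invoke the fundamental new estimate \eqref{NotesAR_7}. Rewriting the first equation as
\begin{equation*}
b_h(\psi_h,\eta_h) + s_h(\psi_h,\eta_h) = \int_{\Gamma}(-\phi_h)\eta_h\, ds \quad \text{for all}~\eta_h\in V_{h,k},
\end{equation*}
and using that $-\phi_h \in L^2_\ast(\Gamma)$ by the first step, we identify $\psi_h$ as the discrete Laplace--Beltrami solution of \eqref{NotesAR_5} with data $f=-\phi_h$. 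Corollary~\ref{cor_NotesAR_2} then gives, for $h$ sufficiently small, $-b_K(\psi_h,\psi_h)\le (1-\tfrac12 c_F)\|\phi_h\|_{L^2(\Gamma)}^2$. Substituting into the energy identity produces $\tfrac12 c_F\|\phi_h\|_{L^2(\Gamma)}^2 \le 0$, and since $c_F>0$ this forces $\phi_h=0$ on $\Gamma$.

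With $\phi_h|_\Gamma=0$, the first equation reduces to $b_h(\psi_h,\eta_h)+s_h(\psi_h,\eta_h)=0$; choosing $\eta_h=\psi_h$ gives $\|\nablaG\psi_h\|_{L^2(\Gamma)}^2 + s_h(\psi_h,\psi_h)=0$. Hence $\psi_h$ is constant on the connected surface $\Gamma$, and the zero-mean constraint $\int_\Gamma\psi_h\,ds=0$ forces $\psi_h|_\Gamma=0$, while $s_h(\psi_h,\psi_h)=0$ gives $\bn\cdot\nabla\psi_h=0$ in $\Omega_h^\Gamma$. The auxiliary inequality \eqref{lem_NotesAR_3_help} then lifts this to $\psi_h=0$ in $V_{h,k}$. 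With $\psi_h=0$, the second equation collapses to $b_h(\phi_h,\xi_h)+s_h(\phi_h,\xi_h)=0$; testing with $\xi_h=\phi_h$ and re-applying \eqref{lem_NotesAR_3_help} together with $\phi_h|_\Gamma=0$ yields $\phi_h=0$ in $V_{h,k}$, completing the proof.
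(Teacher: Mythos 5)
Your proof is correct and follows essentially the same route as the paper: the same energy identity $\|\phi_h\|_{L^2(\Gamma)}^2 + b_K(\psi_h,\psi_h)=0$ obtained by subtracting the two tested equations, the same identification of $\psi_h$ as the discrete Laplace--Beltrami solution with data $f=-\phi_h$ so that Corollary~\ref{cor_NotesAR_2} (i.e., \eqref{NotesAR_7}) yields $\phi_h|_\Gamma=0$, and the same bootstrapping via \eqref{lem_NotesAR_3_help} to conclude $\psi_h=\phi_h=0$ in $V_{h,k}$. Your explicit check that $\int_\Gamma\phi_h\,ds=0$ (testing with $\eta_h\equiv 1$), which justifies $-\phi_h\in L^2_\ast(\Gamma)$ in Corollary~\ref{cor_NotesAR_2}, is a small detail the paper leaves implicit.
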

\begin{proof}
Take $g \equiv 0$. We have to show $\psi_h=\phi_h=0$. For the choice $\xi_h=\psi_h$ and $\eta_h=\phi_h$ as test functions in \eqref{NotesAR_10} we get by subtracting both equations
\begin{align}
\label{NotesAR_11}
m(\phi_h,\phi_h) + b_K(\psi_h,\psi_h) = 0.
\end{align} 
Note that the first equation in \eqref{NotesAR_10} is of the form as in \eqref{NotesAR_5} with $f=-\phi_h$. Using  Corollary \ref{cor_NotesAR_2} yields
\begin{align*}
\| \phi_h \|_{L^2(\Gamma)}^2 + b_K(\psi_h,\psi_h) \geq \frac{1}{2} c_F \| \phi_h \|_{L^2(\Gamma)}^2,
\end{align*}
and with \eqref{NotesAR_11} we obtain
\begin{align*}
0 \geq \frac{1}{2} c_F \| \phi_h \|_{L^2(\Gamma)}^2, \quad \text{i.e.,}~\| \phi_h \|_{L^2(\Gamma)} = 0.
\end{align*}
 Testing  the first equation of \eqref{NotesAR_10} with $\eta_h=\psi_h$, applying the Cauchy-Schwarz inequality in combination with the previous result we get
\begin{align*}
b(\psi_h,\psi_h) + s_h(\psi_h, \psi_h) = - m(\phi_h,\psi_h) \leq \|\phi_h\|_{L^2(\Gamma)} \|\psi_h\|_{L^2(\Gamma)} = 0.
\end{align*}
 Using a Poincare inequality, $\|\psi_h \|_{L^2(\Gamma)}^2 \leq c\, b(\psi_h,\psi_h)$, and the inequality \eqref{lem_NotesAR_3_help}, this implies 
$\|\psi_h\|_{L^2(\Omega^\Gamma_h)}=0$ and therefore, $\psi_h=0$. Using $\xi_h=\phi_h$ as a test function in the second equation in \eqref{NotesAR_10} and the Cauchy-Schwarz inequality we get
\begin{align*}
 b(\phi_h,\phi_h) + s_h(\phi_h, \phi_h) = b_K(\psi_h,\phi_h)  = 0.
\end{align*}
With the same arguments as above we  conclude $\phi_h=0$. 
\end{proof}
\ \\[1ex]
In the following we denote the unique solution by $\psi_h^*$, $\phi_h^*$.
 
\subsection{Error analysis for the stream function} \label{secterrstreamfunction}
It is convenient to introduce the notation
\[
  A(\psi,\xi):= b(\psi,\xi)+s_h(\psi,\xi).
\]
The corresponding seminorm is denoted by $\|\cdot\|_A$. The usual $H^1(\Gamma)$ semi-norm is defined by $|\cdot|_1:=b(\cdot,\cdot)^\frac12$. For functions $\psi  \in H^1(\Gamma)$ we always use a constant extension  along normals, which is also denoted by $\psi$.  Hence, $s_h(\psi,\xi_h)=0$ for all $\xi_h \in V_{h,k}$ holds. 
We introduce the projection $\Pi_h \, : \, H^1(\Gamma) \to V_{h,k}\cap H_\ast^1(\Gamma)$ defined by
\begin{align}
\label{NotesAR_Projection}
A(\Pi_h \psi , \eta_h)  = A(\psi,\eta_h)=b(\psi , \eta_h) \quad \text{for all} ~ \eta_h \in V_{h,k},
\end{align}
i.e.,
\[
  b(\Pi_h \psi- \psi, \eta_h)+ s_h(\Pi_h \psi- \psi,\eta_h)=0 \quad \text{for all} ~ \eta_h \in V_{h,k}.
\]
This projection corresponds to the solution operator of the discretization of the scalar Laplace-Beltrami equation in \eqref{NotesAR_5}.  For this problem, discretization error analyses are available in the literature \cite{GrandeLehrenfeldReusken,reusken2015analysis,burmanembedded,Bonito2019}, which result in \emph{optimal discretization error bounds}, both in the energy norm and the $L^2(\Gamma)$ norm. These results yield the following proposition:
\begin{proposition} \label{propestimates} Let $m \geq 3$ be such that the solution of \eqref{NotesAR_8} has regularity $\psi^* \in H^m(\Gamma)$ and $\phi^* \in H^{m-2}(\Gamma)$. For the Laplace-Beltrami Galerkin projection $\Pi_h$ the following estimates hold:
\begin{align}
\label{NotesAR_13a}
\| \psi^* -\Pi_h \psi^* \|_A &\lesssim h^{r-1} \| \psi^* \|_{H^r(\Gamma)}, \quad 1 \leq r \leq s_1, \\
\label{NotesAR_13b}
\| \psi^* -\Pi_h \psi^* \|_{L^2(\Gamma)} &\lesssim h^{r} \| \psi^* \|_{H^r(\Gamma)} \quad 0 \leq r \leq s_1,\\
\label{NotesAR_14a}
\| \phi^* -\Pi_h \phi^* \|_A &\lesssim h^{r-1} \| \phi^* \|_{H^r(\Gamma)} \quad 1 \leq r \leq s_2, \\
\label{NotesAR_14b}
\| \phi^* -\Pi_h \phi^* \|_{L^2(\Gamma)} &\lesssim h^{r} \| \phi^* \|_{H^r(\Gamma)} \quad 0 \leq r \leq s_2,\\
\text{\rm with} ~ s_1:=\min \{m, k+1\}~&~\text{\rm and}~~ s_2:=\min \{m-2, k+1\}.\label{defs1s2}
\end{align}
\end{proposition}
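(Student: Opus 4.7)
The plan is to reduce each of \eqref{NotesAR_13a}--\eqref{NotesAR_14b} to the standard optimal-order discretization error bounds for the stabilized trace finite element method applied to a scalar Laplace-Beltrami problem; such bounds are precisely the content of \cite{GrandeLehrenfeldReusken,reusken2015analysis,burmanembedded,Bonito2019}. The key observation is that, by definition, $\Pi_h \psi$ is the stabilized Galerkin approximation of the (zero-mean) Laplace-Beltrami solution $\tilde\psi \in H_\ast^1(\Gamma)$ of $b(\tilde\psi,\eta)=b(\psi,\eta)$ for all $\eta \in H^1(\Gamma)$, whose exact solution is $\tilde\psi = \psi - |\Gamma|^{-1}\int_\Gamma \psi\,ds$. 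Using the constant-along-normals extension of $\psi$ one has $s_h(\psi,\eta_h)=0$, so the defining identity of $\Pi_h$ is in fact a Galerkin orthogonality $A(\psi - \Pi_h\psi,\eta_h)=0$ for every $\eta_h \in V_{h,k}$. Note also that both $\psi^*$ and $\phi^*$ belong to $H_\ast^1(\Gamma)$ (the latter because $\int_\Gamma \phi^*\,ds = -b(\psi^*,1)=0$ follows from the first equation of \eqref{NotesAR_8}), so the mean-value bookkeeping is trivial for the problem at hand.

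For \eqref{NotesAR_13a} I would then apply a C\'ea-type argument in the $\|\cdot\|_A$-seminorm, which is a norm on $V_{h,k}\cap H_\ast^1(\Gamma)$ by a Poincar\'e inequality combined with \eqref{lem_NotesAR_3_help}: the Galerkin orthogonality yields $\|\psi^* - \Pi_h \psi^*\|_A \lesssim \inf_{v_h \in V_{h,k}} \|\psi^* - v_h\|_A$. Choosing $v_h$ to be the volumetric nodal interpolant on the active mesh $\cT_h^\Gamma$ and invoking the combined interpolation estimate for $b(\cdot,\cdot)$ and $s_h(\cdot,\cdot)$ from \cite{GrandeLehrenfeldReusken} gives the rate $h^{r-1}\|\psi^*\|_{H^r(\Gamma)}$ in the admissible range $1\leq r \leq s_1=\min\{m,k+1\}$.

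The $L^2$ estimate \eqref{NotesAR_13b} follows by an Aubin-Nitsche duality argument. Set $e=\psi^*-\Pi_h\psi^*$ (which has zero mean) and introduce the dual problem: find $w\in H_\ast^1(\Gamma)$ with $b(w,\xi)=\int_\Gamma e\,\xi\,ds$ for all $\xi\in H^1(\Gamma)$; by elliptic regularity $\|w\|_{H^2(\Gamma)} \lesssim \|e\|_{L^2(\Gamma)}$. Testing this identity with $\xi=e$, inserting the Galerkin orthogonality $A(e,\Pi_h w)=0$, and combining \eqref{NotesAR_13a} with the energy bound $\|w-\Pi_h w\|_A \lesssim h\|w\|_{H^2(\Gamma)}$ produces the extra factor of $h$ needed to upgrade the energy estimate to an $L^2$ estimate of order $h^r$. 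The bounds \eqref{NotesAR_14a}--\eqref{NotesAR_14b} are obtained by the exact same two arguments applied to $\phi^*$, whose regularity is only $H^{m-2}(\Gamma)$ and hence the admissible range of $r$ is capped at $s_2=\min\{m-2,k+1\}$.

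The main technical obstacle, and the reason one has to appeal to the cited literature rather than to textbook arguments, is the control of the volume normal derivative stabilization $s_h(\cdot,\cdot)$: it is integrated over the full cut cells of $\Omega^\Gamma_h$, whose shapes are essentially arbitrary with respect to $\Gamma$, while the interpolation error is measured against a norm that lives only on $\Gamma$. The interpolation estimate $s_h(\psi - I_h\psi,\psi-I_h\psi)^{1/2} \lesssim h^{r-1}\|\psi\|_{H^r(\Gamma)}$, which is what makes the C\'ea step go through with optimal order, is precisely the one established in \cite{GrandeLehrenfeldReusken}; once this estimate is available, both the energy bound and the Aubin-Nitsche upgrade are entirely routine.
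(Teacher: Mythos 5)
Your proposal is correct and follows essentially the same route as the paper, which proves Proposition~\ref{propestimates} simply by appealing to the known optimal-order TraceFEM error analyses for the stabilized Laplace--Beltrami problem in \cite{GrandeLehrenfeldReusken,reusken2015analysis,burmanembedded,Bonito2019}; you merely spell out the standard C\'ea/Aubin--Nitsche skeleton that underlies those citations, correctly identifying the cut-cell interpolation estimate for $s_h(\cdot,\cdot)$ as the only non-routine ingredient. The mean-value bookkeeping and the observation that $s_h(\psi,\eta_h)=0$ for normal extensions are handled exactly as in the paper, so no gap remains (the endpoint cases $r=0,1$ are just the stability statements contained in the same references).
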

\begin{remark} \label{RemDE} \rm 
The estimates in Proposition~\ref{propestimates} combined with \eqref{NotesAR_7} are the essential ingredients for the analysis below to work. Therefore, the analysis  also applies to the surface finite element method, for which the results \eqref{NotesAR_13a}--\eqref{NotesAR_14b}  are known to hold.
\end{remark}

\subsubsection{Bounds for the error $\phi^* - \phi_h^*$} \label{secterrA}
The analysis below is  along the same lines as in \cite{Falk1980}. However, as indicated already above, we have  an additional term $b_K(\cdot,\cdot)$ that has to be controlled.
\begin{theorem}
\label{NotesAR_Theorem1} Let $(\psi^\ast,\phi^\ast)$ and $(\psi_h^\ast,\phi_h^\ast)$ be the solutions of \eqref{NotesAR_8} and \eqref{NotesAR_10} {\rm(}with $\Gamma_h=\Gamma${\rm)}, respectively.
The following estimates hold:
\begin{align*}
\| \phi^* - \phi_h^* \|_{L^2(\Gamma)} &\lesssim h^{s_2} \| \phi^* \|_{H^{s_2}(\Gamma)} + h^{s_1-1} \| \psi^* \|_{H^{s_1}(\Gamma)} + | \psi^* - \psi_h^* |_1,\\
\| \phi^* - \phi_h^* \|_A &\lesssim h^{s_2-1} \| \phi^* \|_{H^{s_2}(\Gamma)} + | \psi^* - \psi_h^* |_1,
\end{align*}
with $s_1,s_2$ as in \eqref{defs1s2}.
\end{theorem}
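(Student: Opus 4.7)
The plan is a Falk--Babu\v{s}ka-type argument adapted to the non-elliptic coupling term $b_K(\cdot,\cdot)$, combined with an Aubin--Nitsche duality whose crucial step uses integration by parts against a smooth dual solution. Write $e_\phi:=\phi^\ast-\phi_h^\ast$, $e_\psi:=\psi^\ast-\psi_h^\ast$, $\rho_\phi:=\phi^\ast-\Pi_h\phi^\ast$, $\sigma_h:=\Pi_h\phi^\ast-\phi_h^\ast\in V_{h,k}$, $\rho_\psi:=\psi^\ast-\Pi_h\psi^\ast$, $\tau_h:=\Pi_h\psi^\ast-\psi_h^\ast\in V_{h,k}$. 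Since the constantly extended solutions satisfy $s_h(\psi^\ast,\cdot)=s_h(\phi^\ast,\cdot)=0$, subtracting \eqref{NotesAR_8} from \eqref{NotesAR_10} (with $\Gamma_h=\Gamma$) gives the two Galerkin orthogonalities
\[
m(e_\phi,\eta_h)+A(e_\psi,\eta_h)=0,\qquad A(e_\phi,\xi_h)=b_K(e_\psi,\xi_h) \quad\forall\,\eta_h,\xi_h\in V_{h,k}.
\]

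For the $A$-norm estimate I test the second orthogonality with $\xi_h=\sigma_h$. Splitting $e_\phi=\rho_\phi+\sigma_h$ and invoking the defining property $A(\rho_\phi,\sigma_h)=0$ of $\Pi_h$ gives $\|\sigma_h\|_A^2 = b_K(e_\psi,\sigma_h)\lesssim |e_\psi|_1\|\sigma_h\|_A$, hence $\|\sigma_h\|_A\lesssim|e_\psi|_1$, and the triangle inequality combined with the approximation estimate \eqref{NotesAR_14a} delivers the first claim.

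For the $L^2$-norm estimate, note first that $\sigma_h$ has zero mean: testing the first equation of \eqref{NotesAR_10} with $\eta_h\equiv 1$ forces $\int_\Gamma\phi_h^\ast\,ds=0$, while $\Pi_h\phi^\ast\in H_\ast^1(\Gamma)$ by construction. Introduce the dual Laplace--Beltrami solution $w\in H_\ast^2(\Gamma)$ satisfying $b(w,\eta)=m(\sigma_h,\eta)$ for all $\eta\in H^1(\Gamma)$, for which $\|w\|_{H^2(\Gamma)}\lesssim\|\sigma_h\|_{L^2(\Gamma)}$ by elliptic regularity. Because $\bn\cdot\nabla w=0$, the stabilization vanishes and $A(w,\sigma_h)=b(w,\sigma_h)=A(\Pi_h w,\sigma_h)$ by the definition of $\Pi_h$. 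Chaining this with symmetry of $A$, the Galerkin orthogonality $A(\rho_\phi,\Pi_h w)=0$, and the second discrete orthogonality at $\xi_h=\Pi_h w$ produces the identity
\[
\|\sigma_h\|_{L^2}^2 = b(w,\sigma_h) = A(\sigma_h,\Pi_h w) = A(e_\phi,\Pi_h w) = b_K(e_\psi,\Pi_h w).
\]

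The pivotal step is to control $b_K(e_\psi,\Pi_h w)$ optimally. Writing $e_\psi=\rho_\psi+\tau_h$ and $\Pi_h w = w+(\Pi_h w-w)$, the pieces $b_K(\rho_\psi,\Pi_h w)$ and $b_K(\tau_h,\Pi_h w-w)$ are bounded by Cauchy--Schwarz together with the Laplace--Beltrami error estimate $|\Pi_h w-w|_1\lesssim h\|w\|_{H^2}$, contributing respectively $|\rho_\psi|_1\|\sigma_h\|_{L^2}$ and $h|\tau_h|_1\|\sigma_h\|_{L^2}$. The remaining smooth piece is integrated by parts on the closed surface,
\[
b_K(\tau_h,w)=-2\int_\Gamma \tau_h\,\DivG\!\bigl((1-K)\nablaG w\bigr)\,ds \lesssim \|\tau_h\|_{L^2}\|w\|_{H^2},
\]
and Poincar\'e (valid since $\tau_h$ has zero mean) upgrades $\|\tau_h\|_{L^2}$ to $|\tau_h|_1$. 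Collecting, using $\|w\|_{H^2}\lesssim\|\sigma_h\|_{L^2}$ and cancelling, one obtains $\|\sigma_h\|_{L^2}\lesssim|\rho_\psi|_1+|\tau_h|_1\lesssim|\rho_\psi|_1+|e_\psi|_1$; the claim then follows by a triangle inequality with \eqref{NotesAR_13a} and \eqref{NotesAR_14b}. The principal obstacle throughout is the lack of ellipticity of $b_K$; the crucial mechanism is that the dualization confines the coupling term to act against the smooth dual solution $w$, where integration by parts converts the gradient of $e_\psi$ into an $L^2$-inner-product, preventing the non-optimal bound that a direct Cauchy--Schwarz on the gradients would produce.
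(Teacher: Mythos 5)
Your proof is correct. The $\|\cdot\|_A$ bound is obtained exactly as in the paper: test the second Galerkin relation with $\sigma_h=\Pi_h\phi^*-\phi_h^*$, use the projection property \eqref{NotesAR_Projection}, and apply Cauchy--Schwarz to $b_K$. For the $L^2$ bound, however, you take a genuinely different route. The paper starts from the \emph{first} Galerkin relation \eqref{NotesAR_15}: testing with $\sigma_h$ eventually produces the indefinite term $-b_K(\psi_h^*-\Pi_h\psi^*,\psi_h^*-\Pi_h\psi^*)$, which is controlled by recognizing $\psi_h^*-\Pi_h\psi^*$ as the discrete Laplace--Beltrami solution with datum $\phi^*-\phi_h^*$ and invoking the Stokes-derived estimate \eqref{NotesAR_7}; the resulting $(1-\tfrac12 c_F)\|\phi^*-\phi_h^*\|_{L^2}^2$ is then absorbed by a Young-inequality argument that works only because $1-\tfrac12 c_F<1$. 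You instead dualize: with $w\in H^2_*(\Gamma)$ solving the Laplace--Beltrami problem with datum $\sigma_h$ (well posed since you correctly verify that $\sigma_h$ has zero mean), the chain $\|\sigma_h\|_{L^2}^2=b(w,\sigma_h)=A(\Pi_h w,\sigma_h)=b_K(\psi^*-\psi_h^*,\Pi_h w)$ transfers everything to the second Galerkin relation tested against the projected dual solution, so $b_K$ is never evaluated at a pair of discrete arguments and its indefiniteness --- hence \eqref{NotesAR_7} --- plays no role in this theorem. This is simpler and gives a bound at least as sharp as the paper's. Two remarks. First, your final splitting with integration by parts is superfluous: since $|\Pi_h w|_1\lesssim\|w\|_{H^2(\Gamma)}\lesssim\|\sigma_h\|_{L^2(\Gamma)}$, a single Cauchy--Schwarz on $b_K(\psi^*-\psi_h^*,\Pi_h w)$ already closes the estimate, and your subsequent Poincar\'e step converts the $L^2$ pairing gained by integration by parts straight back into $|\tau_h|_1$, so the ``pivotal mechanism'' you describe is not what actually makes the argument work. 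Second, you apply the projection error bound $|w-\Pi_h w|_1\lesssim h\|w\|_{H^2(\Gamma)}$ to the dual solution rather than to $\psi^*,\phi^*$ as stated in Proposition~\ref{propestimates}; this is legitimate (the paper does the same for $\hat\psi,\hat\phi$ in Theorem~\ref{NotesAR_Theorem2}) but should be said explicitly. Note finally that Corollary~\ref{cor_NotesAR_2} remains indispensable upstream for the unique solvability of \eqref{NotesAR_10}, so your route does not remove it from the overall analysis.
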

\begin{proof}
It is convenient to introduce a notation for  Galerkin projection errors:
\begin{equation} \label{Galnotation}
  e_\psi:= \psi^\ast-\Pi_h\psi^\ast, ~~~ e_\phi:= \phi^\ast-\Pi_h\phi^\ast.
\end{equation}
From \eqref{NotesAR_8} and \eqref{NotesAR_10}  we obtain the following Galerkin relations:
\begin{align}
\label{NotesAR_15}
m(\phi^*-\phi_h^*,\eta_h) + b(\psi^*-\psi_h^*, \eta_h) - s_h(\psi_h^*, \eta_h) &= 0 \qquad \forall \, \eta_h \in V_{h,k},\\
\label{NotesAR_16}
b(\phi^*-\phi_h^*,\xi_h) - b_K(\psi^*-\psi_h^*, \xi_h) - s_h(\phi_h^*, \xi_h) &= 0 \qquad \forall \, \xi_h \in V_{h,k}.
\end{align}
Using the first identity \eqref{NotesAR_15} we get
\begin{align*}
m(\Pi_h \phi^*-\phi_h^*,\eta_h) &= m(\Pi_h \phi^*-\phi^*,\eta_h) + m(\phi^*-\phi_h^*,\eta_h) \\
&= m(\Pi_h \phi^*-\phi^*,\eta_h) + b(\psi_h^*-\psi^*, \eta_h) + s_h(\psi_h^*, \eta_h) \qquad \forall \, \eta_h \in V_{h,k}.
\end{align*}
Taking $\eta_h=\Pi_h \phi^*-\phi_h^*$ leads to
\begin{align}
\label{NotesAR_17}
\| \Pi_h \phi^*-\phi_h^* \|_{L^2(\Gamma)}^2 =& - m(e_\phi,\Pi_h \phi^*-\phi_h^*) \\
&+ \underbrace{b(\psi_h^*-\psi^*, \Pi_h \phi^*-\phi_h^*) + s_h(\psi_h^*, \Pi_h \phi^*-\phi_h^*)}_{=:(I)}.
\end{align}
We first consider (I). Using the projection property \eqref{NotesAR_Projection} and the second Galerkin relation \eqref{NotesAR_16} yields
\begin{align}
\notag
(I) &\overset{\eqref{NotesAR_Projection}}{=} b(\psi_h^*- \Pi_h \psi^*, \Pi_h \phi^*-\phi_h^*) - s_h(\Pi_h \psi^*, \Pi_h \phi^*-\phi_h^*) + s_h(\psi_h^*, \Pi_h \phi^*-\phi_h^*) \\
\notag
& \hspace*{2mm}=\hspace*{2mm} b( \Pi_h \phi^*-\phi_h^* , \psi_h^*- \Pi_h \psi^* ) + s_h( \Pi_h \phi^*-\phi_h^*, \psi_h^* - \Pi_h \psi^*) \\
\notag
&\overset{\eqref{NotesAR_Projection}}{=} b( \phi^*-\phi_h^* , \psi_h^*- \Pi_h \psi^* ) - s_h( \phi_h^*, \psi_h^* - \Pi_h \psi^*) \\
\notag
&\overset{\eqref{NotesAR_16}}{=} b_K(\psi^*-\psi_h^*, \psi_h^* - \Pi_h \psi^*) \\
\label{NotesAR_18}
& \hspace*{2mm}=\hspace*{2mm} b_K(e_\psi, \psi_h^* - \Pi_h \psi^*) - b_K(\psi_h^* - \Pi_h \psi^*, \psi_h^* - \Pi_h \psi^*).
\end{align}
Due to the Galerkin relation \eqref{NotesAR_15} and the projection property \eqref{NotesAR_Projection} we get
\begin{align*}
m(\phi^*-\phi_h^*,\eta_h) & = b(\psi_h^* - \psi^*, \eta_h) + s_h(\psi_h^*, \eta_h) \\
& = b(\psi_h^* - \Pi_h \psi^*, \eta_h) + s_h(\psi_h^* - \Pi_h \psi^*, \eta_h)
\end{align*}
for all $\eta_h \in V_{h,k}$. Hence, $\psi_h^* - \Pi_h \psi^*$ is the  solution of the discrete Laplace-Beltrami problem \eqref{NotesAR_5} with right-hand side $f=\phi^*-\phi_h^*$. Using \eqref{NotesAR_7} we obtain
\begin{align}
\label{NotesAR_18_help}
- b_K(\psi_h^* - \Pi_h \psi^* , \psi_h^* - \Pi_h \psi^*) \leq (1-\frac{1}{2} c_F) \| \phi^*-\phi_h^* \|^2_{L^2(\Gamma)}.
\end{align}
Combining the results \eqref{NotesAR_17} and \eqref{NotesAR_18} and using the Cauchy-Schwarz inequality and \eqref{NotesAR_18_help}, we obtain
\begin{equation} \label{eqcentral} \begin{split}
\| \Pi_h \phi^*-\phi_h^* \|_{L^2(\Gamma)}^2 =& - m(e_\phi,\Pi_h \phi^*-\phi_h^*) + b_K(e_\psi , \psi_h^* - \Pi_h \psi^*) \\
&- b_K(\psi_h^* - \Pi_h \psi^*, \psi_h^* - \Pi_h \psi^*) \\
\leq & \, \| e_\phi \|_{L^2(\Gamma)} \| \Pi_h \phi^*-\phi_h^* \|_{L^2(\Gamma)} + (1-\frac{1}{2} c_F) \| \phi^*-\phi_h^* \|^2_{L^2(\Gamma)} \\
&+ c | e_\psi|_1 | \psi_h^* - \Pi_h \psi^*|_1 \\
\leq & \, \frac{1}{2\beta} \| e_\phi\|_{L^2(\Gamma)}^2 + \frac{\beta}{2} \| \Pi_h \phi^*-\phi_h^* \|_{L^2(\Gamma)}^2 \\
&+ (1-\frac{1}{2} c_F) (1+\frac{1}{\alpha}) \| e_\phi \|^2_{L^2(\Gamma)} \\
&+ (1-\frac{1}{2} c_F) (1+\alpha) \| \Pi_h \phi^*-\phi_h^* \|^2_{L^2(\Gamma)} \\
&+ c | e_\psi|_1 | \psi_h^* - \Pi_h \psi^*|_1
\end{split}
\end{equation}
for all $\alpha, \beta > 0$ and a suitable constant $c$. We take $\alpha$ and $\beta$ such that $\frac{\beta}{2} + (1-\frac{1}{2}c_F)(1+\alpha) <1$ and then shift the term $\| \Pi_h \phi^*-\phi_h^* \|_{L^2(\Gamma)}^2$ in \eqref{eqcentral} to the left-hand side. Applying the triangle inequality $| \psi_h^* - \Pi_h \psi^*|_1 \leq  | \psi_h^* - \psi^*|_1 +  | e_\psi|_1$ yields (for $h$ sufficiently small)
\begin{align*}
\| \Pi_h \phi^*-\phi_h^* \|_{L^2(\Gamma)}^2 \lesssim & \, \| e_\phi \|_{L^2(\Gamma)}^2 + | e_\psi|_1^2 + | e_\psi|_1 | \psi^* - \psi_h^*|_1 \\
  \lesssim & \| e_\phi \|_{L^2(\Gamma)}^2 + | e_\psi|_1^2 +  | \psi^* - \psi_h^*|_1^2.
\end{align*}
With the projection error bounds \eqref{NotesAR_13a} and \eqref{NotesAR_14b} we get
\begin{align*}
\| \Pi_h \phi^*-\phi_h^* \|_{L^2(\Gamma)} \lesssim
& \, h^{s_2} \| \phi^* \|_{H^{s_2}(\Gamma)} + h^{s_1-1} \| \psi^* \|_{H^{s_1}(\Gamma)} + | \psi^* - \psi_h^*|_1.
\end{align*}
Combining this with $\| \phi^* - \phi_h^* \|_{L^2(\Gamma)} \leq \| e_\phi\|_{L^2(\Gamma)} + \| \Pi_h \phi^* - \phi_h^* \|_{L^2(\Gamma)} $ and the projection error bound \eqref{NotesAR_14b} we obtain the  bound for the error $\| \phi^* - \phi_h^* \|_{L^2(\Gamma)}$ .

For deriving the bound for $\| \phi^* - \phi_h^* \|_A$ we start with the second Galerkin relation \eqref{NotesAR_16} and use the projection property \eqref{NotesAR_Projection}, which yields
\[
 b(\phi^*-\phi_h^*,\xi_h) - s_h(\phi_h^*, \xi_h) = b_K(\psi^*-\psi_h^*, \xi_h),
\]
hence, 
\[ b(\Pi_h \phi^*-\phi_h^*,\xi_h) + s_h(\Pi_h \phi^* - \phi_h^*, \xi_h) = b_K(\psi^*-\psi_h^*, \xi_h) \quad \text{for all}~\xi_h \in V_{h,k}.
\]
Taking $\xi_h=\Pi_h \phi^*-\phi_h^*$ yields
\begin{align*}
\| \Pi_h \phi^* - \phi_h^* \|_A^2 = b_K(\psi^*-\psi_h^*, \Pi_h \phi^*-\phi_h^*) \lesssim |\psi^*-\psi_h^*|_1 |\Pi_h \phi^*-\phi_h^*|_1.
\end{align*}
Using $|\Pi_h \phi^*-\phi_h^*|_1 \leq \|\Pi_h \phi^*-\phi_h^*\|_A$ we conclude 
\begin{align*}
\| \Pi_h \phi^* - \phi_h^* \|_A \lesssim |\psi^*-\psi_h^*|_1.
\end{align*}
Combining this with $\| \phi^* - \phi_h^* \|_A \leq \| e_\phi \|_A + \| \Pi_h \phi^* - \phi_h^* \|_A$ and the projection error bound \eqref{NotesAR_14a} leads to the desired error bound.
\end{proof}
\ \\[1ex]
\begin{remark} \rm 
In the  proof above it is essential that for $\gamma:=\frac{\beta}{2} + (1-\frac{1}{2}c_F)(1+\alpha)$ we have the bound $\gamma < 1$, cf. \eqref{eqcentral}. 
For this to hold it is essential that in the estimate \eqref{NotesAR_7} we have a constant $1-\frac{1}{2} c_F <1$.
\end{remark}
\ \\[1ex]

\subsubsection{Bound for the error $\psi^* - \psi_h^*$} \label{secterrB}
In the next theorem an error bound for the error $| \psi^* - \psi_h^* |_1$ is derived.
\begin{theorem}
\label{NotesAR_Theorem2}  Let $(\psi^\ast,\phi^\ast)$ and $(\psi_h^\ast,\phi_h^\ast)$ be the solutions of \eqref{NotesAR_8} and \eqref{NotesAR_10} {\rm(}with $\Gamma_h=\Gamma${\rm)}, respectively.
The following estimate holds:
\begin{align*}
| \psi^* - \psi_h^* |_1 &\lesssim h^{s_1-1} \| \psi^* \|_{H^{s_1}(\Gamma)} + h \| \phi^* - \phi_h^* \|_{L^2(\Gamma)} + h^{\min \{ 2, k \} } \| \phi^* - \phi_h^* \|_A,
\end{align*}
with $s_1$ as in \eqref{defs1s2}.
\end{theorem}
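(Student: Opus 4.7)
I would follow the strategy of Falk~\cite{Falk1980}, adapted to the surface setting. First, split
\[
|\psi^*-\psi_h^*|_1 \le |\psi^*-\Pi_h\psi^*|_1 + |e_h|_1, \qquad e_h := \Pi_h\psi^*-\psi_h^*;
\]
the projection term contributes $h^{s_1-1}\|\psi^*\|_{H^{s_1}(\Gamma)}$ by \eqref{NotesAR_13a}, matching the first target term. For $e_h$, subtracting \eqref{NotesAR_15} from the identity \eqref{NotesAR_Projection} satisfied by $\Pi_h\psi^*$ and using $s_h(\psi^*,\cdot)=0$ (constant normal extension) gives the key Galerkin identity
\[
A(e_h,\eta_h) = -m(\Phi,\eta_h) \quad \text{for all }\eta_h\in V_{h,k}, \qquad \Phi := \phi^*-\phi_h^*.
\]
Testing \eqref{NotesAR_15} with $\eta_h\equiv 1$ shows $\int_\Gamma\Phi\,ds =0$, and by construction $\int_\Gamma e_h\,ds =0$. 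Analogously, \eqref{NotesAR_16} together with $s_h(\phi^*,\cdot)=0$ yields $A(\Phi,\xi_h)=b_K(\psi^*-\psi_h^*,\xi_h)$ for all $\xi_h\in V_{h,k}$. A direct test $\eta_h=e_h$ in the first identity only produces $\|e_h\|_A\lesssim\|\Phi\|_{L^2}$ (no $h$-gain), so a duality step is required.

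The core is an Aubin–Nitsche argument. I would introduce the auxiliary Laplace–Beltrami problem: find $\zeta\in H^2(\Gamma)\cap H^1_*(\Gamma)$ with $b(\zeta,\eta)=m(\Phi,\eta)$ for all $\eta\in H^1(\Gamma)$. Elliptic regularity gives $\|\zeta\|_{H^2(\Gamma)}\lesssim\|\Phi\|_{L^2(\Gamma)}$ and, since $\Phi\in H^1(\Gamma)$ with zero mean, additionally $\|\zeta\|_{H^3(\Gamma)}\lesssim \|\Phi\|_A$. Then
\[
\|e_h\|_A^2 = -m(\Phi,e_h) = -b(\zeta,e_h) = -b(\zeta-\Pi_h\zeta,e_h) - b(\Pi_h\zeta,e_h).
\]
For the first summand, $|\zeta-\Pi_h\zeta|_1\lesssim h\|\zeta\|_{H^2}$ (valid for every $k\geq 1$) yields the $h\|\Phi\|_{L^2}$ factor, while the sharper $|\zeta-\Pi_h\zeta|_1\lesssim h^{\min\{2,k\}}\|\zeta\|_{H^{\min\{2,k\}+1}}$ (available when $k\geq 2$) yields the $h^{\min\{2,k\}}\|\Phi\|_A$ factor. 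The second summand $b(\Pi_h\zeta,e_h)$ is reduced by the Galerkin identity with $\eta_h=\Pi_h\zeta$ and the symmetry of $A$; combined with the identity for $A(\Phi,\Pi_h\zeta)$ and integration by parts against the regularity of $\zeta$, it contributes terms of the same two types together with a self-referential $h\,|\psi^*-\psi_h^*|_1$ that is absorbed into the left-hand side for $h$ sufficiently small. Using $|e_h|_1\le\|e_h\|_A$, Young's inequality, and Poincaré $\|e_h\|_{L^2}\lesssim|e_h|_1$ collects everything into the stated bound.

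\textbf{Main obstacle.} The main difficulty is the coupling term $b_K(\psi^*-\psi_h^*,\cdot)$, which is absent in Falk's Euclidean Ciarlet–Raviart setting. Because $K$ can change sign on $\Gamma$, this term has no ellipticity and threatens to reintroduce $|\psi^*-\psi_h^*|_1$ on the right-hand side in a non-absorbable way every time the second Galerkin identity is used. One must carefully integrate by parts against the $H^2$ (or $H^3$) regularity of the dual solution $\zeta$, so that each occurrence of $b_K$ contributes either an absorbable $h\,|\psi^*-\psi_h^*|_1$ or terms already of the desired form. The cap at $\min\{2,k\}$ in the exponent on $\|\Phi\|_A$ reflects the $H^3$ regularity limit of $\zeta$ under Assumption~\ref{ass} ($\Gamma\in C^3$), so no further asymptotic improvement from $\Pi_h\zeta$ is available when $k>2$.
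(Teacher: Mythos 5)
Your setup is fine up to and including the Galerkin identity $A(e_h,\eta_h)=-m(\Phi,\eta_h)$ with $e_h=\Pi_h\psi^*-\psi_h^*$ and $\Phi=\phi^*-\phi_h^*$, but the core duality step does not close. The gap is in how you treat the second summand $-b(\Pi_h\zeta,e_h)$. Using the projection property \eqref{NotesAR_Projection} and the symmetry of $A$ you get $-b(\Pi_h\zeta,e_h)=m(\Phi,\Pi_h\zeta)+s_h(\Pi_h\zeta,e_h)$, and writing $m(\Phi,\Pi_h\zeta)=m(\Phi,\zeta)+m(\Phi,\Pi_h\zeta-\zeta)$ the first piece equals $b(\zeta,\zeta)=\|\nablaG\zeta\|_{L^2(\Gamma)}^2\sim\|\Phi\|_{H^{-1}(\Gamma)}^2$, which enters with a plus sign and carries \emph{no} factor of $h$. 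Put differently, your identity says that $e_h$ is (up to stabilization) the discrete solution of a Laplace--Beltrami problem with right-hand side $-\Phi$, so $\|e_h\|_A\approx\|\Phi\|_{H^{-1}(\Gamma)}+O(h\|\Phi\|_{L^2(\Gamma)})$, and for a general $L^2$ function $\Phi$ the dual norm $\|\Phi\|_{H^{-1}(\Gamma)}$ is not bounded by $h\|\Phi\|_{L^2(\Gamma)}+h^{\min\{2,k\}}\|\Phi\|_A$. The ``self-referential $h\,|\psi^*-\psi_h^*|_1$'' you hope to absorb does not appear with a factor $h$: if you try to estimate $\|\Phi\|_{H^{-1}}$ via \eqref{NotesAR_15} you obtain $|\psi^*-\psi_h^*|_1$ with constant one, which cannot be absorbed into the left-hand side. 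So the scalar Aubin--Nitsche argument with $\zeta$ solving $b(\zeta,\eta)=m(\Phi,\eta)$ is the wrong dual problem; no amount of integration by parts against the $H^2$/$H^3$ regularity of $\zeta$ repairs this, because the obstruction sits in the data $\Phi$, not in the approximation of $\zeta$.

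The paper's proof dualizes instead with the \emph{full coupled system} \eqref{NotesAR_8}, choosing the data $\blf=-\tfrac12\RotG(\psi^*-\psi_h^*)$, so that testing the second dual equation with $\xi=\psi^*-\psi_h^*$ gives exactly $|\psi^*-\psi_h^*|_1^2=b(\hat\phi,\psi^*-\psi_h^*)-b_K(\hat\psi,\psi^*-\psi_h^*)$. The decisive point is the regularity of Lemma~\ref{lemreformulation} for this dual pair, $\|\hat\phi\|_{H^1(\Gamma)}\lesssim|\psi^*-\psi_h^*|_1$ and $\|\hat\psi\|_{H^3(\Gamma)}\lesssim|\psi^*-\psi_h^*|_1$: after rewriting $b(\hat\phi,\psi^*-\psi_h^*)$ with the Galerkin relations \eqref{NotesAR_15}--\eqref{NotesAR_16} and \eqref{NotesAR_Projection}, the term $\|\Phi\|_{L^2}$ is paired with $\|\hat\phi-\Pi_h\hat\phi\|_{L^2}\lesssim h\,|\psi^*-\psi_h^*|_1$ and the term $\|\Phi\|_A$ with $\|\hat\psi-\Pi_h\hat\psi\|_A\lesssim h^{\min\{2,k\}}|\psi^*-\psi_h^*|_1$; the two $b_K$ contributions combine by symmetry into $-b_K(\psi^*-\psi_h^*,\hat\psi-\Pi_h\hat\psi)$, which yields the genuinely absorbable $h^{\min\{2,k\}}|\psi^*-\psi_h^*|_1^2$. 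In short, the extra powers of $h$ come from the projection errors of the \emph{dual} solution of the fourth-order problem, not from any smallness of $\Phi$ itself; that is the idea missing from your argument.
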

\begin{proof}
Take $g(\xi)= -2 \int_{\Gamma} \blf \cdot \RotG \xi \, ds$ with  $\blf:=-\frac{1}{2} \RotG \left( \psi^* - \psi_h^* \right)$ in the problem \eqref{NotesAR_8}. The corresponding unique solution, denoted by $\hat \psi$, $\hat \phi$, satisfies
\begin{align}
\label{NotesAR_19b}
m(\hat \phi,\eta) + b(\hat \psi, \eta) &= 0 \quad \text{for all}~ \eta \in H^1(\Gamma),\\
\label{NotesAR_19c}
b(\hat \phi,\xi) - b_K(\hat \psi, \xi) &= \int_{\Gamma} \RotG \left( \psi^* - \psi_h^* \right) \cdot \RotG \xi \, ds. \quad \text{for all} ~ \xi \in H^1(\Gamma),
\end{align}
and the regularity estimates
\begin{align}
\label{NotesAR_20a}
\| \hat \psi \|_{H^{3}(\Gamma)} \lesssim \| \blf \|_{L^2(\Gamma)} \lesssim | \psi^* - \psi_h^* |_1, \\
\label{NotesAR_20b}
\| \hat \phi \|_{H^{1}(\Gamma)} \lesssim \| \blf \|_{L^2(\Gamma)} \lesssim | \psi^* - \psi_h^* |_1.
\end{align}
Again the solutions $\hat \psi$ and $\hat \phi$ are extended constantly along normals. We use notation as in \eqref{Galnotation} and introduce for a better readability the following Galerkin projection errors:
\begin{equation*}
  \hat e_\psi:= \hat \psi - \Pi_h \hat \psi, ~~~ \hat e_\phi:= \hat \phi - \Pi_h \hat \phi.
\end{equation*}
Taking $\xi=\psi^* - \psi_h^* \in H^1(\Gamma)$ in \eqref{NotesAR_19c} yields
\begin{align}
\label{NotesAR_21}
|\psi^* - \psi_h^*|_1^2 = b(\hat \phi,\psi^* - \psi_h^*) - b_K(\hat \psi, \psi^* - \psi_h^*).
\end{align}
We rewrite the first term on the right-hand side of \eqref{NotesAR_21} with the help of the Galerkin relations \eqref{NotesAR_15}, \eqref{NotesAR_16}, the first equation of \eqref{NotesAR_19b} and the projection property \eqref{NotesAR_Projection}:
\begin{align*}
b(\hat \phi,\psi^* - \psi_h^*) & \hspace*{2mm}=\hspace*{2mm} b(\hat e_\phi ,\psi^* - \psi_h^*) + b(\Pi_h \hat \phi,\psi^* - \psi_h^*) \\
& \hspace*{2mm}=\hspace*{2mm} b(\hat e_\phi , e_\psi) + b(\hat e_\phi , \Pi_h \psi^* - \psi_h^*) + b(\Pi_h \hat \phi,\psi^* - \psi_h^*) \\
&\overset{\eqref{NotesAR_Projection}}{=} b(\hat e_\phi , e_\psi) + s_h( \Pi_h \hat \phi , \Pi_h \psi^* - \psi_h^*) + b(\Pi_h \hat \phi,\psi^* - \psi_h^*) \\
&\overset{\eqref{NotesAR_15}}{=} b(\hat e_\phi , e_\psi) + s_h( \Pi_h \hat \phi , \Pi_h \psi^* - \psi_h^*) +s_h(\Pi_h \hat \phi, \psi_h^*) \\
& \hspace*{8mm} - m(\phi^* -\phi_h^*,\Pi_h \hat \phi) \\
& \hspace*{2mm}=\hspace*{2mm} b(\hat e_\phi , e_\psi) + s_h( \Pi_h \hat \phi , \Pi_h \psi^*) - m(\phi_h^* -\phi^*,\hat e_\phi) \\
& \hspace*{8mm} + m(\phi_h^* -\phi^*, \hat \phi) \\
&\overset{\eqref{NotesAR_19b}}{=}  b(\hat e_\phi , e_\psi) + s_h( \Pi_h \hat \phi , \Pi_h \psi^*) - m(\phi_h^* -\phi^*, \hat e_\phi) \\
& \hspace*{8mm} + b(\phi^* -\phi_h^*, \hat \psi) \\
& \hspace*{2mm}=\hspace*{2mm} b(\hat e_\phi , e_\psi) + s_h( \Pi_h \hat \phi , \Pi_h \psi^*) - m(\phi_h^* -\phi^*, \hat e_\phi) \\
& \hspace*{8mm} + b(\phi^* -\phi_h^*, \hat e_\psi) + b(\phi^* -\phi_h^*, \Pi_h \hat \psi) \\
&\overset{\eqref{NotesAR_16}}{=} b(\hat e_\phi , e_\psi) + s_h( \Pi_h \hat \phi , \Pi_h \psi^*) - m(\phi_h^* -\phi^*, \hat e_\phi ) \\
& \hspace*{8mm} + b(\phi^* -\phi_h^*, \hat e_\psi ) + b_K(\psi^* -\psi_h^*, \Pi_h \hat \psi) + s_h(\phi_h^*, \Pi_h \hat \psi).
\end{align*}
With this we can conclude from \eqref{NotesAR_21}
\begin{align*}
|\psi^* - \psi_h^*|_1^2 =& \, b(\hat e_\phi , e_\psi) + s_h( \Pi_h \hat \phi , \Pi_h \psi^*) - m(\phi_h^* -\phi^*, \hat e_\phi) + b(\phi^* -\phi_h^*, \hat e_\psi ) \\
& + s_h(\phi_h^*, \Pi_h \hat \psi) - b_K(\psi^* -\psi_h^*, \hat e_\psi ) \\
\leq & \, \| \hat e_\phi \|_A \| e_\psi \|_A + \| \phi_h^* -\phi^* \|_{L^2(\Gamma)} \| \hat e_\phi \|_{L^2(\Gamma)} + \| \phi^* -\phi_h^* \|_A \| \hat e_\psi \|_A \\
& + c | \psi^* -\psi_h^*|_1 | \hat e_\psi |_1.
\end{align*}
Using the projection error bounds \eqref{NotesAR_13a}, \eqref{NotesAR_14a} and \eqref{NotesAR_14b} in combination with the regularity estimates \eqref{NotesAR_20a} and \eqref{NotesAR_20b} yields
\begin{align*}
|\psi^* - \psi_h^*|_1^2 \lesssim & \, \| e_\psi \|_A | \psi^* -\psi_h^*|_1 + h \| \phi_h^* -\phi^* \|_{L^2(\Gamma)} | \psi^* -\psi_h^*|_1 \\
& + h^{\min \{2,k\} } \| \phi^* -\phi_h^* \|_A | \psi^* -\psi_h^*|_1 + h^{\min \{2,k\} } | \psi^* -\psi_h^*|_1^2.
\end{align*}
The last term can be shifted to the left-hand side since $h$ is sufficiently small and $k \geq 1$. Applying the projection error bound \eqref{NotesAR_13a} leads to the claimed estimate of the theorem.
\end{proof}
\ \\[1ex]

\subsubsection{Discretization error bounds} \label{secterrC}
Combining the results in Theorem \ref{NotesAR_Theorem1} and Theorem \ref{NotesAR_Theorem2} we obtain discretization error bounds.
\begin{theorem}
\label{thmDiscretizationErrorStreamfunction} Let $(\psi^\ast,\phi^\ast)$ and $(\psi_h^\ast,\phi_h^\ast)$ be the solutions of \eqref{NotesAR_8} and \eqref{NotesAR_10} {\rm(}with $\Gamma_h=\Gamma${\rm)}, respectively. Let $m \geq 3$ be such that  
 $\psi^* \in H^1_*(\Gamma) \cap H^m(\Gamma)$ and $\phi^* \in H^{m-2}(\Gamma)$.  With $s_1=\min \{m, k+1\}$, $s_2=\min \{m-2, k+1\}$ and $k^*:=0$ if $k=1$ and $k^*:=1$ if $k \geq 2$ the following error bounds hold:
\begin{align}
\label{NotesAR_22}
| \psi^* - \psi_h^* |_1 &\lesssim h^{s_1-1} \| \psi^* \|_{H^{s_1}(\Gamma)} + h^{s_2+k^*} \| \phi^* \|_{H^{s_2}(\Gamma)}, \\
\label{NotesAR_23b}
\| \phi^* - \phi_h^* \|_A &\lesssim h^{s_1-1} \| \psi^* \|_{H^{s_1}(\Gamma)} + h^{s_2-1} \| \phi^* \|_{H^{s_2}(\Gamma)}, \\
\label{NotesAR_23a}
\| \phi^* - \phi_h^* \|_{L^2(\Gamma)} &\lesssim  h^{s_1-1} \| \psi^* \|_{H^{s_1}(\Gamma)} + h^{s_2} \| \phi^* \|_{H^{s_2}(\Gamma)}.
\end{align}

In particular, we have the following result for $k \geq 2$ and $\psi^* \in H^{k+1}(\Gamma)$:
\begin{align}
\label{NotesAR_22_kgeq2}
| \psi^* - \psi_h^* |_1 &\lesssim h^{k} \| \psi^* \|_{H^{k+1}(\Gamma)} + h^{k} \| \phi^* \|_{H^{k-1}(\Gamma)} \lesssim h^{k} \| \psi^* \|_{H^{k+1}(\Gamma)}.
\end{align}

\end{theorem}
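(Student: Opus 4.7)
The plan is to combine the two preceding theorems and close a bootstrap. Theorem~\ref{NotesAR_Theorem1} bounds $\|\phi^\ast-\phi_h^\ast\|_{L^2(\Gamma)}$ and $\|\phi^\ast-\phi_h^\ast\|_A$ in terms of $|\psi^\ast-\psi_h^\ast|_1$ plus projection errors, while Theorem~\ref{NotesAR_Theorem2} bounds $|\psi^\ast-\psi_h^\ast|_1$ in terms of $\|\phi^\ast-\phi_h^\ast\|_{L^2(\Gamma)}$ and $\|\phi^\ast-\phi_h^\ast\|_A$, each with an extra factor $h$, respectively $h^{\min\{2,k\}}$. Since both factors are at least $h$, for $h$ sufficiently small we can absorb $|\psi^\ast-\psi_h^\ast|_1$ on the right-hand side back into the left-hand side after substitution.

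Concretely, first I would substitute the bounds from Theorem~\ref{NotesAR_Theorem1} into the bound of Theorem~\ref{NotesAR_Theorem2}, giving
\[
| \psi^\ast - \psi_h^\ast |_1 \lesssim h^{s_1-1} \| \psi^\ast \|_{H^{s_1}(\Gamma)} + h^{s_2+1}\|\phi^\ast\|_{H^{s_2}(\Gamma)} + h^{s_2-1+\min\{2,k\}}\|\phi^\ast\|_{H^{s_2}(\Gamma)} + (h+h^{\min\{2,k\}})\,|\psi^\ast-\psi_h^\ast|_1.
\]
Taking $h$ sufficiently small allows the last term to be absorbed. Distinguishing $k=1$ (where $\min\{2,k\}=1$) from $k\geq 2$ (where $\min\{2,k\}=2$) gives exponent $s_2+k^\ast$ on the $\|\phi^\ast\|_{H^{s_2}(\Gamma)}$ term, which is exactly \eqref{NotesAR_22}. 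Second, plugging this estimate of $|\psi^\ast-\psi_h^\ast|_1$ back into the two bounds of Theorem~\ref{NotesAR_Theorem1} and noting that in both cases the contribution $h^{s_2+k^\ast}\|\phi^\ast\|_{H^{s_2}(\Gamma)}$ is of higher order in $h$ than the leading $\phi^\ast$-term already present ($h^{s_2-1}$ in the $A$-norm estimate, $h^{s_2}$ in the $L^2$ estimate), yields \eqref{NotesAR_23b} and \eqref{NotesAR_23a}.

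Finally, for the special case \eqref{NotesAR_22_kgeq2}, I would specialize $k\geq 2$ and use $\psi^\ast\in H^{k+1}(\Gamma)$, so $s_1=k+1$ and $s_1-1=k$. Under the minimal regularity $m=k+1$ compatible with the hypothesis, we have $s_2=k-1$ and $k^\ast=1$, so $s_2+k^\ast=k$, and the two terms on the right of \eqref{NotesAR_22} have the same order $h^k$. Using the regularity estimate $\|\phi^\ast\|_{H^{k-1}(\Gamma)}\lesssim \|\psi^\ast\|_{H^{k+1}(\Gamma)}$ (which follows from $\phi^\ast=\Delta_\Gamma\psi^\ast$) gives the claimed single-term bound.

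The only real content is the absorption step; the rest is bookkeeping on the exponents $s_1,s_2,k^\ast$. The main (minor) obstacle is checking that after substitution every term involving the unknown $|\psi^\ast-\psi_h^\ast|_1$ on the right carries a factor of at least $h$, so the hidden constants can be collected into a single threshold $h\leq h_0$ beyond which absorption is valid; this is guaranteed by the $h$ and $h^{\min\{2,k\}}$ prefactors appearing in Theorem~\ref{NotesAR_Theorem2}.
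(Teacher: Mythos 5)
Your proposal is correct and follows exactly the paper's own argument: insert the two bounds of Theorem~\ref{NotesAR_Theorem1} into Theorem~\ref{NotesAR_Theorem2}, absorb the resulting $h\,|\psi^\ast-\psi_h^\ast|_1$ term for $h$ small, and then back-substitute \eqref{NotesAR_22} into Theorem~\ref{NotesAR_Theorem1}, with the same exponent bookkeeping via $k^\ast$ and the same use of $\|\phi^\ast\|_{H^{k-1}(\Gamma)}=\|\Delta_\Gamma\psi^\ast\|_{H^{k-1}(\Gamma)}\leq\|\psi^\ast\|_{H^{k+1}(\Gamma)}$ for \eqref{NotesAR_22_kgeq2}.
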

\begin{proof}
For the first result we start with the bound in Theorem \ref{NotesAR_Theorem2} and insert the estimates of Theorem \ref{NotesAR_Theorem1}:
\begin{align*}
| \psi^* - \psi_h^* |_1 \lesssim & \,  h^{s_1-1} \| \psi^* \|_{H^{s_1}(\Gamma)} + h \left( h^{s_2} \| \phi^* \|_{H^{s_2}(\Gamma)} + h^{s_1-1} \| \psi^* \|_{H^{s_1}(\Gamma)} + | \psi^* - \psi_h^* |_1 \right) \\
&+ h^{\min \{ 2, k \} } \left( h^{s_2-1} \| \phi^* \|_{H^{s_2}(\Gamma)} + | \psi^* - \psi_h^* |_1 \right) \\
\lesssim & \,  h^{s_1-1} \| \psi^* \|_{H^{s_1}(\Gamma)} + h^{s_2 + k^*} \| \phi^* \|_{H^{s_2}(\Gamma)} + h | \psi^* - \psi_h^* |_1.
\end{align*}
For $h$ sufficiently small we can shift the term $h | \psi^* - \psi_h^* |_1$ to the left-hand side, which leads to the desired error bound. 

The second and the third result are obtained by inserting the estimate \eqref{NotesAR_22} into the first and the second error bound of Theorem \ref{NotesAR_Theorem1}. The discretization error bound \eqref{NotesAR_22_kgeq2} follows  from \eqref{NotesAR_22} and $\|\phi^*\|_{H^{k-1}(\Gamma)} = \| \DeltaG \psi^*\|_{H^{k-1}(\Gamma)} \leq \|\psi^*\|_{H^{k+1}(\Gamma)}$.
\end{proof}
\ \\[1ex]

\begin{remark} \rm We discuss  the main results  \eqref{NotesAR_22}--\eqref{NotesAR_23a}.
The bound \eqref{NotesAR_22} is optimal for all $k \geq 2$, cf.  \eqref{NotesAR_22_kgeq2}. For the case $k=1$ we obtain 
\begin{equation} \label{AAAR}
| \psi^* - \psi_h^* |_1 \lesssim h \| \psi^* \|_{H^{2}(\Gamma)} + h \| \phi^* \|_{H^{1}(\Gamma)} \lesssim h \| \psi^* \|_{H^{3}(\Gamma)}.
\end{equation}
We note that for $k=1$  the analysis in  \cite{Falk1980} does \emph{not} yield an optimal order estimate of the form $|\psi^\ast - \psi_h^\ast|_1 \leq ch$. The result \eqref{AAAR} is optimal w.r.t. the order of convergence in $h$, but suboptimal in the sense that it involves the smoothness term $\| \psi^* \|_{H^{3}(\Gamma)}$ instead of the optimal $\| \psi^* \|_{H^{2}(\Gamma)}$. This is caused by the fact that we need a minimal regularity $\bu^\ast \in H^2(\Gamma)^3$, i.e., $\psi^\ast \in H^3(\Gamma)$.  The improvement of our result (for $k=1$) compared to \cite{Falk1980} is probably due to the fact that in our case all bilinear forms are symmetric, which is not assumed in the  framework presented in \cite{Falk1980}.

The second error bound \eqref{NotesAR_23b} is optimal if we have sufficient smoothness:  for the case $m \geq k+3$, we obtain the optimal error bound
\begin{align}
\label{NotesAR_23b_high_regularity}
\| \phi^* - \phi_h^* \|_A &\lesssim h^{k} \big( \| \psi^* \|_{H^{k+1}(\Gamma)} + \| \phi^* \|_{H^{k+1}(\Gamma)} \big).
\end{align} 
We note that the results in  \cite{Falk1980} do \emph{not} yield  optimal error bounds for $ \phi^* - \phi_h^*$ (even not under strong smoothness assumptions). 
Assuming minimal regularity, i.e.,   $m=3$, we obtain
\[
 \| \phi^* - \phi_h^* \|_A \lesssim h^k \| \psi^* \|_{H^{k+1}(\Gamma)} + \| \phi^* \|_{H^{1}(\Gamma)}  \quad\text{for}~k=1,2,
\]
which is not optimal.

The third error bound \eqref{NotesAR_23a} is suboptimal. For the case with strong smoothness assumptions ($m \geq k+3$) we  obtain the error bound
\begin{align*}
\| \phi^* - \phi_h^* \|_{L^2(\Gamma)} &\lesssim h^{k} \| \psi^* \|_{H^{k+1}(\Gamma)} + h^{k+1} \| \phi^* \|_{H^{k+1}(\Gamma)}, 
\end{align*}
which is one order lower than the optimal $h^{k+1}$ bound. 
\end{remark}
\ \\[1ex]

\section{Error analysis for the reconstruction of velocity and pressure}\label{SectPressureVelocity}
In this section we derive discretization error bounds for the discrete reconstructions $\bu_h$, $p_h$ of the velocity $\bu^\ast$ and $p^\ast$, \emph{given the discrete stream function  $\psi_h^*$}, which is the solution of \eqref{NotesAR_10}. With $\psi^*$ we denote the unique stream function of $\bu^*$. Again we make the simplifying assumption $\Gamma_h=\Gamma$.
\subsection{Error analysis for velocity reconstruction} 
The velocity reconstruction is based on the variational problem \eqref{Variationsformulierung_u}. For the discretization we consider the case with $\Gamma_h=\Gamma$, i.e. (cf. \eqref{Variationsformulierung_u_diskret}): 
Determine $\bu_h \in \left( V_{h,k_u} \right)^3$ such that
\begin{equation}
\label{Variationsformulierung_u_diskretA} 
\begin{split}
  M_h(\bu_h , \bv_h) & : = \int_{\Gamma} \bu_h \cdot \bv_h \, ds + s_h(\bu_h,\bv_h) = \int_{\Gamma}  \bg_h \cdot \bv_h \,ds \quad \forall~\bv_h \in \left( V_{h,k_u} \right)^3, \\
 \text{with}~ & ~s_h(\bu_h,\bv_h)= \rho_u \int_{ \Omega^\Gamma_h} \left( \nabla \bu \bn\right) \cdot \left( \nabla \bu \bn\right)  dx, ~ ~ \bg_h:= (\tilde{\bn}_h \times \nablaG \psi_h^*),
\end{split}
\end{equation} 
and $\psi_h^\ast \in V_{h,k}$ the solution of \eqref{NotesAR_10}.
We denote the unique solution of \eqref{Variationsformulierung_u_diskretA} by $\bu_h^*$. The exact velocity solution $\bu^\ast$ (extended constantly along normals) satisfies, cf.  \eqref{Variationsformulierung_u},
\[ \begin{split}
  M_h(\bu^\ast, \bv_h) & =\int_{\Gamma}  \bg \cdot \bv_h \,ds \quad \forall~\bv_h \in \left( V_{h,k_u} \right)^3, \\
\text{with}~~ \bg & = ( {\bn} \times \nablaG \psi^*).
\end{split} \]
Due to \eqref{lem_NotesAR_3_help}, $\|\cdot\|_M:=M_h(\cdot,\cdot)^\frac12$ defines a norm on $\left( V_{h,k_u} \right)^3$.  Using a standard Strang argument we obtain the following result.
\begin{lemma}
Let $\bu_h^*$ be the unique solution of \eqref{Variationsformulierung_u_diskretA}. The following error estimate holds:
\begin{equation} \label{errStrang}
\|\bu^\ast - \bu_h^*\|_M \leq 2 \min_{\bv_h \in \left( V_{h,k_u} \right)^3} \|\bu^\ast - \bv_h\|_M +  \|\bg - \bg_h\|_{L^2(\Gamma)}.
\end{equation}
\end{lemma}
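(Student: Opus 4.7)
The plan is to run a standard first Strang-type lemma, noting that the only inconsistencies enter through the right-hand side data perturbation $\bg - \bg_h$ (the stabilization term gives no consistency error because $\bu^\ast$ is extended constantly along normals).

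First I would establish a Galerkin-orthogonality-with-perturbation identity. Because $\bu^\ast$ is extended constantly along normals, $\nabla \bu^\ast\,\bn = 0$ on $\Omega^\Gamma_h$, so $s_h(\bu^\ast,\bv_h)=0$ for every $\bv_h \in (V_{h,k_u})^3$. Combined with the continuous equation $\int_\Gamma \bu^\ast\cdot\bv_h\,ds = \int_\Gamma \bg\cdot\bv_h\,ds$ (which is \eqref{Variationsformulierung_u} tested against discrete $\bv_h$), this gives
\begin{equation*}
M_h(\bu^\ast,\bv_h) = \int_\Gamma \bg\cdot\bv_h\,ds \qquad \forall\,\bv_h\in(V_{h,k_u})^3.
\end{equation*}
Subtracting the definition \eqref{Variationsformulierung_u_diskretA} of $\bu_h^\ast$ yields the perturbed orthogonality
\begin{equation*}
M_h(\bu^\ast-\bu_h^\ast,\bv_h) = \int_\Gamma (\bg-\bg_h)\cdot\bv_h\,ds \qquad \forall\,\bv_h\in(V_{h,k_u})^3.
\end{equation*}

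Next, for an arbitrary $\bv_h \in (V_{h,k_u})^3$, I would split $\bu^\ast-\bu_h^\ast = (\bu^\ast-\bv_h)+(\bv_h-\bu_h^\ast)$ and bound the discrete piece via
\begin{equation*}
\|\bv_h-\bu_h^\ast\|_M^2 = M_h(\bv_h-\bu^\ast,\bv_h-\bu_h^\ast) + M_h(\bu^\ast-\bu_h^\ast,\bv_h-\bu_h^\ast).
\end{equation*}
The first term is handled by Cauchy--Schwarz in the $\|\cdot\|_M$ inner product; the second is the perturbed orthogonality applied with test function $\bv_h-\bu_h^\ast$, giving $\int_\Gamma(\bg-\bg_h)\cdot(\bv_h-\bu_h^\ast)\,ds$, which I estimate by $\|\bg-\bg_h\|_{L^2(\Gamma)}\|\bv_h-\bu_h^\ast\|_{L^2(\Gamma)} \le \|\bg-\bg_h\|_{L^2(\Gamma)}\|\bv_h-\bu_h^\ast\|_M$ since $\|\cdot\|_{L^2(\Gamma)} \le \|\cdot\|_M$ on $(V_{h,k_u})^3$. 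Dividing by $\|\bv_h-\bu_h^\ast\|_M$ gives
\begin{equation*}
\|\bv_h-\bu_h^\ast\|_M \le \|\bu^\ast-\bv_h\|_M + \|\bg-\bg_h\|_{L^2(\Gamma)}.
\end{equation*}
Then the triangle inequality $\|\bu^\ast-\bu_h^\ast\|_M \le \|\bu^\ast-\bv_h\|_M + \|\bv_h-\bu_h^\ast\|_M$ gives the bound $2\|\bu^\ast-\bv_h\|_M + \|\bg-\bg_h\|_{L^2(\Gamma)}$, and taking the infimum over $\bv_h$ produces \eqref{errStrang}.

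There is no real obstacle; the only point that requires care is verifying that $s_h(\bu^\ast,\bv_h)=0$ so that the stabilization contributes nothing to the consistency error. Once that is observed, the proof is a clean two-line Strang argument: perturbed Galerkin orthogonality $\Rightarrow$ bound on $\|\bv_h-\bu_h^\ast\|_M$ $\Rightarrow$ triangle inequality.
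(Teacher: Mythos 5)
Your proof is correct and is exactly the ``standard Strang argument'' the paper invokes without writing out: the paper itself records the consistency identity $M_h(\bu^\ast,\bv_h)=\int_\Gamma \bg\cdot\bv_h\,ds$ (which rests on the same observation you make, that the constant normal extension kills the stabilization term), and from there your perturbed Galerkin orthogonality, the bound on $\|\bv_h-\bu_h^\ast\|_M$ via Cauchy--Schwarz and $\|\cdot\|_{L^2(\Gamma)}\le\|\cdot\|_M$, and the final triangle inequality reproduce the stated estimate with the constant $2$.
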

\ \\[1ex]

The approximation error part in \eqref{errStrang} can be bounded by standard interpolation error bounds, available for trace finite elements.
\begin{lemma}
\label{lem_apprbound}
 Take $\bu \in H^{k_u+1}(\Gamma)$. The following holds:
\begin{equation} \label{apprbound}
\min_{\bv_h \in \left( V_{h,k_u} \right)^3} \|\bu - \bv_h\|_M \lesssim h^{k_u+1} \|\bu\|_{H^{k_u+1}(\Gamma)}.
\end{equation}
\end{lemma}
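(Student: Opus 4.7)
The plan is to produce an explicit $\bv_h$ achieving the bound by applying a standard nodal (e.g., Scott--Zhang) interpolation operator $I_h\colon H^1(\Omega_h^\Gamma)\to V_{h,k_u}$ componentwise to the constant normal extension $\bu^e$ of $\bu$, and then to invoke three ingredients: (i) standard polynomial interpolation error bounds on the bulk mesh $\cT_h^\Gamma$, (ii) the well-known trace inequality
\[
\|\bw\|_{L^2(\Gamma)}^2 \lesssim h^{-1}\|\bw\|_{L^2(\Omega_h^\Gamma)}^2 + h\,\|\nabla\bw\|_{L^2(\Omega_h^\Gamma)}^2, \qquad \bw\in H^1(\Omega_h^\Gamma),
\]
and (iii) the narrow band estimate $\|\bu^e\|_{H^j(\Omega_h^\Gamma)}^2 \lesssim h\,\|\bu\|_{H^j(\Gamma)}^2$, which follows from the fact that the active strip $\Omega_h^\Gamma$ has width $\cO(h)$.

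Concretely, I would choose $\bv_h := I_h \bu^e \in (V_{h,k_u})^3$. The crucial observation for treating the stabilization contribution is that $\bu^e$ is constant along normals, so $\nabla \bu^e\,\bn \equiv 0$ on $\Omega_h^\Gamma$. Hence
\[
s_h(\bu^e - \bv_h,\bu^e - \bv_h) = \rho_u\,\|\nabla(\bu^e - I_h\bu^e)\,\bn\|_{L^2(\Omega_h^\Gamma)}^2 \lesssim h\,\|\nabla(\bu^e - I_h\bu^e)\|_{L^2(\Omega_h^\Gamma)}^2,
\]
where we used $\rho_u \sim h$ from \eqref{choicerho}. Applying the trace inequality to $\bw = \bu^e - I_h\bu^e$ and combining with the stabilization bound yields
\[
\|\bu^e - I_h\bu^e\|_M^2 \lesssim h^{-1}\|\bu^e - I_h\bu^e\|_{L^2(\Omega_h^\Gamma)}^2 + h\,\|\nabla(\bu^e - I_h\bu^e)\|_{L^2(\Omega_h^\Gamma)}^2.
\]
At this point the standard interpolation estimates
\[
\|\bu^e - I_h\bu^e\|_{H^j(\Omega_h^\Gamma)} \lesssim h^{k_u+1-j}\,\|\bu^e\|_{H^{k_u+1}(\Omega_h^\Gamma)},\qquad j=0,1,
\]
together with the narrow band bound $\|\bu^e\|_{H^{k_u+1}(\Omega_h^\Gamma)}^2 \lesssim h\,\|\bu\|_{H^{k_u+1}(\Gamma)}^2$, give
\[
\|\bu^e - I_h\bu^e\|_M^2 \lesssim h^{2k_u+1}\,\|\bu^e\|_{H^{k_u+1}(\Omega_h^\Gamma)}^2 \lesssim h^{2(k_u+1)}\,\|\bu\|_{H^{k_u+1}(\Gamma)}^2,
\]
which is the desired estimate.

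There is no real obstacle; the only subtle point is to exploit the vanishing of $\nabla\bu^e\,\bn$ to absorb the stabilization term into a standard $H^1$-interpolation estimate so that, jointly with the $\rho_u\sim h$ scaling, the stabilization contributes at the same optimal order as the $L^2(\Gamma)$ part. All three ingredients (bulk interpolation, trace inequality on the active strip, narrow band estimate) are standard tools in the TraceFEM literature (see \cite{GrandeLehrenfeldReusken,burmanembedded}), and the argument is applied componentwise, so the vector-valued case is the same as the scalar one.
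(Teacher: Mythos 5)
Your proposal is correct and follows essentially the same route as the paper: interpolate the constant normal extension, bound the stabilization term using $\rho_u\sim h$ together with a bulk $H^1$ interpolation estimate and the narrow band bound $\|\bu^e\|_{H^{k_u+1}(\Omega_h^\Gamma)}\lesssim h^{1/2}\|\bu\|_{H^{k_u+1}(\Gamma)}$. The only (harmless) difference is that you derive the $L^2(\Gamma)$ interpolation bound yourself via the trace inequality on the active strip, whereas the paper cites it directly as a standard TraceFEM interpolation result; also, the observation $\nabla\bu^e\,\bn\equiv 0$ is not actually needed, since $\|\nabla \bw\,\bn\|\le\|\nabla \bw\|$ already suffices.
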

\begin{proof} Let $\bu \in H^{k_u+1}(\Gamma)$ be given. Its constant extension along normals $\bn$ is also denoted by $\bu$. The standard (componentwise) nodal interpolation in  $V_{h,k_u}$ is denoted by $I^{k_u}$. Note that
\begin{align}
\nonumber
& \min_{\bv_h \in \left( V_{h,k_u} \right)^3} \|\bu - \bv_h\|_M \leq \|\bu - I^{k_u} \bu \|_M \\
\label{u_energie_approx}
&\leq \| \bu - I^{k_u} \bu \|_{L^2(\Gamma)} + \left( s_h \left( \bu - I^{k_u} \bu , \bu - I^{k_u} \bu \right) \right)^{\frac{1}{2}}.
\end{align}
The first term of \eqref{u_energie_approx} can be estimated with a standard interpolation error result for trace finite elements \cite{reusken2015analysis}: 
$ \| \bu - I^{k_u} \bu \|_{L^2(\Gamma)} \lesssim h^{k_u+1} \| \bu \|_{H^{k_u+1}(\Gamma)}$. For the second term in \eqref{u_energie_approx} we get
\begin{equation} \label{est6} \begin{split}
\left( s_h \left( \bu - I^{k_u} \bu , \bu - I^{k_u} \bu \right) \right)^{\frac{1}{2}} &\lesssim h^{\frac{1}{2}} \| \nabla \left( \bu - I^{k_u} \bu \right) \|_{L^2(\Omega_h^\Gamma)} \\
&\lesssim  h^{k_u+\frac12} \| \bu \|_{H^{k_u+1}(\Omega_h^\Gamma)} 
\lesssim  h^{k_u+1} \| \bu \|_{H^{k_u+1}(\Gamma)}.
\end{split}
\end{equation}
Combing these results we obtain the bound \eqref{apprbound}.
\end{proof}
\ \\[1ex]

\begin{remark}
\label{rem_approxU} \rm
In the proof above, cf. \eqref{est6}, a scaling $\rho_u \lesssim h$ is essential to obtain the approximation error bound as  in \eqref{apprbound}.
 For  other usual choices $\rho_u \sim 1$ and $\rho_u \sim h^{-1}$ we obtain an approximation error bound $\lesssim h^{k_u+\frac{1}{2}}$ and $\lesssim h^{k_u}$, respectively. Note that the norm $\|\cdot\|_M$ depends on $\rho_u$. The scaling $\rho_u \sim h$ is optimal it the sense that it balances the two terms in \eqref{u_energie_approx}.  Without stabilization, i.e., $\rho_u=0$,  we  obtain an optimal approximation error bound in the $\|\cdot\|_{L^2(\Gamma_h)}$-norm. Below we see that an optimal error bound in the $H^1$-norm can only be derived for the case that the error in the $\|\cdot\|_M$-norm is bounded by $h^{k_u+1}$ \emph{and} $\rho_u \gtrsim h$ is satisfied, cf. Remark~\ref{rem_uH1}. Based on these observations we take the scaling $\rho_u \sim h$ in \eqref{choicerho}. In Section \ref{sectvarrho} we show results of experiments with different scalings of the stabilization parameter $\rho_u$, which confirm these findings.
\end{remark}
\ \\[1ex]

From the analysis below and from numerical experiments we conclude that for the case $k=k_u$ we have suboptimal discretization errors bounds for the velocity approximaiton $\bu_h \in (V_{h,k_u})^3$. Hence, the case $k<k_u$ is not of interest. In view of this and to simplify the presentation, \emph{in  remainder we restrict to} $k\geq k_u$.

\begin{theorem}
\label{thmdiskretisierungsfehler_u_energy}
Let $\bu^\ast$ and $\bu_h^* \in \left( V_{h,k_u} \right)^3$ be the unique solutions of \eqref{Variationsformulierung_u} and \eqref{Variationsformulierung_u_diskretA}, respectively.
We assume that $\bu^\ast \in H^{k_u+1}(\Gamma)^3$ holds. Let $k_g \in \Bbb{N}$ be such that 
 \[ \|\bn - \tilde \bn_h\|_{L^\infty(\Gamma)} \lesssim h^{k_g}.
 \]
 With $k^*:=0$ if $k=1$ and $k^*:=1$ if $k \geq 2$, the following discretization error bound holds:
\begin{equation} \label{AARR}
\|\bu^\ast - \bu_h^\ast\|_M \lesssim \, \left( h^{k_g} + h^{\min\{ k_u+1, k \} } + h^{k_u+k^*} \right)  \|\bu^\ast\|_{H^{k_u+1}(\Gamma)}.
\end{equation}
\end{theorem}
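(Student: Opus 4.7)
The plan is to apply the Strang-type bound \eqref{errStrang} and estimate its two contributions separately, using Lemma~\ref{lem_apprbound} for the approximation term and the previously derived stream-function error bounds together with the regularity relation \eqref{NormConnection_U_Psi} for the consistency term $\|\bg - \bg_h\|_{L^2(\Gamma)}$.

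First, Lemma~\ref{lem_apprbound} immediately yields $\min_{\bv_h} \|\bu^\ast - \bv_h\|_M \lesssim h^{k_u+1}\|\bu^\ast\|_{H^{k_u+1}(\Gamma)}$, which (since $h \leq 1$ and $\min\{k_u+1,k\} \leq k_u+1$) is dominated by $h^{\min\{k_u+1,k\}}\|\bu^\ast\|_{H^{k_u+1}(\Gamma)}$.

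Second, for the consistency term, I would perform the split
\begin{equation*}
\bg - \bg_h \;=\; (\bn - \tilde{\bn}_h)\times \nablaG \psi^\ast \;+\; \tilde{\bn}_h\times\nablaG(\psi^\ast-\psi_h^\ast).
\end{equation*}
The first summand is bounded in $L^2(\Gamma)$ by $\|\bn-\tilde{\bn}_h\|_{L^\infty(\Gamma)}\|\nablaG\psi^\ast\|_{L^2(\Gamma)} \lesssim h^{k_g}\|\psi^\ast\|_{H^1(\Gamma)}$, and by \eqref{NormConnection_U_Psi} (with $r=0$) we have $\|\psi^\ast\|_{H^1(\Gamma)}\lesssim \|\bu^\ast\|_{L^2(\Gamma)}\leq \|\bu^\ast\|_{H^{k_u+1}(\Gamma)}$, producing the $h^{k_g}$ term. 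The second summand is bounded by $|\psi^\ast-\psi_h^\ast|_1$ since $|\tilde\bn_h|$ is uniformly bounded.

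Third, I invoke Theorem~\ref{thmDiscretizationErrorStreamfunction}. From $\bu^\ast\in H^{k_u+1}(\Gamma)$ and \eqref{NormConnection_U_Psi} one gets $\psi^\ast\in H^{k_u+2}(\Gamma)$, so I set $m:=k_u+2$ (which is $\geq 3$ because $k_u\geq 1$). Then $s_1 = \min\{k_u+2,k+1\}$ and, using the standing restriction $k\geq k_u$, $s_2=\min\{k_u,k+1\} = k_u$. Substituting into \eqref{NotesAR_22} gives
\begin{equation*}
|\psi^\ast-\psi_h^\ast|_1 \;\lesssim\; h^{\min\{k_u+1,k\}}\|\psi^\ast\|_{H^{s_1}(\Gamma)} + h^{k_u+k^\ast}\|\phi^\ast\|_{H^{k_u}(\Gamma)}.
\end{equation*}
Using $\|\psi^\ast\|_{H^{s_1}(\Gamma)}\leq \|\psi^\ast\|_{H^{k_u+2}(\Gamma)}\lesssim \|\bu^\ast\|_{H^{k_u+1}(\Gamma)}$ from \eqref{NormConnection_U_Psi} applied at $r=k_u+1$, and $\|\phi^\ast\|_{H^{k_u}(\Gamma)}=\|\DeltaG\psi^\ast\|_{H^{k_u}(\Gamma)}\leq \|\psi^\ast\|_{H^{k_u+2}(\Gamma)}\lesssim \|\bu^\ast\|_{H^{k_u+1}(\Gamma)}$, both smoothness factors are converted to $\|\bu^\ast\|_{H^{k_u+1}(\Gamma)}$. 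Collecting the three contributions (approximation, normal-approximation, stream-function) and absorbing $h^{k_u+1}$ into $h^{\min\{k_u+1,k\}}$ yields exactly \eqref{AARR}.

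The only non-routine step is the bookkeeping for the stream-function error: one must verify that the regularity index $m=k_u+2$ is correctly inferred from $\bu^\ast\in H^{k_u+1}(\Gamma)$ via \eqref{NormConnection_U_Psi}, that the restriction $k\geq k_u$ collapses $s_2$ to $k_u$ (so that the $h^{k_u+k^\ast}$ term—rather than something larger—appears), and that the two smoothness norms arising in Theorem~\ref{thmDiscretizationErrorStreamfunction} can both be dominated by $\|\bu^\ast\|_{H^{k_u+1}(\Gamma)}$. Once that matching is in place the remainder is a direct combination of previously proved ingredients, with no new analytic difficulty.
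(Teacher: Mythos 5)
Your proposal is correct and follows essentially the same route as the paper's proof: a Strang argument combining the approximation bound of Lemma~\ref{lem_apprbound} with the splitting $\bg-\bg_h=(\bn-\tilde\bn_h)\times\nablaG\psi^\ast+\tilde\bn_h\times\nablaG(\psi^\ast-\psi_h^\ast)$ and the stream-function error bound \eqref{NotesAR_22}. Your bookkeeping with $m=k_u+2$, $s_2=k_u$ (using $k\geq k_u$) and the conversion of all smoothness norms to $\|\bu^\ast\|_{H^{k_u+1}(\Gamma)}$ via \eqref{NormConnection_U_Psi} matches the displayed estimates in the paper exactly.
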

\begin{proof}
Using the Cauchy-Schwarz inequality, the assumption on the normal approximation and the discretization error for the stream function \eqref{NotesAR_22} in Theorem \ref{thmDiscretizationErrorStreamfunction} (with $m=k_u+1$) yields
\begin{align}
\notag
\|\bg - \bg_h\|_{L^2(\Gamma)} \leq & \, \| \left( \bn - \tilde{\bn}_h \right) \times \nablaG \psi^\ast \|_{L^2(\Gamma)} + \| \tilde \bn_h \times \left( \nablaG \left( \psi^\ast - \psi_h^\ast \right) \right) \|_{L^2(\Gamma)} \\
\notag
\lesssim & \, h^{k_g} \| \psi^\ast \|_{H^{1}(\Gamma)} + | \psi^* - \psi_h^* |_1 \\
\notag
\lesssim & \, h^{k_g} \| \psi^\ast \|_{H^{1}(\Gamma)} + h^{\min\{ k_u+1, k \} } \| \psi^* \|_{H^{\min \{ k_u+2, k+1 \}}(\Gamma)} \\
\notag
&+ h^{k_u+k^*} \| \psi^* \|_{H^{k_u+2}(\Gamma)} \\
\label{konsistenz_u}
\lesssim & \, \left( h^{k_g} + h^{\min\{ k_u+1, k \} } + h^{k_u+k^*} \right)  \|\bu^\ast\|_{H^{k_u+1}(\Gamma)}.
\end{align}
Here we used the assumptions $k \geq k_u$, $\psi^\ast \in H^1_*(\Gamma) \cap H^{k_u+2}(\Gamma)$, and $\|\phi^*\|_{k_u} \lesssim \|\psi^*\|_{k_u+2} \lesssim \|\bu^*\|_{k_u+1}$, due to the first estimate in \eqref{NormConnection_U_Psi}.
The result \eqref{AARR} follows from the approximation error \eqref{apprbound}, the error bound \eqref{konsistenz_u}  and the Strang estimate \eqref{errStrang}.
\end{proof}
\ \\[1ex]
We discuss this theorem in the next remark.
\begin{remark} \rm
\label{rem_connection_up}
The discretization error bound in Theorem \ref{thmdiskretisierungsfehler_u_energy} implies that the choices $k=k_u+1$ and $k_g=k_u+1$ lead to optimal order of convergence:
\begin{align*}
\|\bu^\ast - \bu_h^\ast\|_M \lesssim \, h^{k_u+1} \|\bu^\ast\|_{H^{k_u+1}(\Gamma)}.
\end{align*}
In particular,  for linear finite elements for the velocity ($k_u=1$) we obtain an optimal discretization error bound $\lesssim h^2$ if we use quadratic finite elements for the stream function ($k=2$) and a  normal approximation with second order accuracy ($k_g=2$). Choosing either $k=1$ or $k_g=1$ will lead to an error bound $\lesssim h$ and is therefore suboptimal. This suboptimality is confirmed in  numerical experiments.
\end{remark}
\ \\[1ex]

As an immediate consequence we obtain the following optimal discretization error bound in the norm $\|\cdot \|_{L^2(\Gamma)}$.
\begin{corollary}
\label{thmdiskretisierungsfehler_u_L2}
If the assumptions as in Theorem~\ref{thmdiskretisierungsfehler_u_energy} are satisfied and we take $k=k_g=k_u+1$, the following error bound holds:
\begin{align*}
\|\bu^\ast - \bu_h^\ast\|_{L^2(\Gamma)} \lesssim h^{k_u+1}   \|\bu^\ast\|_{H^{k_u+1}(\Gamma)}.
\end{align*}
\end{corollary}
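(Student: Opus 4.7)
The corollary follows in one short step from Theorem~\ref{thmdiskretisierungsfehler_u_energy}, essentially by specializing the three exponents and dropping the stabilization seminorm. My plan is as follows.

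First I would observe that, by the definition of $M_h$ in \eqref{Variationsformulierung_u_diskretA}, the bilinear form $s_h(\cdot,\cdot)$ is a nonnegative (semi-)inner product, so that
\begin{equation*}
\|\bu^\ast - \bu_h^\ast\|_{L^2(\Gamma)}^2 \;\leq\; \|\bu^\ast - \bu_h^\ast\|_{L^2(\Gamma)}^2 + s_h(\bu^\ast - \bu_h^\ast, \bu^\ast - \bu_h^\ast) \;=\; \|\bu^\ast - \bu_h^\ast\|_M^2.
\end{equation*}
Here $\bu^\ast$ is understood through its constant extension along normals into $\Omega_h^\Gamma$, as in the statement of Theorem~\ref{thmdiskretisierungsfehler_u_energy}, so that $s_h(\bu^\ast - \bu_h^\ast, \cdot)$ is well defined.

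Next I would substitute $k = k_g = k_u + 1$ into the bound \eqref{AARR}. The three exponents appearing there become:
\begin{equation*}
h^{k_g} = h^{k_u+1}, \qquad h^{\min\{k_u+1,\,k\}} = h^{k_u+1}, \qquad h^{k_u + k^\ast} = h^{k_u+1},
\end{equation*}
where the last equality uses that $k = k_u+1 \geq 2$ (since $k_u \geq 1$), so $k^\ast = 1$ by the convention introduced in Theorem~\ref{thmdiskretisierungsfehler_u_energy}. Hence the sum of the three terms is $\lesssim h^{k_u+1}$, and Theorem~\ref{thmdiskretisierungsfehler_u_energy} gives
\begin{equation*}
\|\bu^\ast - \bu_h^\ast\|_M \;\lesssim\; h^{k_u+1}\,\|\bu^\ast\|_{H^{k_u+1}(\Gamma)}.
\end{equation*}

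Combining these two steps yields the claimed $L^2$ bound. There is no genuine obstacle here: the whole content is already in Theorem~\ref{thmdiskretisierungsfehler_u_energy}, and the corollary is really a bookkeeping statement verifying that, under the balanced choice $k = k_g = k_u+1$, the three error contributions (geometric normal error, stream function $H^1$-error, stabilization/approximation term) are of the same order $h^{k_u+1}$, which is the optimal $L^2$ rate for continuous finite elements of degree $k_u$.
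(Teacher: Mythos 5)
Your proposal is correct and matches the paper's intent exactly: the paper presents this corollary as an ``immediate consequence'' of Theorem~\ref{thmdiskretisierungsfehler_u_energy}, and your two steps --- bounding $\|\cdot\|_{L^2(\Gamma)}$ by $\|\cdot\|_M$ via nonnegativity of $s_h$, then checking that all three exponents in \eqref{AARR} equal $k_u+1$ under the choice $k=k_g=k_u+1$ (using $k^\ast=1$ since $k\geq 2$) --- are precisely the bookkeeping the paper leaves implicit.
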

\ \\[1ex]
In the next theorem we derive a discretization error bound in the $\|\cdot\|_{H^1(\Gamma)}$-norm.
\begin{theorem}
\label{thmdiskretisierungsfehler_u_H1}
Let the assumptions as in Theorem~\ref{thmdiskretisierungsfehler_u_energy} be satisfied. The following error bound holds:
\begin{align*}
\|\bu^\ast - \bu_h^\ast\|_{H^1(\Gamma)} \lesssim \left( h^{k_g-1} + h^{\min\{ k_u, k-1 \} } + h^{k_u+k^*-1} \right)  \|\bu^\ast\|_{H^{k_u+1}(\Gamma)}.
\end{align*}
\end{theorem}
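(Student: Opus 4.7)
The plan is to decompose the error via the componentwise nodal interpolant $I^{k_u}\bu^*$,
\[
\|\bu^\ast - \bu_h^\ast\|_{H^1(\Gamma)} \leq \|\bu^\ast - I^{k_u}\bu^\ast\|_{H^1(\Gamma)} + \|I^{k_u}\bu^\ast - \bu_h^\ast\|_{H^1(\Gamma)},
\]
and handle the two pieces separately. The first (approximation) term is immediate from standard interpolation for trace finite elements, giving $\|\bu^\ast - I^{k_u}\bu^\ast\|_{H^1(\Gamma)} \lesssim h^{k_u}\|\bu^\ast\|_{H^{k_u+1}(\Gamma)}$. A quick inspection of the right-hand side of the claim shows that $h^{k_u}$ is absorbed by the dominant $h^{k_u+k^*-1}$ and $h^{\min\{k_u,k-1\}}$ terms, so no further work on this piece is needed.

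For the finite-element piece, set $\bv_h := I^{k_u}\bu^\ast - \bu_h^\ast \in (V_{h,k_u})^3$. The plan is to trade the $H^1(\Gamma)$-norm of $\bv_h$ for its $\|\cdot\|_M$-norm (which is already under control by Theorem~\ref{thmdiskretisierungsfehler_u_energy}) at the cost of exactly one power of $h^{-1}$. This is done by chaining three standard TraceFEM tools: the sharpened trace inequality $\|\nabla \bv_h\|_{L^2(\Gamma)}^2 \lesssim h^{-1}\|\nabla \bv_h\|_{L^2(\Omega^\Gamma_h)}^2$ (from the usual trace inequality applied to the piecewise-polynomial $\nabla \bv_h$ combined with an inverse estimate), the inverse estimate $\|\nabla \bv_h\|_{L^2(\Omega^\Gamma_h)}^2 \lesssim h^{-2}\|\bv_h\|_{L^2(\Omega^\Gamma_h)}^2$, and \eqref{lem_NotesAR_3_help} applied componentwise. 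Together these give
\[
\|\nabla \bv_h\|_{L^2(\Gamma)}^2 \lesssim h^{-2}\|\bv_h\|_{L^2(\Gamma)}^2 + h^{-1}\|(\nabla \bv_h)\bn\|_{L^2(\Omega^\Gamma_h)}^2.
\]
Because of the choice $\rho_u \sim h$ in \eqref{choicerho}, the rightmost term equals $\sim h^{-2}\,s_h(\bv_h,\bv_h)$, so both contributions combine into $h^{-2}\|\bv_h\|_M^2$, yielding the key inverse-type inequality $\|\nabla \bv_h\|_{L^2(\Gamma)} \lesssim h^{-1}\|\bv_h\|_M$ on the finite element space $(V_{h,k_u})^3$.

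The final step is to bound $\|\bv_h\|_M \leq \|I^{k_u}\bu^\ast - \bu^\ast\|_M + \|\bu^\ast - \bu_h^\ast\|_M$, invoking Lemma~\ref{lem_apprbound} for the first summand and Theorem~\ref{thmdiskretisierungsfehler_u_energy} for the second, so that
\[
\|\bv_h\|_M \lesssim \bigl(h^{k_u+1} + h^{k_g} + h^{\min\{k_u+1,k\}} + h^{k_u+k^*}\bigr)\|\bu^\ast\|_{H^{k_u+1}(\Gamma)}.
\]
Multiplying by $h^{-1}$ and combining with the interpolation contribution yields the three terms in the claim, the $h^{k_u}$ leftover being absorbed in both cases $k^*=0$ and $k^*=1$. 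I expect the main subtlety to be the derivation of the inverse inequality $\|\nabla \bv_h\|_{L^2(\Gamma)} \lesssim h^{-1}\|\bv_h\|_M$: it is exactly the $\rho_u \sim h$ scaling that makes the stabilization contribution balance with the geometric trace term so that a single uniform constant appears, and any other scaling would either spoil the present $H^1$ bound or the $M$-norm approximation bound of Lemma~\ref{lem_apprbound}, in line with the discussion in Remark~\ref{rem_approxU}.
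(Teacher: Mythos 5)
Your proposal is correct and follows essentially the same route as the paper's proof: the same splitting via the nodal interpolant $I^{k_u}\bu^\ast$, the same chain of trace inequality, inverse estimate and \eqref{lem_NotesAR_3_help} to obtain $\|\nabla \bv_h\|_{L^2(\Gamma)} \lesssim h^{-1}\|\bv_h\|_M$ on $(V_{h,k_u})^3$ (where $\rho_u\gtrsim h$ is exactly what is needed), and the same final combination with Lemma~\ref{lem_apprbound} and Theorem~\ref{thmdiskretisierungsfehler_u_energy}.
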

\begin{proof}
We consider the splitting (with a constant extension of $\bu^\ast$)
\begin{align}
\label{u_h1fehler_split}
\| \nabla_\Gamma \left( \bu^* - \bu_h^* \right) \|_{L^2(\Gamma)} \leq \| \nabla \left( \bu^* - I^{k_u} \bu^* \right) \|_{L^2(\Gamma)} + \| \nabla \left( I^{k_u} \bu^* - \bu_h^* \right) \|_{L^2(\Gamma)},
\end{align}
with $I^{k_u}$ the nodal interpolation operator as used in the proof of Lemma~\ref{lem_apprbound}.
We use the interpolation error bound
\begin{equation} \label{int2} \| \nabla \left( \bu^* - I^{k_u} \bu^* \right) \|_{L^2(\Gamma)} \lesssim h^{k_u} \|\bu\|_{H^{k_u+1}(\Gamma)}.
\end{equation}
For the other term we first recall the following result, for $T\in \cT_h^\Gamma$:
\[
 \|v\|_{L^2(\Gamma\cap T)}^2 \lesssim \left( h_T^{-1}\|v\|_{L^2(T)}^2 + h_T \|\nabla v\|_{L^2(T)}^2 \right), \quad \text{for all}~v \in H^1(T),
\]
with $h_T:={\rm diam}(T)$, cf.~\cite{Hansbo02}.
Using this, a standard inverse inequality and the estimate \eqref{lem_NotesAR_3_help} yields
\begin{align*}
\| \nabla \left( I^{k_u} \bu^* - \bu_h^* \right) \|_{L^2(\Gamma)} & \lesssim h^{-\frac12}\| \nabla \left( I^{k_u} \bu^* - \bu_h^* \right) \|_{L^2(\Omega^\Gamma_h)} 
 \lesssim h^{-1\frac12} \| I^{k_u} \bu^* - \bu_h^* \|_{L^2(\Omega^\Gamma_h)} \\
& \lesssim h^{-1} \| I^{k_u} \bu^* - \bu_h^* \|_{L^2(\Gamma)} +h^{-\frac12} \| \bn \cdot \nabla \left( I^{k_u} \bu^* - \bu_h^* \right) \|_{L^2(\Omega^\Gamma_h)} \\
& \lesssim h^{-1} \| I^{k_u} \bu^* - \bu_h^*  \|_M.
\end{align*}
We apply the triangle equality, use the interpolation error bound derived in the proof of Lemma~\ref{lem_apprbound}, and the discretization error from Theorem \ref{thmdiskretisierungsfehler_u_energy}. This yields
\begin{align*}
& \| \nabla \left( I^{k_u} \bu^* - \bu_h^* \right) \|_{L^2(\Gamma)} \lesssim  \, h^{-1} \| \bu^* - I^{k_u} \bu^* \|_M + h^{-1} \| \bu^* - \bu_h^*  \|_M \\
 & \lesssim  \, \left(h^{k_u}+ h^{k_g-1} + h^{\min\{ k_u, k-1 \} } + h^{k_u+k^*-1} \right)  \|\bu^\ast\|_{H^{k_u+1}(\Gamma)}.
\end{align*}
Combining this with \eqref{int2}, and noting that $k_u \geq \min\{ k_u, k-1 \}$, completes  the proof.
\end{proof}
\begin{remark} \rm
\label{rem_uH1}
In the proof above the result \eqref{lem_NotesAR_3_help} is needed. For this it is essential that in the stabilization in the discrete problem \eqref{Variationsformulierung_u_diskretA} we use a parameter $\rho_u \gtrsim h$. Hence, without stabilization ($\rho_u=0$) we do not obtain an optimal order discretization error bound in the $\|\cdot\|_{H^1(\Gamma_h)}$-norm. Numerical  experiments in Section~\ref{SectExp} show that for optimal order convergence the $\|\cdot\|_{H^1(\Gamma_h)}$-norm  the normal derivative volume stabilization is  indeed essential. 
\end{remark}

\subsection{Error analysis for pressure reconstruction} 
The pressure reconstruction is based on the variational problem \eqref{Variationsformulierung_p}. For the discretization we consider the case with $\Gamma_h=\Gamma$, i.e. (cf. \eqref{Variationsformulierung_p_diskret}): 
Determine $p_h \in V_{h,k_p}$ with $\int_{\Gamma_h} p_h \, ds_h=0$,  such that
\begin{equation}
\label{Variationsformulierung_p_diskretA} 
\begin{split}
A_h(p_h , \xi_h) & : = \int_{\Gamma} \nablaG p_h \cdot \nablaG \xi_h \, ds + s_h(p_h,\xi_h) = \int_{\Gamma}  \bz_h \cdot \nablaG \xi_h \,ds \quad \forall~\xi_h \in V_{h,k_p}, \\
 \text{with}~~s_h(p_h,\xi_h)&= \rho_p \int_{ \Omega^\Gamma_h} \left( \bn \cdot \nabla p_h \right) \cdot\left( \bn \cdot \nabla \xi_h \right) \, dx, \quad  \bz_h:=  (K \RotG \psi_h^* + \blf ),
\end{split}
\end{equation} 
and $\rho_p$ as in \eqref{stabrho}.
We denote the unique solution of \eqref{Variationsformulierung_p_diskretA} with $p_h^*$. The exact pressure solution $p^\ast$ (extended constantly along normals) satisfies, cf.  \eqref{Variationsformulierung_p},
\[ \begin{split}
  A_h(p^\ast, \xi_h) & =\int_{\Gamma} \bz \cdot \nablaG \xi_h \,ds \quad \forall~\xi_h \in V_{h,k_p}, \\
\text{with}~~ \bz & =  (K \RotG \psi^* + \blf ).
\end{split} \]
The corresponding seminorm is denoted by $\|\cdot\|_A$ (which is the same as in Section~\ref{SectAnalysisStreamfunction} but with $\rho_p$ instead of $\rho$). Using a standard Strang argument we obtain the following result.
\begin{lemma}
Let $p_h^*$ be the unique solution of \eqref{Variationsformulierung_p_diskretA}. The following error estimate holds:
\begin{equation} \label{errStrang_p}
\|p^\ast - p_h^*\|_A \leq 2 \min_{\xi_h \in V_{h,k_p}} \|p^\ast - \xi_h\|_A +  \|\bz - \bz_h\|_{L^2(\Gamma)}.
\end{equation}
\end{lemma}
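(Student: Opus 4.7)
The plan is to use a standard Strang lemma argument, essentially identical to the velocity case \eqref{errStrang}. The key structural observation is that $A_h(\cdot,\cdot)$ is a symmetric non-negative bilinear form (so $\|\cdot\|_A$ is a seminorm and is a norm on the discrete space modulo constants by \eqref{lem_NotesAR_3_help} and a surface Poincare inequality on $H^1_\ast(\Gamma)$), and that the constant normal extension of $p^\ast$ satisfies $\bn \cdot \nabla p^\ast = 0$, so $s_h(p^\ast,\xi_h)=0$ and therefore $A_h(p^\ast,\xi_h) = \int_\Gamma \nabla_\Gamma p^\ast \cdot \nabla_\Gamma \xi_h\,ds = \int_\Gamma \bz \cdot \nabla_\Gamma \xi_h\,ds$ for every $\xi_h \in V_{h,k_p}$, as stated in the paragraph preceding the lemma.

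First I would fix an arbitrary $\xi_h \in V_{h,k_p}$ and use the triangle inequality
\[
  \|p^\ast - p_h^\ast\|_A \leq \|p^\ast - \xi_h\|_A + \|\xi_h - p_h^\ast\|_A,
\]
so that it suffices to control the discrete part $\|\xi_h - p_h^\ast\|_A$. Next I would expand $\|\xi_h - p_h^\ast\|_A^2 = A_h(\xi_h - p_h^\ast,\xi_h - p_h^\ast)$, split this as $A_h(\xi_h - p^\ast,\xi_h - p_h^\ast) + A_h(p^\ast - p_h^\ast,\xi_h - p_h^\ast)$, and bound the first term by Cauchy--Schwarz with respect to $A_h$.

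The second term is where the inconsistency enters: using the exact identity $A_h(p^\ast,\eta_h) = \int_\Gamma \bz \cdot \nabla_\Gamma \eta_h\,ds$ and the discrete equation $A_h(p_h^\ast,\eta_h) = \int_\Gamma \bz_h \cdot \nabla_\Gamma \eta_h\,ds$ with $\eta_h := \xi_h - p_h^\ast \in V_{h,k_p}$, we obtain
\[
  A_h(p^\ast - p_h^\ast,\xi_h - p_h^\ast) = \int_\Gamma (\bz - \bz_h) \cdot \nabla_\Gamma(\xi_h - p_h^\ast)\,ds \leq \|\bz - \bz_h\|_{L^2(\Gamma)}\|\xi_h - p_h^\ast\|_A,
\]
since $\|\nabla_\Gamma \eta_h\|_{L^2(\Gamma)} \leq \|\eta_h\|_A$ by the non-negativity of $s_h$. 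Combining the two bounds, dividing through by $\|\xi_h - p_h^\ast\|_A$ (the estimate being trivial if it vanishes), yields $\|\xi_h - p_h^\ast\|_A \leq \|\xi_h - p^\ast\|_A + \|\bz - \bz_h\|_{L^2(\Gamma)}$. Substituting back and taking the infimum over $\xi_h \in V_{h,k_p}$ gives the stated bound with constant $2$ in front of the best-approximation term.

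There is no real obstacle; the only points worth double-checking are that the constant normal extension of $p^\ast$ makes $s_h(p^\ast,\cdot)$ vanish on $V_{h,k_p}$ (which uses the extension convention established earlier in the paper) and that both $p^\ast$ and $p_h^\ast$ lie in a space on which $\|\cdot\|_A$ is genuinely a norm, which is ensured by the constraint $\int_{\Gamma} p_h\,ds = 0$ (analogously for $p^\ast \in H^1_\ast(\Gamma)$) together with the standard Poincare inequality on $H^1_\ast(\Gamma)$ and the discrete analogue \eqref{lem_NotesAR_3_help}. The argument is fully parallel to the Strang estimate \eqref{errStrang} used for the velocity, with the $L^2$ inner product there replaced by the $A_h$-bilinear form here.
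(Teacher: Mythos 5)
Your argument is correct and is precisely the ``standard Strang argument'' that the paper invokes for this lemma without writing out the details: triangle inequality, Cauchy--Schwarz for the symmetric positive semidefinite form $A_h(\cdot,\cdot)$, and the consistency identity $A_h(p^\ast,\eta_h)-A_h(p_h^\ast,\eta_h)=\int_\Gamma(\bz-\bz_h)\cdot\nablaG \eta_h\,ds$ bounded by $\|\bz-\bz_h\|_{L^2(\Gamma)}\|\eta_h\|_A$. Your side remarks on the constant normal extension killing $s_h(p^\ast,\cdot)$ and on the mean-zero normalization are consistent with the conventions established earlier in the paper, so there is nothing to correct.
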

\ \\[1ex]
For the approximation error part in \eqref{errStrang_p} we have a bound as  in Proposition \ref{propestimates}. For $p^\ast \in H^1_*(\Gamma) \cap H^{k_p+1}(\Gamma)$ the following holds:
\begin{align}
\label{apprbound_p}
\min_{\xi_h \in V_{h,k_p}} \|p^\ast - \xi_h\|_A \lesssim h^{k_p} \|p^\ast\|_{H^{k_p+1}(\Gamma)}.
\end{align}

We now present a discretization error bound for the pressure.
\begin{theorem}
\label{thmdiskretisierungsfehler_p_energy}
Let $p^\ast \in H^1_*(\Gamma)$ and $p_h^* \in V_{h,k_p}$ be the unique solutions of \eqref{Variationsformulierung_p} and \eqref{Variationsformulierung_p_diskretA}, respectively. We assume $p^\ast \in  H^{k_p+1}(\Gamma)$, and  let $m \geq 3$ be such that $\psi^* \in H^m(\Gamma)$.  With $s_1=\min \{m, k+1\}$, $s_2=\min \{m-2, k+1\}$ and $k^*:=0$ if $k=1$ and $k^*:=1$ if $k \geq 2$ the following error bound holds:
\begin{align*}
\|p^\ast - p_h^\ast\|_A \lesssim \, h^{k_p} \|p^\ast\|_{H^{k_p+1}(\Gamma)} + h^{s_1-1} \| \psi^* \|_{H^{s_1}(\Gamma)} + h^{s_2+k^*} \| \psi^* \|_{H^{s_2+2}(\Gamma)}.
\end{align*}
\end{theorem}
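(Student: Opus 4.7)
The plan is to combine the Strang-type estimate \eqref{errStrang_p} with the approximation bound \eqref{apprbound_p} and a bound on the data-perturbation error $\|\bz-\bz_h\|_{L^2(\Gamma)}$, and then to feed into this last bound the already-proved discretization error for the stream function from Theorem~\ref{thmDiscretizationErrorStreamfunction}. So essentially this is a consequence of the results derived in Section~\ref{SectAnalysisStreamfunction}; no genuinely new tool is needed.

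First I would invoke \eqref{errStrang_p}, which reduces the task to estimating the two terms on its right-hand side. The approximation term $\min_{\xi_h\in V_{h,k_p}}\|p^\ast-\xi_h\|_A$ is controlled directly by \eqref{apprbound_p} under the regularity assumption $p^\ast\in H^{k_p+1}(\Gamma)$, producing the first summand $h^{k_p}\|p^\ast\|_{H^{k_p+1}(\Gamma)}$ in the stated bound.

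Next I would treat the consistency/data term. By construction $\bz-\bz_h = K\,\RotG(\psi^\ast-\psi_h^\ast)$, and since $K\in L^\infty(\Gamma)$ and $\RotG\xi=\bn\times\nablaG\xi$, we have
\begin{equation*}
\|\bz-\bz_h\|_{L^2(\Gamma)} \;\lesssim\; \|\nablaG(\psi^\ast-\psi_h^\ast)\|_{L^2(\Gamma)} \;=\; |\psi^\ast-\psi_h^\ast|_1 .
\end{equation*}
Now I apply the stream function error estimate \eqref{NotesAR_22} in Theorem~\ref{thmDiscretizationErrorStreamfunction}, which gives
\begin{equation*}
|\psi^\ast-\psi_h^\ast|_1 \;\lesssim\; h^{s_1-1}\|\psi^\ast\|_{H^{s_1}(\Gamma)} + h^{s_2+k^\ast}\|\phi^\ast\|_{H^{s_2}(\Gamma)} .
\end{equation*}
Finally, using $\phi^\ast=\DeltaG\psi^\ast$ and the standard elliptic bound $\|\DeltaG\psi^\ast\|_{H^{s_2}(\Gamma)}\leq \|\psi^\ast\|_{H^{s_2+2}(\Gamma)}$, the second factor is replaced by $\|\psi^\ast\|_{H^{s_2+2}(\Gamma)}$. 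Adding the two contributions yields precisely the asserted estimate.

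There is no real obstacle here: the analysis is structurally simpler than for the stream function, because \eqref{Variationsformulierung_p_diskretA} is a stabilized Laplace–Beltrami problem with a data perturbation, and the indefinite term $b_K(\cdot,\cdot)$ that drove the analysis in Section~\ref{SectAnalysisStreamfunction} does not appear. The only point that one must check carefully is that the stabilization scaling \eqref{stabrho} is compatible with the standard Strang/approximation argument for TraceFEM applied to Laplace–Beltrami, but this is exactly the setting underlying \eqref{apprbound_p} and the bounds in Proposition~\ref{propestimates}, so it requires no new work.
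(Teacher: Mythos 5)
Your proposal is correct and follows essentially the same route as the paper's proof: Strang estimate \eqref{errStrang_p}, approximation bound \eqref{apprbound_p}, the data-perturbation bound $\|\bz-\bz_h\|_{L^2(\Gamma)}\lesssim|\psi^\ast-\psi_h^\ast|_1$ via $K\in L^\infty(\Gamma)$, the stream function estimate \eqref{NotesAR_22}, and finally $\|\phi^\ast\|_{H^{s_2}(\Gamma)}\lesssim\|\psi^\ast\|_{H^{s_2+2}(\Gamma)}$. No discrepancies to report.
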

\begin{proof}
Using  the discretization error bound for the stream function \eqref{NotesAR_22} in Theorem \ref{thmDiscretizationErrorStreamfunction} and $\|\phi^*\|_{r} \lesssim \|\psi^*\|_{r+2}$ we obtain
\begin{align*}
\|\bz - \bz_h\|_{L^2(\Gamma)} \leq & \, \| K \|_{L^\infty(\Gamma)} \| \RotG \left( \psi^* - \psi_h^* \right) \|_{L^2(\Gamma)} \\
\lesssim & \, | \psi^* - \psi_h^* |_1  \lesssim  \, h^{s_1-1} \| \psi^* \|_{H^{s_1}(\Gamma)} + h^{s_2+k^*} \| \phi^* \|_{H^{s_2}(\Gamma)} \\
\lesssim & \, h^{s_1-1} \| \psi^* \|_{H^{s_1}(\Gamma)} + h^{s_2+k^*} \| \psi^* \|_{H^{s_2+2}(\Gamma)}.
\end{align*}
 Combining this with  the approximation error \eqref{apprbound_p} and the Strang  estimate \eqref{errStrang_p} completes the proof. 
\end{proof}

If $p^\ast \in H^{k_p+1}(\Gamma)$ holds, it is reasonable to assume the smoothness property $\bu^\ast \in H^{k_p+2}(\Gamma)$, i.e., $\psi^\ast \in H^{k_p+3}(\Gamma)$. We consider the special case of Theorem~\ref{thmdiskretisierungsfehler_p_energy} with $m=k_p+3$:
\begin{corollary} Let the assumptions as in Theorem~\ref{thmdiskretisierungsfehler_p_energy} be satisfied, with $m=k_p+3$. We take $k \in \{k_p,k_p+1\}$. Then the following error bound holds:
\[
 \|p^\ast - p_h^\ast\|_A \lesssim \, h^{k_p}\left( \|p^\ast\|_{H^{k_p+1}(\Gamma)} +\|\bu^\ast\|_{H^{k_p+2}(\Gamma)}\right).
\]

\end{corollary}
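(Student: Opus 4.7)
The corollary is a direct specialization of Theorem~\ref{thmdiskretisierungsfehler_p_energy}, so my plan is to substitute the specific parameter choices and verify that each of the three terms on the right-hand side is of order $h^{k_p}$ times one of the two norms appearing in the claim.

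First, with $m = k_p+3$ and $k\in\{k_p,k_p+1\}$ I would compute the indices
\[
s_1 = \min\{k_p+3,\,k+1\}, \qquad s_2 = \min\{k_p+1,\,k+1\} = k_p+1,
\]
distinguishing the two cases: for $k=k_p$ one has $s_1=k_p+1$, for $k=k_p+1$ one has $s_1=k_p+2$. In both cases the exponent $s_1-1$ equals either $k_p$ or $k_p+1$, and $s_2+k^{\ast}\in\{k_p+1,k_p+2\}$. Consequently each of the two $\psi^\ast$-contributions in the bound of Theorem~\ref{thmdiskretisierungsfehler_p_energy} carries an $h$-power of at least $k_p$, the limiting case being the term $h^{s_1-1}\|\psi^\ast\|_{H^{s_1}(\Gamma)}=h^{k_p}\|\psi^\ast\|_{H^{k_p+1}(\Gamma)}$ when $k=k_p$.

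Next I would convert the $\psi^\ast$-norms to $\bu^\ast$-norms using the inequality \eqref{NormConnection_U_Psi}, which gives $\|\psi^\ast\|_{H^{r+1}(\Gamma)}\lesssim \|\bu^\ast\|_{H^r(\Gamma)}$ for any admissible $r$. Applying this with $r=k_p$ and $r=k_p+2$ turns the leading contributions $h^{k_p}\|\psi^\ast\|_{H^{k_p+1}(\Gamma)}$ and $h^{s_2+k^\ast}\|\psi^\ast\|_{H^{s_2+2}(\Gamma)}$ into, respectively, $h^{k_p}\|\bu^\ast\|_{H^{k_p}(\Gamma)}\le h^{k_p}\|\bu^\ast\|_{H^{k_p+2}(\Gamma)}$ and $h^{s_2+k^\ast}\|\bu^\ast\|_{H^{s_2+1}(\Gamma)}\le h^{k_p+1}\|\bu^\ast\|_{H^{k_p+2}(\Gamma)}$. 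All remaining terms are of equal or higher order in $h$ and so are absorbed for $h$ bounded.

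There is no real obstacle; the argument is bookkeeping. The only mildly delicate point is to check that the worst (lowest $h$-power) case really does land at $h^{k_p}$ and not below: this occurs for $k=k_p$ through the $h^{s_1-1}$ term, and requires precisely the regularity $\psi^\ast\in H^{k_p+1}(\Gamma)$ which is guaranteed by $\bu^\ast\in H^{k_p}(\Gamma)$, and hence by the assumed $\bu^\ast\in H^{k_p+2}(\Gamma)$ via \eqref{NormConnection_U_Psi}. Collecting the bounds yields the asserted estimate.
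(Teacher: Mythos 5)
Your proposal is correct and coincides with what the paper intends: the corollary is stated without proof precisely because it is the substitution $m=k_p+3$, $s_1\in\{k_p+1,k_p+2\}$, $s_2=k_p+1$ into Theorem~\ref{thmdiskretisierungsfehler_p_energy}, followed by the conversion $\|\psi^\ast\|_{H^{r+1}(\Gamma)}\lesssim\|\bu^\ast\|_{H^r(\Gamma)}$ from \eqref{NormConnection_U_Psi}. Your case analysis in $k$ and identification of the worst term $h^{s_1-1}\|\psi^\ast\|_{H^{s_1}(\Gamma)}=h^{k_p}\|\psi^\ast\|_{H^{k_p+1}(\Gamma)}$ for $k=k_p$ is exactly the required bookkeeping.
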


\begin{remark} \label{commentassumption}\rm In the error analysis of this paper we used the assumption $\Gamma_h=\Gamma$.  Using the techniques available in the literature for scalar elliptic surface PDEs, an error analysis including the geometry approximation can be developed. Such an analysis, however, will be very technical. Based on the analysis in  \cite{GrandeLehrenfeldReusken} of the higher order parametric finite element method explained in Remark~\ref{RemGammah}, we expect optimal order results for the velocity approximation if for the geometry approximation we use the same polynomial degree $k_u$ (i.e., \emph{iso}parametric w.r.t the velocity finite element space).  Using this isoparametric approach and $k=k_u+1$, $k_g=k_u+1$, numerical experiments with the parametric finite element method of Remark~\ref{RemGammah} show that we indeed obtain optimal results $\|\psi^*-\psi^*_h\|_A \sim h^k$, $\|\bu - \bu_h\|_{H^1(\Gamma_h)} \sim h^{k_u}$, $\|\bu - \bu_h\|_{L^2(\Gamma_h)} \sim h^{k_u+1}$, but  a 
suboptimal error $\|\psi^*-\psi^*_h\|_{L^2(\Gamma_h)} \sim h^k$ (due to the geometrical error).
\end{remark}

\section{Numerical experiments} \label{SectExp}
For the implementation of the method we used Netgen/NGSolve with ngsxfem \cite{Netgen, ngsxfem}. We consider the unit sphere $ \Gamma \subset \Omega:=[-2,2]^3$:
\begin{align*}
\Gamma:=\{\mathbf{x} \in \Bbb{R}^3: \Phi(\bx):= \sqrt{x_1^2 + x_2^2 + x_3^2-1}=0 \}.
\end{align*}
 We choose the smooth solutions 
\begin{align*}
\bu&:=   \left( \begin {array}{c} {\frac {x_{{1}} \left( 6x_{{2}}-x_{{3}}
 \right) \sqrt {{x_{{1}}}^{2}+{x_{{2}}}^{2}+{x_{{3}}}^{2}}-x_{{2}}\cos
 \left( 6 \right)  \left( {x_{{2}}}^{2}-2\,{x_{{3}}}^{2} \right) }{
 \left( {x_{{1}}}^{2}+{x_{{2}}}^{2}+{x_{{3}}}^{2} \right) ^{3/2}}}
\\ \noalign{\medskip}-{\frac {-\cos \left( 6 \right) x_{{1}}{x_{{2}}}^
{2}+ \left( 6{x_{{1}}}^{2}-6{x_{{3}}}^{2}-x_{{2}}x_{{3}} \right) 
\sqrt {{x_{{1}}}^{2}+{x_{{2}}}^{2}+{x_{{3}}}^{2}}}{ \left( {x_{{1}}}^{
2}+{x_{{2}}}^{2}+{x_{{3}}}^{2} \right) ^{3/2}}}\\ \noalign{\medskip}-{
\frac { \left( 6x_{{2}}x_{{3}}-{x_{{1}}}^{2}+{x_{{2}}}^{2} \right) 
\sqrt {{x_{{1}}}^{2}+{x_{{2}}}^{2}+{x_{{3}}}^{2}}+2\,\cos \left( 6
 \right) x_{{1}}x_{{2}}x_{{3}}}{ \left( {x_{{1}}}^{2}+{x_{{2}}}^{2}+{x
_{{3}}}^{2} \right) ^{3/2}}}\end {array} \right) ,\\
p&:={\frac { \left( {6}^{3}x_{{2}}x_{{3}}+x_{{1}}\cos \left( {6}^{2}
 \right) \sqrt {{x_{{1}}}^{2}+{x_{{2}}}^{2}+{x_{{3}}}^{2}} \right) x_{
{1}}}{ \left( {x_{{1}}}^{2}+{x_{{2}}}^{2}+{x_{{3}}}^{2} \right) ^{3/2}
}},
\end{align*}
of the Stokes equation \eqref{strongform-1}-\eqref{strongform-2} with $\alpha=1$, and corresponding stream function and vorticity
\begin{align*}
\psi&:={\frac {x_{{1}} \left( 6x_{{3}}+x_{{2}} \right) \sqrt {{x_{{1}}}^{2}+{
x_{{2}}}^{2}+{x_{{3}}}^{2}}-{x_{{2}}}^{2}x_{{3}}\cos \left( 6 \right) 
}{ \left( {x_{{1}}}^{2}+{x_{{2}}}^{2}+{x_{{3}}}^{2} \right) ^{3/2}}}, \\
\phi&:=-6\,{\frac {x_{{1}} \left( 6x_{{3}}+x_{{2}} \right) \sqrt {{x_{{1}}}^{
2}+{x_{{2}}}^{2}+{x_{{3}}}^{2}}+1/3\,\cos \left( 6 \right) x_{{3}}
 \left( {x_{{1}}}^{2}-5\,{x_{{2}}}^{2}+{x_{{3}}}^{2} \right) }{
 \left( {x_{{1}}}^{2}+{x_{{2}}}^{2}+{x_{{3}}}^{2} \right) ^{3/2}}}.
\end{align*}
The velocity solution is tangential and divergence-free and  all solutions are extended constantly along normals. Using MAPLE we determine the corresponding right-hand side $\blf$ which is also constant in normal direction and defines  the data approximation $\mathbf{f}_h$. The velocity $\bu$ and the stream function $\psi$ are visualized in Figure \ref{Grafik_u_psi}.

\begin{figure}
	\centering
    \subfigure{\includegraphics[width=0.49\textwidth]{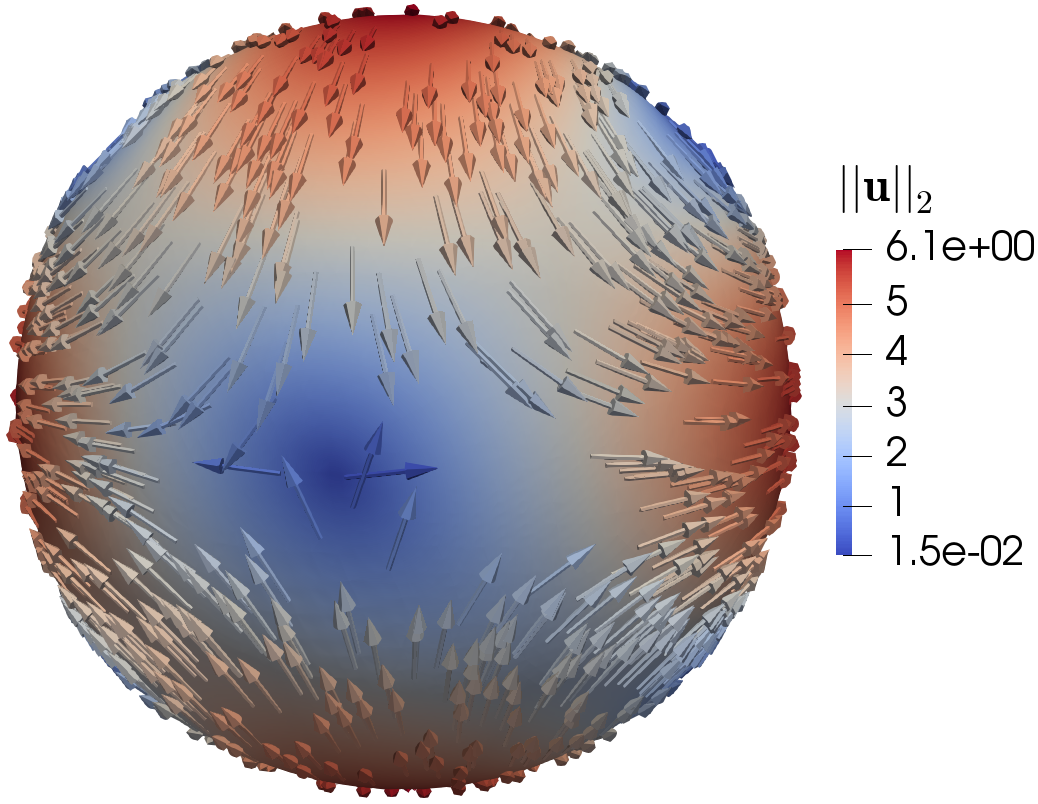}}
    \subfigure{\includegraphics[width=0.49\textwidth]{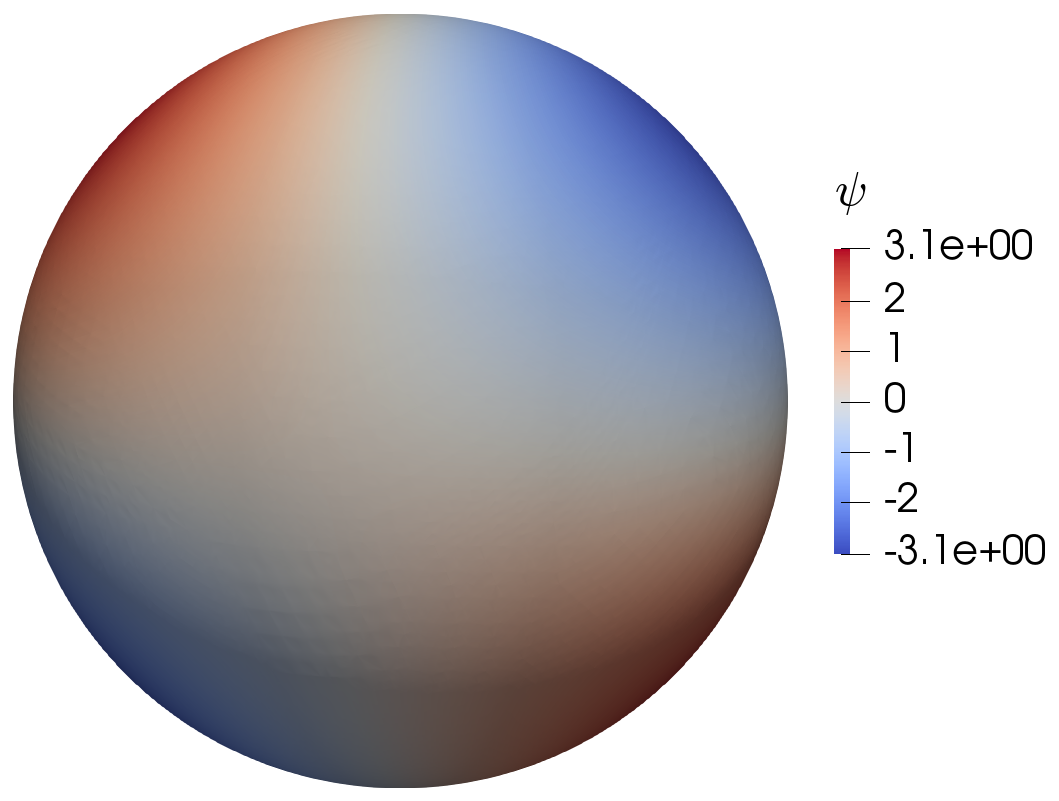}}
	\caption{The velocity $\bu$ (left) and the stream function $\psi$ (right).}
	\label{Grafik_u_psi}
\end{figure}

For the discretization we use an unstructured tetrahedral triangulation of $\Omega$ in Netgen with starting mesh size $h=0.6$. In every refinement step the mesh is locally refined using a marked-edge bisection method for the tetrahedra that are intersected by the surface \cite{Schoeberl}.
The piecewise planar surface approximation $\Gamma_h$ and the trace finite element spaces are constructed as explained in Remark~\ref{RemGammah}. The  normal on $\Gamma_h$ is defined by $\bn_h:=\frac{I^1_h \Phi}{\|I^1_h \Phi\|_2}$ and satisfies $\|\bn-\bn_h\|_\infty \lesssim h$. In the numerical experiments  a higher accuracy normal $\tilde \bn_h$ is used for the reconstruction of the velocity, unless stated otherwise. This normal is defined by $\tilde \bn_h:=\frac{I^2_h \Phi}{\|I^2_h \Phi\|_2}$ and satisfies $\|\bn-\tilde \bn_h \|_\infty \lesssim h^2$. We do not need a curvature approximation and set $K_h=K=1$. Based on the results of our analysis, cf. the discussion in  Remark \ref{rem_connection_up}, we define as the \emph{standard parameter choice}: $k=2$, $k_u=1$ and $k_p=1$. The parameters of the volume normal derivative stabilizations are set to $\rho=\rho_u=\rho_p=h$ and we use $k_g=2$, unless stated otherwise. Note that \emph{due to the geometry approximation ${\rm dist}(\Gamma_h,\Gamma) \lesssim h^2$ we can not 
expect any error to be better than $\lesssim h^2$}.

\subsection{Results of the method with standard parameter choice}
We present results for the standard method described above. In addition the case $k_p=2$ (with solution  denoted by $\tilde{p}_h$) is considered. All results are illustrated in the Figures \ref{Errors_psi_phi} -- \ref{Errors_u_p}. We observe an optimal second order convergence for the error $\|\psi_h-\psi\|_{A}$, consistent with the estimate \eqref{NotesAR_22_kgeq2}. The error $\|\psi_h-\psi\|_{L^2(\Gamma_h)}$ is not treated in the analysis but converges with second order. This suboptimality  can be explained by the geometric error $\sim h^2$. We  observe the same convergence orders for the vorticity error $\|\phi-\phi_h\|$ as for the stream function error $\|\psi-\psi_h\|$, both for $\|\cdot\|=\|\cdot\|_A$ and $\|\cdot\|=\|\cdot\|_{L^2(\Gamma_h)}$. For the velocity and the pressure \emph{optimal orders of convergence in all norms}, as predicted by the analysis (modulo geometric errors) are observed. For the error $\|\tilde p_h-p\|_{L^2(\Gamma_h)}$ we obtain suboptimal second order convergence (not shown), due to the geometry error $\sim h^2$.

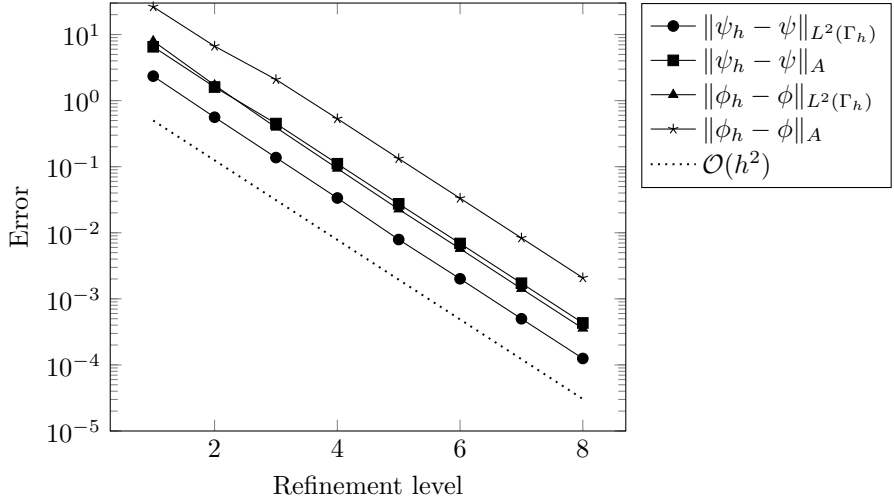
\begin{figure}
  \begin{tikzpicture}
  \def\vara{10.0}
  \def\varb{0.5}
  \begin{semilogyaxis}[ xlabel={Refinement level}, ylabel={Error}, ymin=1E-5, ymax=30, legend style={ cells={anchor=west}, legend pos=outer north east}, cycle list name=mark list ]
    \addplot table[x=level, y=1_errorL2] {Stat_Stokes_Modified_PsiPhi.dat};
    \addplot table[x=level, y=1_errorH1] {Stat_Stokes_Modified_PsiPhi.dat};
    \addplot table[x=level, y=2_errorL2] {Stat_Stokes_Modified_PsiPhi.dat};
    \addplot table[x=level, y=2_errorH1] {Stat_Stokes_Modified_PsiPhi.dat};
     \addplot[dotted,line width=0.75pt] coordinates { 
    (1,\varb) (2,\varb*0.5*0.5) (3,\varb*0.25*0.25) (4,\varb*0.125*0.125) (5,\varb*0.0625*0.0625) (6,\varb*0.03125*0.03125) (7,\varb*0.015625*0.015625) (8,\varb*0.0078125*0.0078125)
    };

    \legend{$\|\psi_h-\psi\|_{L^2(\Gamma_h)}$ , $\|\psi_h-\psi\|_{A}$ , $\|\phi_h-\phi\|_{L^2(\Gamma_h)}$ , $\|\phi_h-\phi\|_{A}$ , $\mathcal{O}(h^2)$}
  \end{semilogyaxis}
  \end{tikzpicture}
  \caption{Errors for the stream function and the vorticity; $k=2$.}
  \label{Errors_psi_phi}
\end{figure} 

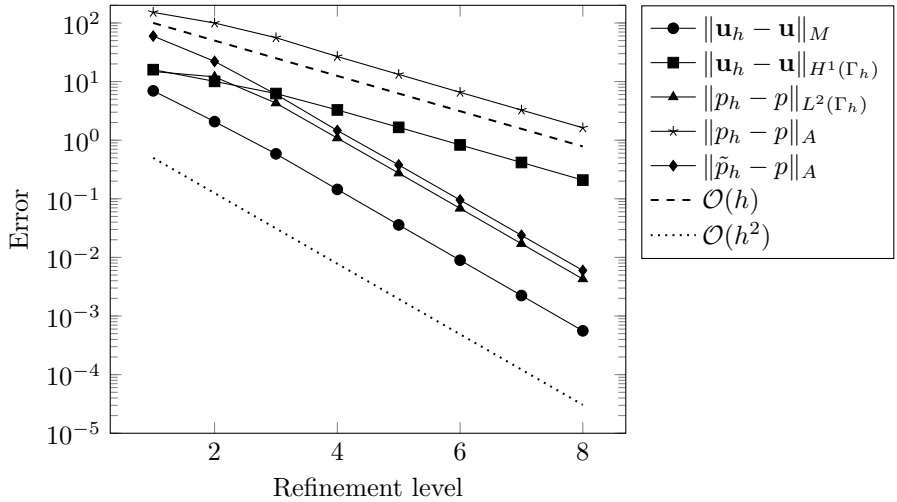
\begin{figure}
  \begin{tikzpicture}
  \def\vara{100.0}
  \def\varb{0.5}
  \begin{semilogyaxis}[ xlabel={Refinement level}, ylabel={Error}, ymin=1E-5, ymax=200, legend style={ cells={anchor=west}, legend pos=outer north east}, cycle list name=mark list ]
    \addplot table[x=level, y=1_errorL2] {Stat_Stokes_Modified_UP.dat};
    \addplot table[x=level, y=1_errorH1] {Stat_Stokes_Modified_UP.dat};
    \addplot table[x=level, y=2_errorL2] {Stat_Stokes_Modified_UP.dat};
    \addplot table[x=level, y=2_errorH1] {Stat_Stokes_Modified_UP.dat};
    \addplot table[x=level, y=3_errorH1] {Stat_Stokes_Modified_UP.dat};
    \addplot[dashed,line width=0.75pt] coordinates { 
    (1,\vara) (2,\vara*0.5) (3,\vara*0.25) (4,\vara*0.125) (5,\vara*0.0625) (6,\vara*0.03125) (7,\vara*0.015625) (8,\vara*0.0078125)
    };
     \addplot[dotted,line width=0.75pt] coordinates { 
    (1,\varb) (2,\varb*0.5*0.5) (3,\varb*0.25*0.25) (4,\varb*0.125*0.125) (5,\varb*0.0625*0.0625) (6,\varb*0.03125*0.03125) (7,\varb*0.015625*0.015625) (8,\varb*0.0078125*0.0078125)
    };

    \legend{$\|\bu_h-\bu\|_{M}$ , $\|\bu_h-\bu\|_{H^1(\Gamma_h)}$ , $\|p_h-p\|_{L^2(\Gamma_h)}$ , $\|p_h-p\|_{A}$, $\|\tilde{p}_h-p\|_{A}$ , $\mathcal{O}(h)$, $\mathcal{O}(h^2)$}
  \end{semilogyaxis}
  \end{tikzpicture}
  \caption{Errors for  velocity and pressure; $k=k_g=2$, $k_u=1$, $k_p=1$ ($p_h$) or $k_p=2$ ($\tilde p_h$).}
    \label{Errors_u_p}
\end{figure} 

\subsection{Variation of the parameter $\rho_u$} \label{sectvarrho}
In this experiment we illustrate the effect of different scalings for the parameter $\rho_u$ of the volume normal derivative stabilization. We consider $\rho_u \in \{0,h,1,h^{-1}\}$. Note that in the standard parameter setting described above we take $\rho_u=h$. Results for the velocity error in three different norms are shown  in the Figures \ref{Different_rho_M} -- \ref{Different_rho_H1}. As discussed in Remark \ref{rem_approxU}, our analysis predicts  error bounds $\|\bu_h-\bu\|_{M} \lesssim h^\frac{3}{2}$ and $\|\bu_h-\bu\|_{M} \lesssim h$ for the parameter values $\rho_u=1$ and $\rho_u=h^{-1}$, respectively. These convergence rates are observed in Figure~ \ref{Different_rho_M}. Only the choices $\rho_u\sim h$ and $\rho_u=0 $ (no stabilization) result in second order convergence $\| \bu_h-\bu \|_{M} \sim h^2$. Note that for $\rho_u=0$ the norm $\|\cdot\|_M$ coincides with the $\|\cdot\|_{L^2(\Gamma)}$ norm. The scaling $\rho_u \sim h^{-1}$ leads to suboptimal convergence of the error $\|\bu_h-\bu\|_{L^2(\Gamma_h)}$, cf. Figure~\ref{Different_rho_L2}.  The results in Figure~\ref{Different_rho_H1} show that without stabilization ($\rho_u=0$) the rate of convergence in the $\|\cdot\|_{H^1(\Gamma_h)}$-norm is suboptimal, cf. Remark \ref{rem_uH1}. Hence, in accordance with our findings based on the error analysis, these results of the numerical experiments lead to the (optimal) parameter scaling
 $\rho_u \sim h$. 

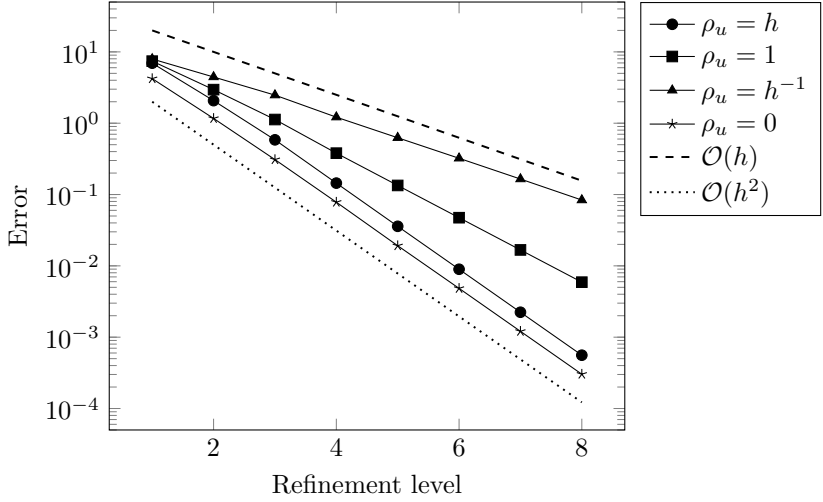
\begin{figure}
  \begin{tikzpicture}
  \def\vara{20.0}
  \def\varb{2.0}
  \begin{semilogyaxis}[ xlabel={Refinement level}, ylabel={Error}, ymin=5E-5, ymax=50, legend style={ cells={anchor=west}, legend pos=outer north east}, cycle list name=mark list ]
    \addplot table[x=level, y=rhoh_errorM] {Stat_Stokes_Modified_UrhoEnergy.dat};
    \addplot table[x=level, y=rho1_errorM] {Stat_Stokes_Modified_UrhoEnergy.dat};
    \addplot table[x=level, y=rho1dh_errorM] {Stat_Stokes_Modified_UrhoEnergy.dat};
    \addplot table[x=level, y=rho0_errorM] {Stat_Stokes_Modified_UrhoEnergy.dat};
    \addplot[dashed,line width=0.75pt] coordinates { 
    (1,\vara) (2,\vara*0.5) (3,\vara*0.25) (4,\vara*0.125) (5,\vara*0.0625) (6,\vara*0.03125) (7,\vara*0.015625) (8,\vara*0.0078125)
    };
     \addplot[dotted,line width=0.75pt] coordinates { 
    (1,\varb) (2,\varb*0.5*0.5) (3,\varb*0.25*0.25) (4,\varb*0.125*0.125) (5,\varb*0.0625*0.0625) (6,\varb*0.03125*0.03125) (7,\varb*0.015625*0.015625) (8,\varb*0.0078125*0.0078125)
    };
    \legend{$\rho_u=h$ , $\rho_u=1$ , $\rho_u=h^{-1}$ , $\rho_u=0$ , $\mathcal{O}(h)$, $\mathcal{O}(h^2)$}
  \end{semilogyaxis}
  \end{tikzpicture}
  \caption{Error $\|\bu_h-\bu\|_M$ for different  $\rho_u$ scalings; $k=k_g=2$, $k_u=1$.}
\label{Different_rho_M}
\end{figure} 

\begin{figure}
  \begin{tikzpicture}
  \def\vara{10.0}
  \def\varb{2.0}
  \begin{semilogyaxis}[ xlabel={Refinement level}, ylabel={Error}, ymin=5E-5, ymax=20, legend style={ cells={anchor=west}, legend pos=outer north east}, cycle list name=mark list ]
    \addplot table[x=level, y=rhoh_errorL2] {Stat_Stokes_Modified_UrhoL2.dat};
    \addplot table[x=level, y=rho1_errorL2] {Stat_Stokes_Modified_UrhoL2.dat};
    \addplot table[x=level, y=rho1dh_errorL2] {Stat_Stokes_Modified_UrhoL2.dat};
    \addplot table[x=level, y=rho0_errorL2] {Stat_Stokes_Modified_UrhoL2.dat};
    \addplot[dashed,line width=0.75pt] coordinates { 
    (1,\vara) (2,\vara*0.5) (3,\vara*0.25) (4,\vara*0.125) (5,\vara*0.0625) (6,\vara*0.03125) (7,\vara*0.015625) (8,\vara*0.0078125)
    };
     \addplot[dotted,line width=0.75pt] coordinates { 
    (1,\varb) (2,\varb*0.5*0.5) (3,\varb*0.25*0.25) (4,\varb*0.125*0.125) (5,\varb*0.0625*0.0625) (6,\varb*0.03125*0.03125) (7,\varb*0.015625*0.015625) (8,\varb*0.0078125*0.0078125)
    };

    \legend{$\rho_u=h$ , $\rho_u=1$ , $\rho_u=h^{-1}$ , $\rho_u=0$ , $\mathcal{O}(h)$, $\mathcal{O}(h^2)$}
  \end{semilogyaxis}
  \end{tikzpicture}
  \caption{Error $\|\bu_h-\bu\|_{L^2(\Gamma_h)}$ for different $\rho_u$ scalings; $k=k_g=2$, $k_u=1$.}
  \label{Different_rho_L2}
\end{figure} 

\begin{figure}
  \begin{tikzpicture}
  \def\vara{10.0}
  \def\varb{2.0}
  \begin{semilogyaxis}[ xlabel={Refinement level}, ylabel={Error}, ymin=5E-2, ymax=80, legend style={ cells={anchor=west}, legend pos=outer north east}, cycle list name=mark list ]
    \addplot table[x=level, y=rhoh_errorH1] {Stat_Stokes_Modified_UrhoH1.dat};
    \addplot table[x=level, y=rho1_errorH1] {Stat_Stokes_Modified_UrhoH1.dat};
    \addplot table[x=level, y=rho1dh_errorH1] {Stat_Stokes_Modified_UrhoH1.dat};
    \addplot table[x=level, y=rho0_errorH1] {Stat_Stokes_Modified_UrhoH1.dat};
    \addplot[dashed,line width=0.75pt] coordinates { 
    (1,\vara) (2,\vara*0.5) (3,\vara*0.25) (4,\vara*0.125) (5,\vara*0.0625) (6,\vara*0.03125) (7,\vara*0.015625) (8,\vara*0.0078125)
    };
    \legend{$\rho_u=h$ , $\rho_u=1$ , $\rho_u=h^{-1}$ , $\rho_u=0$ , $\mathcal{O}(h)$, $\mathcal{O}(h^2)$}
  \end{semilogyaxis}
  \end{tikzpicture}
  \caption{Error $\|\bu_h-\bu\|_{H^1(\Gamma_h)}$ for different $\rho_u$ scalings; $k=k_g=2$, $k_u=1$.}
  \label{Different_rho_H1}
\end{figure}
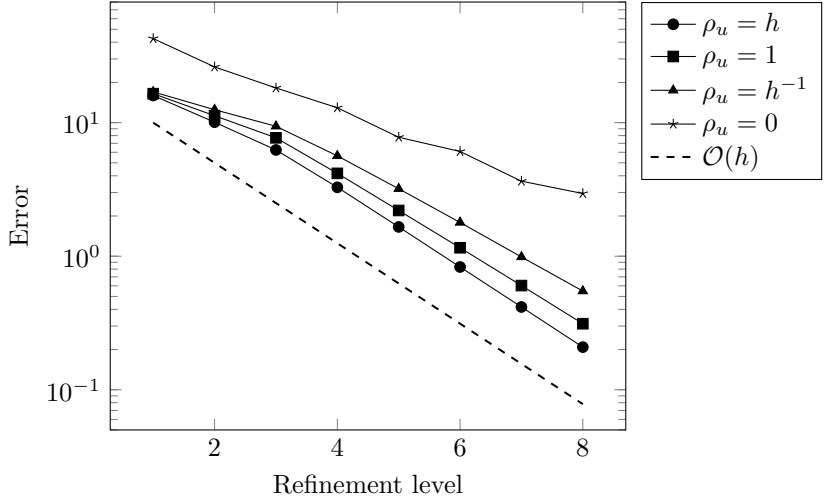 

\subsection{Choice of the normal in the reconstruction of the velocity}
In the reconstruction of the velocity \eqref{Variationsformulierung_u_diskret} we introduced an approximate normal $\tilde \bn_h$. The accuracy of this normal is described by the order parameter $k_g$, cf. Theorem~\ref{thmdiskretisierungsfehler_u_energy}. Our error analysis resulted in the parameter choice $k_g=k_u+1$, cf. Remark~\ref{rem_connection_up}. In the experiments above we used $k_u=1$, $k_g=2$. 
 We performed an experiment in which the normal $\tilde \bn_h$, that is used in the experiments above (with $k_g=2$), is replaced by the normal $\bn_h$ (with $k_g=1$). Results are shown in  Figure~\ref{Error_normals}. We observe that an optimal convergence order in both the $\| \cdot \|_M$-norm and the $\|\bu_h-\bu\|_{H^1(\Gamma_h)}$ is \emph{not}  obtained if we use $\bn_h$. We then lose one order of convergence in the $\| \cdot \|_M$-norm.

\begin{figure}
  \begin{tikzpicture}
  \def\vara{10.0}
  \def\varb{2.0}
  \begin{semilogyaxis}[ xlabel={Refinement level}, ylabel={Error}, ymin=5E-5, ymax=50, legend style={ cells={anchor=west}, legend pos=outer north east}, cycle list name=mark list ]
    \addplot table[x=level, y=1_errorL2] {Stat_Stokes_Modified_U_Normals.dat};
    \addplot table[x=level, y=1_errorH1] {Stat_Stokes_Modified_U_Normals.dat};
    \addplot table[x=level, y=2_errorL2] {Stat_Stokes_Modified_U_Normals.dat};
    \addplot table[x=level, y=2_errorH1] {Stat_Stokes_Modified_U_Normals.dat};
    \addplot[dashed,line width=0.75pt] coordinates { 
    (1,\vara) (2,\vara*0.5) (3,\vara*0.25) (4,\vara*0.125) (5,\vara*0.0625) (6,\vara*0.03125) (7,\vara*0.015625) (8,\vara*0.0078125)
    };

     \addplot[dotted,line width=0.75pt] coordinates { 
    (1,\varb) (2,\varb*0.5*0.5) (3,\varb*0.25*0.25) (4,\varb*0.125*0.125) (5,\varb*0.0625*0.0625) (6,\varb*0.03125*0.03125) (7,\varb*0.015625*0.015625) (8,\varb*0.0078125*0.0078125)
    };

    \legend{ \small $\|\bu_h-\bu\|_{M}$ with $\tilde \bn_h$ , \small $\|\bu_h-\bu\|_{H^1(\Gamma_h)}$ with $\tilde \bn_h$ , \small $\|\bu_h-\bu\|_{M}$ with $\bn_h$ , \small $\|\bu_h-\bu\|_{H^1(\Gamma_h)}$ with $\bn_h$  , \small $\mathcal{O}(h)$, \small $\mathcal{O}(h^2)$    }
  \end{semilogyaxis}
  \end{tikzpicture}
  \caption{Velocity errors for different normal approximations; $k=2$, $k_u=1$, $k_g=2$ ($\tilde \bn_h$) or $k_g=1$ ($\bn_h$).}
  \label{Error_normals}
\end{figure}
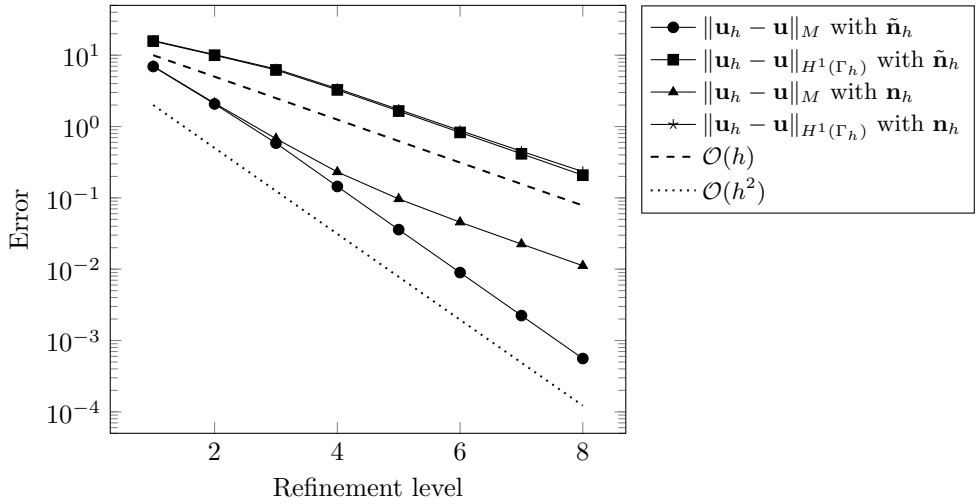

\null\newpage

\bibliographystyle{siam}
\bibliography{literatur}

\end{document}